\newtheorem{definition}{Definition}
\newtheorem{proposition}{Proposition}
\newtheorem{lemma}{Lemma}
\newtheorem{theorem}{Theorem}
\newtheorem{remark}{Remark}
\newtheorem{example}{Example}
\newtheorem{claim}{Claim}
\newcommand{\ONE}{\mathbf{1}}
\newcommand{\ZERO}{\mathbf{0}}
\newcommand{\BF}[1]{{\bf\boldmath #1\unboldmath}}
\newcommand{\B}{\{0,1\}}
\newcommand{\wH}{w_\mathrm{H}}
\title{Fixing monotone Boolean networks asynchronously}
\author{
Julio Aracena\footnote{CI2MA and Departamento de Ingenier\'ia Matem\'atica, Universidad de Concepci\'on, Chile},
Maximilien Gadouleau\footnote{Department of Computer Science, Durham University, UK},
Adrien Richard\footnote{Laboratoire I3S, UMR CNRS 7271 \& Universit\'e C\^ote d'Azur, France},
Lilian Salinas\footnote{Department of Computer Sciences and CI2MA, University of Concepci\'on, Chile}
}
\date{January 30, 2018; revised August 30, 2019}
\begin{document}

\maketitle

\begin{abstract}
The asynchronous automaton associated with a Boolean network $f:\B^n\to\B^n$ is considered in many applications. It is the finite deterministic automaton with set of states $\B^n$, alphabet $\{1,\dots,n\}$, where the action of letter $i$ on a state $x$ consists in either switching the $i$th component if $f_i(x)\neq x_i$ or doing nothing otherwise. This action is extended to words in the natural way. We then say that a word $w$ {\em fixes} $f$ if, for all states $x$, the result of the action of $w$ on $x$ is a fixed point of $f$. In this paper, we ask for the existence of fixing words, and their minimal length. Firstly, our main results concern the minimal length of words that fix {\em monotone} networks.  We prove that, for $n$ sufficiently large, there exists a monotone network $f$ with $n$ components such that any word fixing $f$ has length $\Omega(n^2)$. For this first result we prove, using Baranyai's theorem, a property about shortest supersequences that could be of independent interest: there exists a set of permutations of $\{1,\dots,n\}$ of size $2^{o(n)}$ such that any sequence containing all these permutations as subsequences is of length $\Omega(n^2)$. Conversely, we construct a word of length $O(n^3)$ that fixes all monotone networks with $n$ components. Secondly, we refine and extend our results to different classes of fixable networks, including networks with an acyclic interaction graph, increasing networks, conjunctive networks, monotone networks whose interaction graphs are contained in a given graph, and balanced networks. 
\end{abstract}

\section{Introduction}

A \BF{Boolean network} (\BF{network} for short) is a finite dynamical system usually defined by a function  
\[
f:\B^n\to\B^n,\qquad x=(x_1,\dots,x_n)\mapsto f(x)=(f_1(x),\dots,f_n(x)).
\]

Boolean networks have many applications. In particular, since the seminal papers of McCulloch and Pitts \cite{MP43}, Hopfield \cite{H82}, Kauffman \cite{K69,K93} and Thomas \cite{T73,TA90}, they are omnipresent in the modeling of neural and gene networks (see \cite{B08,N15} for reviews). They are also essential tools in computer science, for the network coding problem in information theory \cite{ANLY00,GRF16} and memoryless computation \cite{BGT14, CFG14a, GR15b}. 

The ``network'' terminology comes from the fact that the \BF{interaction graph} of $f$ is often considered as the main parameter of $f$: it is the directed graph  with vertex set $[n]:=\{1,\dots,n\}$ and an edge from $j$ to $i$ if $f_i$ depends on $x_j$, that is, if there exist $x,y\in\B^n$ that only differ in the component $j$ such that $f_i(x)\neq f_i(y)$. An illustration is given in Figure~\ref{fig:f}(a-b).

From a dynamical point of view, the successive iterations of $f$ describe the so called \textit{synchronous dynamics}: if $x^t$ is the state of the system at time $t$, then $x^{t+1}=f(x^t)$ is the state of the system at the next time. Hence, all components are updated in parallel at each time step. However, when Boolean networks are used as models of natural systems, such as gene networks, synchronicity can be an issue. This led researchers to consider the \textit{(fully) asynchronous dynamics}, where one component is updated at each time step (see e.g. \cite{T91,TA90,TK01,A-J16} for Boolean networks, and \cite{F13,DFM12,DFMM13} for the closely related model of cellular automata). In our setting, given an infinite sequence $i_1, i_2 \dots$ of elements in $[n]$, called \textit{updating strategy}, and an initial state $x^0$, the resulting asynchronous dynamics is given by the recurrence $x^{t+1}=f^{i_{t+1}}(x^t)$, where, for each component $i\in [n]$ and state $x\in\B^n$,
\[
f^i(x):=(x_1,\dots,f_i(x),\dots,x_n). 
\] 

Functions $f^1,\dots,f^n$ define, in a natural way, a deterministic finite automaton called \BF{asynchronous automaton} of $f$: the set of states is $\B^n$, the alphabet is $[n]$ and $f^i(x)$ is the result of the action of a letter $i$ on a state $x$; see Figure~\ref{fig:f}(c) for an illustration. This action is extended to words on the alphabet $[n]$ in the natural way: the result of the action of a word $w=i_1i_2\dots i_k$ on a state $x$ is inductively defined by $f^w(x):=f^{i_2\dots i_k}(f^{i_1}(x))$ or, equivalently,  $f^w(x):=(f^{i_k}\circ f^{i_{k-1}}\circ\dots\circ f^{i_1})$. Hence, given an updating strategy  $i_1, i_2 \dots$ and an initial state $x^0$, the state of the system at time $t$ in the resulting asynchronous dynamics is $x^t=f^{i_1i_2\dots i_t}(x^0)$. 

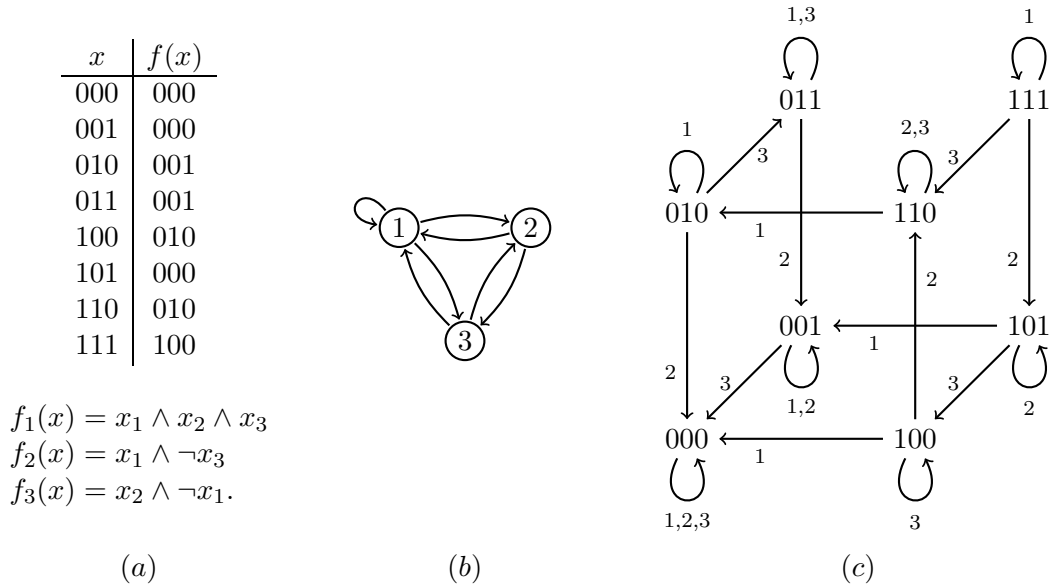
\begin{figure}
\[
\begin{array}{ccc}
\begin{array}{c}
\begin{array}{c|c}
	x & f(x)\\\hline
	000 & 000\\
	001 & 000\\
	010 & 001\\
	011 & 001\\
	100 & 010\\
	101 & 000\\
	110 & 010\\
	111 & 100\\
\end{array}
\\~\\
\begin{array}{l}
	f_1(x)= x_1 \land x_2 \land x_3\\
	f_2(x)= x_1 \land \neg x_3\\
	f_3(x)= x_2 \land \neg x_1.
\end{array}
\end{array}
&
\begin{array}{c}
\begin{tikzpicture}
\node[outer sep=1,inner sep=2,circle,draw,thick] (1) at (150:1){$1$};
\node[outer sep=1,inner sep=2,circle,draw,thick] (2) at (30:1){$2$};
\node[outer sep=1,inner sep=2,circle,draw,thick] (3) at (270:1){$3$};
\draw[->,thick] (1.128) .. controls (140:1.8) and (160:1.8) .. (1.172);
\draw[->,thick,white] (2.8) .. controls (20:1.8) and (40:1.8) .. (2.52);
\path[->,thick]
(1) edge[bend left=15] (2)
(2) edge[bend left=15] (1)
(2) edge[bend left=15] (3)
(3) edge[bend left=15] (2)
(3) edge[bend left=15] (1)
(1) edge[bend left=15] (3)
;
\end{tikzpicture}
\end{array}
&
\begin{array}{c}
\begin{tikzpicture}
\def\d{2}
\node (000) at (0,0){000};
\node at (0,-1.1){\scriptsize 1,2,3};
\node (001) at (1.5,1.5){001};
\node at (1.5,0.4){\scriptsize 1,2};
\node (010) at (0,3){010};
\node at (0,4.1){\scriptsize 1};
\node (011) at (1.5,4.5){011};
\node at (1.5,5.6){\scriptsize 1,3};
\node (100) at (3,0){100};
\node at (3,-1.1){\scriptsize 3};
\node (101) at (4.5,1.5){101};
\node at (4.5,0.4){\scriptsize 2};
\node (110) at (3,3){110};
\node at (3,4.1){\scriptsize 2,3};
\node (111) at (4.5,4.5){111};
\node at (4.5,5.6){\scriptsize 1};
\draw[->,thick] (000) .. controls ++(-0.5,-1) and ++(0.5,-1) .. (000);
\draw[->,thick] (001) .. controls ++(-0.5,-1) and ++(0.5,-1) .. (001);
\draw[->,thick] (100) .. controls ++(-0.5,-1) and ++(0.5,-1) .. (100);
\draw[->,thick] (101) .. controls ++(-0.5,-1) and ++(0.5,-1) .. (101);
\draw[->,thick] (010) .. controls ++(0.5,1) and ++(-0.5,1) .. (010);
\draw[->,thick] (011) .. controls ++(0.5,1) and ++(-0.5,1) .. (011);
\draw[->,thick] (110) .. controls ++(0.5,1) and ++(-0.5,1) .. (110);
\draw[->,thick]  (111) .. controls ++(0.5,1) and ++(-0.5,1) .. (111);
\path[thick,->,draw,black]
(001) edge node[near end, above]{\scriptsize 3} (000)
(010) edge node[near end, left]{\scriptsize 2} (000)
(010) edge node[near end, below]{\scriptsize 3} (011)
(011) edge node[near end, left]{\scriptsize 2} (001)
(100) edge node[near end, below]{\scriptsize 1} (000)
(100) edge node[near end, right]{\scriptsize 2} (110)
(101) edge node[near end, below]{\scriptsize 1} (001)
(101) edge node[near end, above]{\scriptsize 3} (100)
(110) edge node[near end, below]{\scriptsize 1} (010)
(111) edge node[near end, above]{\scriptsize 3} (110)
(111) edge node[near end, left]{\scriptsize 2} (101)
;
\end{tikzpicture}
\end{array}
\\
(a)&(b)&(c)
\end{array}
\]
\caption{\label{fig:f} (a) A network $f$ given under two different forms (a table and logical formulas). (b) The interaction graph of $f$. (c) The asynchronous automaton of $f$.}
\end{figure}

In this paper, we introduce and study the following concepts.

\begin{definition}
A \BF{fixed point} of $f$ is any state $x$ such that $f(x) = x$. A word $w$ over the alphabet $[n]$ \BF{fixes} $f$ if $f^w(x)$ is a fixed point of $f$ for every state $x$. If $w$ fixes $f$ then $w$ is a \BF{fixing word} for $f$. If $f$ admits a fixing word then $f$ is \BF{fixable}. If $f$ is fixable then the \BF{fixing length} of $f$, denoted $\lambda(f)$, is the length of a shortest word fixing $f$. 
\end{definition}

Hence, $w$ fixes $f$ if $w$ sends any state to a fixed point of $f$. This corresponds to the situation where the start state of the asynchronous automaton of $f$ is undetermined, and the accepting states are exactly the fixed points of $f$. As such, there is an obvious connection between fixing word and \textit{synchronizing words}: if $f$ has a unique fixed point, then $w$ is a synchronizing word for the asynchronous automaton of $f$ if and only if $w$ fixes $f$. For instance, the network in Figure~\ref{fig:f} is fixed by $w = 1231$. Hence it is fixable, and since it has a unique fixed point, its asynchronous automaton is synchronizing. 

It is easy to check that $f$ is fixable if and only if, for every state $x$, there is a word $w$ such that $f^w(x)$ is a fixed point. We may then think that fixability is a rather strong property. However it is not. Indeed, Bollob\'as, Gotsman and Shamir \cite{BGS93} showed that, considering the uniform distribution on the set of $n$-component networks, the probability for $f$ to be fixable when $f$ has at least one fixed point tends to $1$ as $n\to\infty$. Thus almost all networks with a fixed point are fixable. In turn, this shows that, for $n$ large, a positive fraction of all $n$-component networks are fixable. 

\begin{theorem}[Bollob\'as, Gotsman and Shamir \cite{BGS93}]\label{thm:huge}
Let $\phi(n)$ be the number of fixable $n$-component networks. We have 
\[
\lim_{n\to \infty} \frac{\phi(n)}{2^{n2^n}}=1-\frac{1}{e}.
\]
\end{theorem}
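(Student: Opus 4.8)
The plan is to separate the two phenomena hidden in the statement: a network \emph{having} a fixed point, and being able to \emph{reach} one from every state. Since a fixing word sends every state to a fixed point, a fixable network has at least one fixed point; hence $\{f\text{ fixable}\}\subseteq\{f\text{ has a fixed point}\}$, and
\[
\Pr(f\text{ fixable})=\Pr(f\text{ has a fixed point})-\Pr(f\text{ has a fixed point and is not fixable}).
\]
Drawing $f$ uniformly (so that $f(x)$ is uniform on $\B^n$, independently over $x\in\B^n$), it suffices to prove (A) $\Pr(f\text{ has a fixed point})\to 1-1/e$, and (B) $\Pr(f\text{ has a fixed point and is not fixable})\to 0$.

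Part (A) is routine. Writing $N=\sum_{x\in\B^n}\ONE[f(x)=x]$, the indicators are independent with $\Pr(f(x)=x)=2^{-n}$, so $N\sim\mathrm{Bin}(2^n,2^{-n})$ with $\mathbb{E}N=1$; by the method of moments (or a Chen--Stein bound on the total variation distance) $N$ converges to $\mathrm{Poisson}(1)$, whence $\Pr(N=0)\to e^{-1}$ and $\Pr(N\ge 1)\to 1-e^{-1}$.

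Part (B) is the crux. First I would reformulate non-fixability combinatorially. Consider the digraph on $\B^n$ with an arc from $x$ to the state obtained by switching component $i$ whenever $f_i(x)\neq x_i$; its sinks are exactly the fixed points, and by the characterisation noted above $f$ is fixable iff every vertex can reach a sink, i.e. iff there is \emph{no} nonempty \emph{trap}: a set $S\subseteq\B^n$ closed under the dynamics (no arc leaves $S$) and containing no fixed point. If $f$ has a fixed point $z$ but is not fixable, it has such a trap $S$, and necessarily $z\notin S$, so I would bound $\Pr(\mathrm{B})$ by the first moment of the number of pairs $(S,z)$ with $S$ a trap and $z\notin S$ a fixed point. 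For a set $S$, let $k_S(x)$ be the number of coordinates $i$ such that switching component $i$ of $x$ leaves $S$, and let $|\partial S|=\sum_{x\in S}k_S(x)$ be its hypercube edge boundary. Since closedness forces $f_i(x)=x_i$ on the $k_S(x)$ boundary coordinates of each $x\in S$, fixed-point-freeness forbids $f(x)=x$, and the values $f(x)$ are independent,
\[
\Pr(S\text{ is a trap})=\prod_{x\in S}2^{-k_S(x)}\bigl(1-2^{-(n-k_S(x))}\bigr)\le 2^{-|\partial S|},
\]
and, using the independence of $f(z)$ for $z\notin S$, the contribution of $S$ is $\Pr(S\text{ trap})\cdot(2^n-|S|)2^{-n}$.

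It then remains to show $\sum_{S}\Pr(S\text{ trap})\,(2^n-|S|)2^{-n}\to 0$, which I would do by splitting according to $|S|$. For \emph{small} $S$ the boundary $|\partial S|$ is of order $n|S|$, so $2^{-|\partial S|}$ beats the number $\binom{2^n}{|S|}$ of candidate sets and these terms sum to $o(1)$; the two-point traps, requiring adjacent $x,x'$ with $f(x)=x'$ and $f(x')=x$, already illustrate the mechanism and contribute $\Theta(n2^{-n})$. For \emph{large} $S$, with complement of size $t=2^n-|S|$, the coexistence factor $(2^n-|S|)2^{-n}=t2^{-n}$ is tiny; this is precisely what annihilates the dominant term $S=\B^n$ of the no-fixed-point regime, which contributes $0$ because it has no external vertex to host a fixed point. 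The genuine obstacle is the \emph{intermediate} regime $|S|=\Theta(2^n)$: there $2^{-|\partial S|}$ is as large as $2^{-\Theta(2^{n})}$ only for atypically clustered sets (subcubes), whereas almost all sets of a given density have edge boundary near the maximum, so controlling $\sum_{|S|=s}2^{-|\partial S|}$ requires the edge-isoperimetric inequality on the hypercube together with the factor $\prod_x\bigl(1-2^{-(n-k_S(x))}\bigr)$, which vanishes unless every vertex of $S$ keeps several neighbours inside $S$ and so excludes the thin, spread-out sets that would inflate a naive bound. Making this intermediate estimate rigorous --- ideally by restricting the first moment to \emph{minimal} traps (the cyclic bottom strongly connected components), to avoid the massive overcounting inherent in summing over all closed sets --- is the main difficulty of the proof.
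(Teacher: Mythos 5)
First, a point of comparison that matters here: the paper does not prove this statement at all --- it is quoted as an external result of Bollob\'as, Gotsman and Shamir \cite{BGS93} --- so your attempt has to stand on its own as a proof of a genuinely hard theorem. Your Part (A) is correct and routine: the number of fixed points is $\mathrm{Bin}(2^n,2^{-n})$, which converges to Poisson$(1)$, giving $\Pr(\exists\,\text{fixed point})\to 1-1/e$. Your reduction of non-fixability to the existence of a nonempty \emph{trap} is also correct (the set of states that cannot reach a fixed point is closed under the dynamics and fixed-point-free, and conversely), your formula $\Pr(S\text{ trap})=\prod_{x\in S}2^{-k_S(x)}\bigl(1-2^{-(n-k_S(x))}\bigr)$ is exact, and the first moment over pairs $(S,z)$ is a valid upper bound. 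Incidentally, this sets the problem in exactly the framework of \cite{BGS93}: the asynchronous graph of a uniform random network is a uniform random subgraph of the directed $n$-cube, with each arc present independently with probability $1/2$.

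The genuine gap is that Part (B) --- which is the entire content of the theorem beyond the Poisson computation --- remains a program rather than a proof, as you yourself concede, and the regime-by-regime estimates you do offer fail as literally stated. For small $S$ you claim $2^{-|\partial S|}$ ``beats'' $\binom{2^n}{|S|}$; but pairing the isoperimetric minimum $|\partial S|\ge s(n-\log_2 s)$ with the count $\binom{2^n}{s}\approx 2^{ns}/s!$ gives $\approx s^s/s!\approx e^s$, which \emph{grows} with $s$. The sum over small $S$ is in fact controlled by a different mechanism: typical sets are scattered and have boundary close to $ns$, and scattered sets are annihilated by the $\bigl(1-2^{-(n-k_S(x))}\bigr)$ factors (every vertex of a trap must keep a neighbour inside $S$); making that trade-off rigorous already requires counting sets by their number of internal edges, e.g.\ via $\sum_S 4^{e(S)}$ and subgraph enumeration, which you do not do. Symmetrically, in the large-$S$ regime, bounding every set by the complement's minimum boundary $t(n-\log_2 t)$ yields terms of order $t\,e^t 2^{-n}$, which diverges once $t$ is of order $n$ (since $e>2$), so the ``coexistence factor $t2^{-n}$ is tiny'' argument is not sufficient either. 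Finally, for the intermediate regime $|S|=\Theta(2^n)$ you correctly diagnose that summing over all closed sets overcounts catastrophically and propose restricting to minimal traps (bottom strongly connected components), but you neither formulate nor prove the needed estimate; the strong-connectivity constraint, the stability form of the edge-isoperimetric inequality, and the container-type counting this requires constitute precisely the technical heart of \cite{BGS93}. So the proposal identifies the right reformulation and the right obstacles, but it does not prove the theorem.
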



Thus the family of fixable networks is huge, and it makes sense to study it. In this paper, we focus on the fixing length of fixable networks, introduced above. For instance, we have seen that $w=1231$ fixes the network in Figure~\ref{fig:f}, thus this network has fixing length at most $4$, and it is easy to see that no word of length three fixes this network. Thus it has fixing length exactly~$4$. It is easy to construct a fixable $n$-component network $f$ such that $\lambda(f)$ is exponential in $n$, as follows. Let $x^1, \dots, x^{2^n}$ be a Gray code ordering of $\B^n$, i.e. $x^k$ and $x^{k+1}$ only differ by one component, say $i_k$, for $1\leq k<2^n$. Then let $f(x^k):= x^{k+1}$ for $1\leq k<2^n$ and $f(x^{2^n}) := x^{2^n}$. It is clear that $i_1,\dots,i_{2^n-1}$ is the unique shortest word fixing $f$, and thus $\lambda(f) = 2^n -1$.

We are then interested in networks $f$ which can be fixed in polynomial time, i.e. $\lambda(f)$ is bounded by a polynomial in $n$. More strongly, we extend our concepts to entire families $\mathcal{F}$ of $n$-component networks. We say that $\mathcal{F}$ is fixable if there is a word $w$ such that $w$ fixes $f$ for all $f\in\mathcal{F}$, which is clearly equivalent to: all the members of $\mathcal{F}$ are fixable. The fixing length $\lambda(\mathcal{F})$ is defined naturally as the length of a shortest word fixing $\mathcal{F}$. We are then interested in families $\mathcal{F}$ which can be fixed in polynomial time. We will identify several such families, and for each, we will derive an upper bound on $\lambda(\mathcal{F})$ and a lower bound on the maximum $\lambda(f)$ for $f \in \mathcal{F}$. Up to our knowledge, the only result of this kind was given in \cite{FGW83}, where it is shown that the word $12\dots n$ repeated $n(3n-1)$ times fixes any $n$-component symmetric threshold network with weights in $\{-1,0,1\}$. This family of threshold networks has thus a cubic fixing length. We could also mention somewhat less connected works concerning the minimal, maximal and average convergence time toward fixed points in the asynchronous setting for some specific fixable networks  \cite{A85,MRRS13,MRRS16,FTMS06,F13}. 

Our main results concern the family of monotone networks. We say that $f$ is \BF{monotone} if $x\leq y$ implies $f(x)\leq f(y)$ for all states $x,y$, where $x\leq y$ means $x_i\leq y_i$ for all $i\in [n]$. The fact that monotone networks are fixable is not obvious and proved in \cite{MRRS13}. Our first main result shows that some monotone networks have a quadratic fixing length. 

\begin{theorem}\label{thm:main1}
For every positive integer $n$, there exists an $n$-component monotone network with fixing length $\Omega(n^2)$. 
\end{theorem}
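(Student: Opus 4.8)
The plan is to construct an explicit monotone network whose unique fixed point is reached only after a quadratic number of updates, and moreover to arrange the dynamics so that \emph{every} updating strategy is forced to be long, not just one unlucky one.
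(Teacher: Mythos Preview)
What you have written is not a proof but a restatement of the goal. Saying ``construct a monotone network so that every updating strategy is forced to be long'' is precisely the content of the theorem; the entire difficulty lies in \emph{how} to arrange this, and you have given no construction, no key lemma, and no mechanism that would force quadratic length.

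Two more specific concerns. First, the phrase ``unique fixed point'' suggests you are heading toward a synchronizing-word argument, but there is no reason to expect the extremal example to have a unique fixed point, and indeed the paper's construction does not: it has many fixed points, one for each setting of a block of control coordinates. Second, even granting that you want a single network that is hard for every word, you need a \emph{lower-bound tool}. The paper's route is to reduce to a supersequence problem: it shows (via Baranyai's theorem) that there is a set of only $2^{o(n)}$ permutations of $[m]$ such that any word containing all of them as subsequences has length $\Omega(m^2)$. It then packs the corresponding path networks into a single monotone network by using $r=o(n)$ extra ``control'' coordinates to select among them, with the all-ones/all-zeros behaviour on high/low Hamming weight ensuring monotonicity. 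The subexponential bound on the number of permutations is essential: with all $m!$ permutations one would need $r\sim m\log m$ control bits and only get $\Omega((n/\log n)^2)$.

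Your proposal contains none of these ingredients, nor any alternative mechanism. As written, it is a plan without a construction.
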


For the proof, we establish, using Baranyai's theorem, the following property about shortest supersequences that could be of independent interest (permutations of $[n]$ are regarded as ordered arrangements, and any sequence obtained by deleting some elements in a sequence $s$ is a {\em subsequence} of $s$).

\begin{theorem}\label{thm:main2}
For every positive integer $n$, there exists a set of permutations of $[n]$ of size $2^{o(n)}$ such that any sequence containing all these permutations as subsequences is of length $\Omega(n^2)$.
\end{theorem}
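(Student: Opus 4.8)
The plan is to work with the set $\mathrm{Emb}(s)$ of all permutations of $[n]$ that occur as subsequences of a given sequence $s$, and to show that \emph{short} sequences are subsequence-poor. The engine is an elementary counting bound: every permutation in $\mathrm{Emb}(s)$ is read off from some choice of $n$ of the $|s|$ positions of $s$, and a fixed set of $n$ positions reads off a single sequence, so $|\mathrm{Emb}(s)| \le \binom{|s|}{n}$. Setting $L_0 := \lfloor n^2/(2e^2)\rfloor$ and using $\binom{L_0}{n} \le (eL_0/n)^n \le (n/2e)^n$ together with $n! \ge (n/e)^n$, I would conclude that every sequence $s$ with $|s| \le L_0$ satisfies $|\mathrm{Emb}(s)| \le 2^{-n}\, n!$; that is, such a short sequence misses all but an exponentially small fraction of the $n!$ permutations.

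With this in hand I would select the hard family $P$ probabilistically. Let $P$ consist of $t$ independent uniformly random permutations. For a \emph{fixed} sequence $s$ with $|s|\le L_0$, the probability that all of $P$ embeds into $s$ is $(|\mathrm{Emb}(s)|/n!)^t \le 2^{-nt}$. Since there are at most $2n^{L_0}$ sequences of length at most $L_0$ over the alphabet $[n]$, a union bound gives
\[
\Pr\bigl[\,\exists\, s,\ |s|\le L_0,\ \text{$s$ contains all of $P$}\,\bigr] \le 2 n^{L_0}\, 2^{-nt} = 2\cdot 2^{\,L_0\log_2 n - nt}.
\]
Choosing $t = \lceil (L_0 \log_2 n)/n\rceil + 1 = O(n\log n)$ drives this probability below $1$, so a family $P$ escaping every short sequence exists. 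This $P$ has size $O(n\log n)$, which is in particular $2^{o(n)}$, and by construction no sequence of length $\le L_0$ contains all its members; hence every common supersequence of $P$ has length at least $L_0+1 = \Omega(n^2)$, as required.

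The unsatisfying feature of this plan is that it is non-constructive: it yields a hard family but no explicit one, whereas an explicit family is what would feed the construction of the explicit monotone network of Theorem~\ref{thm:main1}. The natural way to remove the randomness is to replace the random $P$ by a highly symmetric, structured family, and this is where Baranyai's theorem enters: its partition of the $k$-subsets of $[n]$ into perfect matchings (partitions of $[n]$ into equal blocks) supplies a small, perfectly balanced collection of set-partitions from which to build permutations that are spread as evenly as possible across the sets $\mathrm{Emb}(s)$. I expect the crux to lie here, on the explicit route: the global counting estimate shows short sequences are subsequence-poor, but it does \emph{not} by itself certify that one \emph{particular} family escapes every short sequence. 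So for the Baranyai family one must prove the lower bound directly, exploiting its balancedness to argue that no short sequence can absorb all of it — this structural lower bound, rather than the construction, is the main obstacle.
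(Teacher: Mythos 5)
Your probabilistic argument is correct and complete, and it is genuinely different from the paper's proof. Your engine is the injection from permutations in $\mathrm{Emb}(s)$ to $n$-subsets of positions, giving $|\mathrm{Emb}(s)|\le\binom{|s|}{n}\le 2^{-n}n!$ for $|s|\le L_0=\lfloor n^2/(2e^2)\rfloor$, followed by a union bound over the at most $2n^{L_0}$ sequences of length at most $L_0$ over $[n]$ (restricting to the alphabet $[n]$ is harmless, since deleting other letters preserves containment of the permutations); with $t=O(n\log n)$ i.i.d.\ uniform permutations the failure probability is below $1$, and since the theorem only asserts existence, nothing more is needed. The paper instead builds an \emph{explicit} family of $a!\binom{n}{a}\le n^a$ permutations with $a=\lfloor n^{1/2+\varepsilon}\rfloor$, obtained by cyclically shifting the blocks of the Baranyai partitions of the $a$-subsets of $[n]$ and ordering each block in all $a!$ ways; its lower bound is a pigeonhole on ``profiles'': any common supersequence $w$ is cut into $b$ windows per permutation, many permutations share the same profile, and each shared window must then contain many \emph{distinct} length-$a$ subsequences, so the same binomial counting you use globally, applied per window, forces each window to have length about $a(n-a)/e$ and hence $|w|\ge(\frac{1}{e}-\varepsilon)n^2$. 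Comparing the two: your route is more elementary and yields a much smaller family ($O(n\log n)$ permutations versus $n^{n^{1/2+\varepsilon}}$, both comfortably $2^{o(n)}$), at the price of non-constructiveness and a worse constant ($1/(2e^2)$, and at best anything below $1/e^2$ by this method, versus $1/e-\varepsilon$). Your closing worry is, however, misplaced: Theorem~\ref{thm:main1} is itself an existence statement, and its proof only needs \emph{some} family of $2^{o(n)}$ permutations requiring quadratic common supersequences --- indeed the smaller the family, the fewer control components are needed to index it (with your family, $r=O(\log n)$ suffices) --- so your random family plugs into that construction verbatim, and no direct structural lower bound for the Baranyai family is needed on your route.
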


Theorem~\ref{thm:main1} trivially shows that the fixing length of the family of $n$-component monotone networks is at least quadratic, but we have not been able to obtain a super-quadratic lower-bound for this fixing length. Our second main result is that, conversely, the fixing length of the family of $n$-component monotone networks is at most cubic.

\begin{theorem}\label{thm:main3}
For every positive integer $n$, there is a word of length $O(n^3)$ that fixes every $n$-component monotone network. 
\end{theorem}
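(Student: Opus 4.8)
The plan is to build the fixing word by concatenating at most $n$ \emph{phases}, each of length $O(n^2)$, so that each phase permanently settles at least one more coordinate of every state, for every $n$-component monotone network simultaneously; after $n$ phases every coordinate of every state is settled, i.e. the final state is a fixed point, and the total length is $O(n^3)$. Two preliminary facts drive everything. First, since each $f^i$ is order-preserving (if $x\le y$ then $f_i(x)\le f_i(y)$, so $f^i(x)\le f^i(y)$), the map $f^w$ is order-preserving for every word $w$. Second, a state is a fixed point of the sweep map $g:=f^{12\cdots n}$ if and only if it is a fixed point of $f$: within one sweep each coordinate is updated exactly once, so if the sweep returns to its starting state no coordinate can have moved (a coordinate that changed at its update step is never touched again during the sweep), whence $f_i(x)=x_i$ for all $i$.

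First I would analyse sweeping from the extremes. Starting from $\ZERO$, each coordinate can only rise during a sweep, so $g(\ZERO)\ge\ZERO$ and, by order-preservation, $g^k(\ZERO)$ is nondecreasing; since its weight strictly increases until stabilisation, it reaches a fixed point of $g$, hence of $f$, within $n$ sweeps, namely the least fixed point $a^\ast$. Dually, $g^k(\ONE)$ decreases to the greatest fixed point $b^\ast$ within $n$ sweeps. As $f^w$ is order-preserving and every state $x$ satisfies $\ZERO\le x\le\ONE$, after $n$ sweeps every state lies in $[a^\ast,b^\ast]$; moreover on this interval each coordinate $i$ with $a^\ast_i=b^\ast_i$ is \emph{frozen}, because $f_i$ is constant there while the remaining letters keep the state inside $[a^\ast,b^\ast]$. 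Restricting $f$ to $[a^\ast,b^\ast]$ yields a monotone network on the free coordinates, the frozen coordinates already carrying their fixed-point values. Thus, unless $a^\ast=\ZERO$ and $b^\ast=\ONE$, the phase $(12\cdots n)^n$ of length $n^2$ settles at least one coordinate, and I would recurse on the free coordinates.

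The crux, which I expect to be the main obstacle, is the case where $a^\ast=\ZERO$ and $b^\ast=\ONE$, i.e. both extremes are already fixed. Here uniform sweeping makes no progress and can even cycle: for the monotone shift $f=(x_2,x_3,x_1)$ the sweep $g$ cycles between $010$ and $101$, so no power of $12\cdots n$ fixes these states. What does work is to sweep in a \emph{different} order, e.g. the single sweep $213$ already fixes every state of this network, and this is precisely the phenomenon behind Theorem~\ref{thm:main2}: breaking such cycles forces the word to realise many distinct orders. I would therefore take each phase to be a net-independent word of length $O(n^2)$ embedding sweeps in a rich family of orders (a shortest word containing all permutations of $[n]$ as subsequences already has length $\Theta(n^2)$), and argue that for every monotone network some embedded reordered sweep makes progress toward the fixed points from every state at once.

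The hard part will be proving this guarantee uniformly: that a single phase of length $O(n^2)$ forces progress for \emph{every} such network and \emph{every} starting state simultaneously, despite the fact that different states must converge to different fixed points and that the ``good'' order depends on the network. Identifying a potential that this phase provably decreases, for instance the number of still-free coordinates or the height of the lattice of fixed points reachable from the current image $f^{w}(\B^n)$, and bounding the number of phases by $n$, is where the real work lies; the order-preservation and sweep lemmas above serve to reduce the whole theorem to this single combinatorial statement about reordered sweeps.
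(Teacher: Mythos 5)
Your proposal has a genuine gap, and you have located it yourself: the case where $\ZERO$ and $\ONE$ are both fixed points is exactly where the whole difficulty of the theorem sits, and your text offers only the hope that ``some embedded reordered sweep makes progress,'' not a proof. Worse, the gap is not just a missing verification: the potential you propose (the number of coordinates settled uniformly across all states, which is what your recursion on $[a^\ast,b^\ast]$ and its ``free coordinates'' requires) provably cannot decrease in that case. Indeed, if $f(\ZERO)=\ZERO$ and $f(\ONE)=\ONE$, then $f^w(\ZERO)=\ZERO$ and $f^w(\ONE)=\ONE$ for every word $w$, so after any phase every coordinate still takes both values $0$ and $1$ on the set of reached states; no coordinate is ever frozen uniformly. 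So the phase/recursion structure collapses precisely on the crux case (e.g.\ any conjunctive network on a cycle), and any correct argument must measure progress \emph{per state}, toward state-dependent fixed points. Your preliminary facts (order-preservation of $f^w$, the sweep analysis giving the least and greatest fixed points $a^\ast,b^\ast$, and the freezing of coordinates where $a^\ast_i=b^\ast_i$) are all correct, but they only yield monotone trajectories for the two extreme states, not for arbitrary ones.

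The paper closes this gap with two lemmas you do not have and a different induction. First (Lemma~\ref{lm:local_increasing_or_decreasing}), for a monotone $f$, once a state satisfies $x\leq f(x)$, its \emph{entire asynchronous future} is nondecreasing, whatever letters are applied; second (Lemma~\ref{lem:increasing1}), a state with a nondecreasing future is sent to a fixed point by any word containing all permutations of its zero-coordinates, in particular by an $n$-complete word of length $O(n^2)$. The fixing word is then built by induction on the number of components, $W^n = W^{n-1},\, n,\, \omega^{n-1}$ with $\omega^{n-1}$ an $(n-1)$-complete word (Theorem~\ref{thm:monotone_universal}): after $W^{n-1}$, the first $n-1$ coordinates of the current state form a fixed point of the network obtained by freezing coordinate $n$ at its initial value; then either the state is already fixed, or updating $n$ flips it into the regime $y\leq f(y)$ (or $y\geq f(y)$), where the complete word $\omega^{n-1}$ finishes the job for that state. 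Note this is not ``one more coordinate gets settled per phase'': different initial states end at different fixed points throughout, which is exactly what your uniform-freezing potential cannot accommodate. You correctly guessed the building blocks (blocks of complete words, $n$ of them, each of length $O(n^2)$), but the missing ingredient is the ``once increasing, always increasing'' property of monotone networks and the per-state induction it enables.
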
 

Our last main result refines the previous one using the interaction graph. The \BF{transversal number} of a directed graph is the minimum number of vertices to delete to make the directed graph acyclic.

\begin{theorem}\label{thm:main4}
Let $G$ be an $n$-vertex directed graph with transversal number $\tau$. There is a word of length $O(\tau^2n)$ that fixes every $n$-component monotone network with an interaction graph isomorphic to a subgraph of $G$. 
\end{theorem}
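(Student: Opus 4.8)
The plan is to refine the cubic upper bound of Theorem~\ref{thm:main3} by localizing the work to a transversal of $G$. Let $T\subseteq[n]$ be a minimum set of vertices whose deletion makes $G$ acyclic, so $|T|=\tau$, and write $A:=[n]\setminus T$ for the remaining vertices, which induce an acyclic subgraph. The key structural observation is that in any monotone network $f$ whose interaction graph sits inside $G$, the dynamics restricted to the coordinates in $A$ behaves acyclically once the values on $T$ are frozen: there is a topological order $v_1,\dots,v_{|A|}$ of $A$ in $G[A]$ such that each $f_{v_j}$ depends (among the $A$-coordinates) only on $v_1,\dots,v_{j-1}$. I would first show that applying the letters of $A$ in this topological order exactly once drives all $A$-coordinates to their stable values \emph{for the current fixed values on $T$}; call this word $\alpha$, of length $|A|\le n$. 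This is precisely the mechanism that makes acyclic networks fixable by a single sweep, and I would invoke (or reprove in a line) the acyclic fixing result underlying Theorem~\ref{thm:main3}.

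**The outer loop over the transversal**

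Next I would handle the feedback through $T$. The idea is to interleave sweeps of $\alpha$ with updates of the $\tau$ transversal letters. Concretely, consider the word
\[
w \;=\; \bigl(\alpha\, t_1 t_2 \cdots t_\tau\bigr)^{\,k}
\]
for a suitable number of repetitions $k$, where $t_1,\dots,t_\tau$ is any ordering of $T$. Each block first stabilizes the acyclic part and then allows each transversal coordinate one opportunity to flip. The crucial claim is that, because $f$ is monotone, the sequence of states reached at the end of successive blocks is itself monotone (either nondecreasing or, after a symmetric argument, controlled), so each transversal coordinate can change value at most a bounded number of times across the whole run. Monotonicity is the engine here: it converts the asynchronous trajectory into a comparable chain, preventing oscillation and forcing convergence.

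**Counting the repetitions**

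The heart of the argument is bounding $k$. I expect that each complete block either reaches a fixed point or strictly increases the restriction of the state to $T$ in the coordinatewise order; since $T$ has $\tau$ coordinates, a monotone chain in $\B^T$ has length at most $\tau+1$, giving roughly $k=O(\tau)$ blocks. Each block has length $|A|+\tau = O(n)$, so the total length is $O(\tau\cdot n)$. To reach the stated $O(\tau^2 n)$ I anticipate needing a slightly weaker per-block guarantee: rather than a strict increase of the $T$-restriction at every block, one may only guarantee progress once every $O(\tau)$ blocks (for instance, because a single pass $t_1\cdots t_\tau$ need not stabilize the mutual dependencies among transversal vertices in one shot, so several passes are needed to propagate one ``layer'' of change through $T$). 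Accounting $O(\tau)$ blocks to secure each of the $O(\tau)$ possible increments yields $O(\tau^2)$ blocks and hence length $O(\tau^2 n)$.

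**Main obstacle**

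The main obstacle is the bookkeeping in the counting step: establishing that the end-of-block states form a monotone chain and that each increment in the $T$-coordinates is eventually forced. The subtlety is that a single flip of a transversal coordinate can destabilize the acyclic part, which the next $\alpha$ re-stabilizes, and one must rule out a scenario where the $A$-coordinates keep the $T$-coordinates oscillating. I would control this by choosing the standard ``both extremes'' technique: run the argument simultaneously from the all-zero and all-one initial states, use monotonicity to sandwich every trajectory between these two, and show both extreme trajectories stabilize within $O(\tau^2)$ blocks; since the same word $w$ drives both extremes to the (necessarily equal, on the image) fixed configuration, it fixes every state. Verifying that the sandwiching is preserved block-by-block, and that the two extreme runs meet in the required number of blocks, is where the real care lies.
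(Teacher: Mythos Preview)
Your proposal contains a genuine gap in the convergence argument. The central misconception is the claim that ``because $f$ is monotone, the sequence of states reached at the end of successive blocks is itself monotone.'' Monotonicity of $f$ means $x\le y\Rightarrow f^w(x)\le f^w(y)$; it does \emph{not} say that a single trajectory $x,\,f^{w}(x),\,f^{ww}(x),\dots$ is a chain. The paper's Lemma~\ref{lm:local_increasing_or_decreasing} gives a monotone trajectory only under the hypothesis $x\le f(x)$ (or $x\ge f(x)$), which fails for a generic initial state and is not produced by your block $\alpha\,t_1\cdots t_\tau$. The fallback you propose, sandwiching between the trajectories from $\ZERO$ and $\ONE$, also breaks: monotone networks can have many fixed points, so $f^{w}(\ZERO)$ and $f^{w}(\ONE)$ need not coincide (take $f=\mathrm{id}$), and lying in the interval $[f^{w}(\ZERO),f^{w}(\ONE)]$ says nothing about being a fixed point. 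In fact the simple block scheme can oscillate: for $f_1=x_2,\ f_2=x_3,\ f_3=x_1,\ f_4=x_1$ with $T=\{1,2\}$ and $\alpha=34$, the word $(3412)^k$ sends $(0,1,0,0)$ to the $2$-cycle $(1,0,0,0)\leftrightarrow(0,1,1,1)$ for every $k$, while both $\ZERO$ and $\ONE$ are already fixed. (Here $T$ is not minimum; whether your word converges for a \emph{minimum} transversal is a separate question your argument does not address, since neither the chain claim nor the sandwich uses minimality.)

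The paper takes a different route that sidesteps this issue entirely. Instead of repeating a block, it builds the fixing word incrementally as $W=w^1w^2\cdots w^n$ with $w^i=i,\omega^i$, and proves by induction on $i$ that $w^1\cdots w^i$ fixes $F_M(G[\{1,\dots,i\}])$. The crucial point is that after $w^1\cdots w^{i-1}$ the state $y$ agrees with $f(y)$ on all coordinates $1,\dots,i-1$, hence $y$ and $f(y)$ differ in at most the single coordinate $i$ and are therefore \emph{comparable}. This is exactly the hypothesis of Lemma~\ref{lm:local_increasing_or_decreasing}, so from $f^i(y)$ onward the trajectory is genuinely monotone and the increasing-network machinery (Lemma~\ref{lem:increasing1}) applies. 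The savings over Theorem~\ref{thm:monotone_universal} come from observing that the coordinates that can still move after updating $i$ lie in the reachability set $R_i$ of $i$ inside $G[\{1,\dots,i\}]$, and that along the acyclic part they can be taken in topological order; hence $\omega^i$ need only be an $(i{-}\alpha{-}1,\alpha)$-complete word rather than a full $(i{-}1)$-complete word, giving the $O(\tau^2 n)$ bound.
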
 

Note that Theorem~\ref{thm:main3} trivially shows that the fixing length of a given monotone network is at most cubic, and we have not been able to obtain a sub-cubic upper-bound for this fixing length. However, Theorem~\ref{thm:main4} shows that, for bounded transversal number, the fixing length of a given monotone network is only linear. Note also that one obtains Theorem~\ref{thm:main3} from Theorem~\ref{thm:main4} when $G$ is the complete directed graph on $n$ vertices (with $n^2$ edges). 

The paper is organized as follows. In Section \ref{sec:preliminaries}, we give some basic definitions and notations. We also introduce two families of networks, the acyclic and increasing networks, and show that the fixing length of these two families is asymptotically $n^2$. For that, we use results concerning $n$-complete words. The technics introduced are then used, in Section~\ref{sec:monotone}, to analyse the fixing length of monotone networks. Quantitative versions of Theorems~\ref{thm:main1}, \ref{thm:main2} and \ref{thm:main3} are proved there. Section~\ref{sec:extensions} gives some refinements and extensions. We first study the fixing length of conjunctive networks, which are particular monotone networks. Then, we prove a quantitative version of Theorem~\ref{thm:main4} and we study the fixing length of the family of balanced networks, which generalize monotone networks. Finally, a conclusion and some perspectives are given in Section~\ref{sec:conclusion}.

\section{Preliminaries}\label{sec:preliminaries}

\subsection{Basic definition and notations}

Let $w=w_1\dots w_p$ be a word. The length $p$ of $w$ is denoted $|w|$. If $S =\{i_1,i_2,\dots i_q\}\subseteq[p]$ with $i_1 <i_2 < \dots< i_q$, then we shall sometimes use the notation $w_S = w_{i_1}w_{i_2} \dots w_{i_q}$; if $S = \emptyset$, then $w_S := \epsilon$, where $\epsilon$ is the empty word. Any such $w_S$ is a \BF{subsequence} of $w$. Moreover, for any integers $a, b\in [p]$ we set $[a,b] = \{a, a+1, \dots, b\}$ and hence $w_{[a,b]}:=w_a,\dots,w_b$ if $a\leq b$ and $w_{[a,b]}:=\epsilon$ if $a>b$. Any such $w_{[a,b]}$ is a \BF{factor} of $w$. For any word $w$ and any $k \ge 1$, the word $k \cdot w$ is obtained by repeating $w$ exactly $k$ times; $0 \cdot w$ is the empty word. For all $i \in [n]$, we denote as $e_i$ the $i$-th unit vector, i.e. $e_i = (0, \dots, 0, 1, 0, \dots, 0)$ with the $1$ in position $i$. Given $x,y\in\B^n$, $x+y$ is applied componentwise and computed modulo two. For instance, $x$ and $x+e_i$ only differ in the $i$th position. The state containing only $1$s is denoted $\ONE$, and the state containing only $0$s is denoted $\ZERO$. We equip $\B^n$ with the partial order $\leq$ defined as follows: for all $x,y\in\B^n$, $x\leq y$ if and only if $x_i\leq y_i$ for all $i\in [n]$. The {\bf Hamming weight} of $x$, denote $\wH(x)$,  is the number of $1$s in $x$. Let $f$ be an $n$-component network. We set $f^\epsilon:=\mathrm{id}$ and, for any integer $i$ and $x\in\B^n$, we define $f^i(x)$ as in the introduction if $i\in [n]$, and $f^i(x):=x$ if $i\not\in [n]$. This extends the action of letters in $[n]$ to letters in $\mathbb{N}$, and by extension, this also defines the action of a word over the alphabet $\mathbb{N}$.

Graphs are always directed and may contain loops (edges from a vertex to itself). Paths and cycles are always directed and without repeated vertices. Given a graph $G$ with vertex set $V$ (such a graph is a graph \BF{on} $V$) and $I\subseteq V$, we denote by $G[I]$ the subgraph of $G$ induced by $I$, and $G\setminus I=G[V\setminus I]$. We refer the reader to the authoritative book on graphs by Bang-Jensen and Gutin \cite{BG08} for some basic concepts, notation and terminology.   

We now recall from the introduction the definition of monotone networks.

\begin{definition}[Monotone networks]
An $n$-component network $f$ is \BF{monotone} if, 
\[
\forall x,y\in\B^n,\qquad x\leq y\Rightarrow f(x)\leq f(y).
\]
The family of $n$-component monotone networks is denoted $F_M(n)$.
\end{definition}

The fixing length of $F_M(n)$ is denoted $\lambda_M(n)$. More generally, if $F_X(n)$ is any family of $n$-component fixable networks, then $\lambda_X(n)$ is the fixing length of $F_X(n)$. If $G$ is a graph on $[n]$, then $F(G)$ denotes the set of $n$-component networks $f$ such that the interaction graph of $f$ is a subgraph of $G$. Then, $F_X(G):=F_X(n)\cap F(G)$ and $\lambda_X(G)$ is the fixing length of $F_X(G)$.

\subsection{Acyclic networks} \label{sec:acyclic}

Our first results concern acyclic networks. 

\begin{definition}[Acyclic networks]
An $n$-component network $f$ is \BF{acyclic} if its interaction graph is acyclic. The family of $n$-component acyclic networks is denoted $F_A(n)$. 
\end{definition}

An important property of acyclic networks is that they have a unique fixed point \cite{R80} and that they have an acyclic asynchronous graph \cite{R95} (the asynchronous graph of an $n$-component network $f$ is the graph on $\B^n$ with an edge from $x$ to $y$ $f^i(x)=y\neq x$ for some $i\in [n]$). This obviously implies that $F_A(n)$ is fixable. We show here that the fixing length of acyclic networks are rather easy to understand. The techniques used will be useful later, for analyzing the fixing length of monotone networks. 

\begin{lemma}\label{lem:acyclic1}
Let $G$ be an acyclic graph on $[n]$ and $f\in F(G)$. If a word $w$ contains, as subsequence, a topological sort of $G$, then $w$ fixes $f$. Furthermore, $\lambda(f)=n$. 
\end{lemma}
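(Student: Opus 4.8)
The plan is to treat the two assertions separately, leaning on the fact (Robert \cite{R80}, cited above) that an acyclic network has a \emph{unique} fixed point, which I will call $z$. First I would fix a topological sort $v_1,\dots,v_n$ of $G$, so that whenever $G$ has an edge from $v_j$ to $v_k$ one has $j<k$. Since the interaction graph of $f$ is a subgraph of $G$, this is also a topological sort for $f$: concretely, the coordinates that $f_{v_k}$ depends on (the in-neighbours of $v_k$) all lie in $\{v_1,\dots,v_{k-1}\}$. This is the only structural input needed from acyclicity.

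For the first assertion I would track the invariant that a prefix of the coordinates is \emph{locked} to the fixed point. Precisely, I claim that if a state $x$ agrees with $z$ on $\{v_1,\dots,v_{k-1}\}$, then applying the letter $v_k$ makes it agree with $z$ on $\{v_1,\dots,v_k\}$, and that this agreement can never afterwards be destroyed. The first half holds because $f_{v_k}$ reads only coordinates $v_1,\dots,v_{k-1}$, on which $x$ and $z$ coincide, so $f_{v_k}(x)=f_{v_k}(z)=z_{v_k}$ and the action of $v_k$ sets coordinate $v_k$ to $z_{v_k}$. For persistence, any later letter $v_m$ with $m>k$ touches only coordinate $v_m$, while re-applying a letter $v_j$ with $j\le k$ recomputes $f_{v_j}$ from coordinates already equal to $z$ and hence changes nothing. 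Given a word $w$ that contains $v_1\dots v_n$ as a subsequence, I would then induct along the chosen occurrences: each occurrence of $v_k$ extends the locked prefix by one coordinate, and the arbitrary letters interleaved between occurrences preserve it. After the occurrence of $v_n$ the whole state equals $z$, so $f^w(x)=z$ for every $x$, i.e. $w$ fixes $f$. Taking $w=v_1\dots v_n$ gives in particular $\lambda(f)\le n$.

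For the matching lower bound I would show that every fixing word contains each letter $i\in[n]$ at least once. Consider the starting state $x=z+e_i$, that is $z$ with coordinate $i$ flipped. If a word $w$ omits the letter $i$, then coordinate $i$ is never updated, so $f^w(x)_i=(z+e_i)_i\neq z_i$ and thus $f^w(x)\neq z$; as $z$ is the unique fixed point, $f^w(x)$ is not a fixed point and $w$ fails to fix $f$. Hence any fixing word uses all $n$ letters and has length at least $n$, giving $\lambda(f)\ge n$, and combined with the upper bound $\lambda(f)=n$.

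The only delicate point is the bookkeeping in the first assertion, namely making precise the interleaving of the ``useful'' subsequence occurrences of $v_1,\dots,v_n$ with the arbitrary extra letters of $w$. The persistence half of the invariant is exactly what tames this: it guarantees that the locked prefix is monotone and only grows, so the argument collapses to a clean induction on the length of that prefix, and everything else follows from the topological ordering and the uniqueness of $z$.
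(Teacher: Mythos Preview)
Your proof is correct and follows essentially the same route as the paper's: both track, by induction along the occurrences of a topological sort $v_1,\dots,v_n$ inside $w$, the invariant that after the $k$-th occurrence the first $k$ coordinates (in topological order) are locked to the unique fixed point, with the persistence of this invariant under intermediate letters being the key observation. The only cosmetic difference is in the lower bound: the paper starts at $\neg z$ and notes that reaching $z$ requires at least $n$ coordinate flips, whereas you start at $z+e_i$ for each $i$ to conclude that every letter must appear at least once; both are immediate and yield $\lambda(f)\ge n$.
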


\begin{proof}
Let $y$ be the unique fixed point of $f$. Let $u=i_1i_2 \dots i_n$ be topological sort of $G$, and let $w=w_1w_2\dots w_p$ be any word containing $u$ as subsequence. Hence, there is a increasing sequence of indices $j_1j_2\dots j_n$ such that $u=w_{j_1}w_{j_2}\dots w_{j_n}$. Let $x^0$ be any initial state, and for all $q\in [p]$, let $x^q$ be obtained from $x^{q-1}$ by updating $w_q$, that is, $x^q:=f^{w_q}(x^{q-1})$. Equivalently, $x^q:=f^{w_{[1,q]}}(x^0)$. Let us prove, by induction on $k\in [n]$, that $x^q_{i_k}=y_{i_k}$ for all $j_k\leq q\leq p$. Since $i_1$ is a source of the interaction graph, $f_{i_1}$ is a constant. Thus $f_{i_1}(x^q)=f_{i_1}(y)=y_{i_1}$ for all $q\in [p]$, and since $w_{j_1}=i_1$, we deduce that $x^q_{i_1}=y_{i_1}$ for all $j_1\leq q\leq p$. Let $1<k\leq n$. Since $f_{i_k}$ only depends on components $i_l$ with $1\leq l<k$, and since, by induction, $x^q_{i_l}=y_{i_l}$ for all $1\leq l<k$ and $j_{k-1}\leq q\leq p$, we have $f_{i_k}(x^q)=f_{i_k}(y)=y_{i_k}$ for all $j_{k-1}\leq q\leq p$. Since $w_{j_k}=i_k$ we deduce that $x^q_{i_k}=y_{i_k}$ for all $j_k\leq q\leq p$, completing the induction step. Hence, $f^w(x^0)=x^q=y$ for any initial state $x^0$, thus $w$ fixes $f$. 

We deduce that, in particular, any topological sort $u$ of $G$ fixes $f$, thus $\lambda(f)\leq n$. Conversely, if a word $w$ fixes $f$, then $f^w(\neg y) = y$, and hence at least $n$ asynchronous updates are required, that is, the length of $w$ is at least $n$. Thus $\lambda(f)=n$.
\end{proof}

The converse of the previous proposition is false in general (for instance if $f$ is the $3$-component network defined by $f_1(x)=0$, $f_2(x)=x_1$ and $f_3(x)=x_1\land x_2$, then $132$ fixes $f$ while $123$ is the unique topological sort of the interaction graph of $f$) but it holds for conjunctive networks, which are specific monotone networks.

\begin{definition}[Conjunctive networks]
An $n$-component network $f$ is \BF{conjunctive} if, for all $i\in [n]$, there exists $J_i\subseteq [n]$ such that,
\begin{equation}\label{eq:conj}
\forall x\in\B^n,\qquad f_i(x)= \bigwedge_{j\in J_i}x_j,
\end{equation}
and $f_i(x)=1$ is $J_i$ is empty. The family of $n$-component conjunctive networks is denoted $F_C(n)$. Let $G$ be a graph on $[n]$. The \BF{conjunctive network on $G$} is the unique conjunctive network whose interaction graph is $G$. Namely, it is the $n$-component network $f$ such that \eqref{eq:conj} holds for all $i\in[n]$ when $J_i$ is the set of in-neighbors of $i$ in $G$. 
\end{definition}

\begin{lemma}\label{lem:acyclic2}
Let $G$ be an acyclic graph on $[n]$ and let $f$ be the conjunctive network on $G$. A word $w$ fixes $f$ if and only if it contains, as subsequence, a topological sort of $G$.
\end{lemma}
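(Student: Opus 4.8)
The forward (``if'') direction requires no new work: Lemma~\ref{lem:acyclic1} already guarantees that any word containing a topological sort of $G$ as a subsequence fixes every network in $F(G)$, and the conjunctive network $f$ on $G$ lies in $F(G)$. So the whole content is the converse, and the plan is to prove it using the single initial state $\ZERO$. First I would pin down the target: since $G$ is acyclic, $f$ has a unique fixed point, and $\ONE$ is plainly a fixed point of a conjunctive network (every conjunction of $1$'s, including the empty conjunction at the sources, evaluates to $1$); hence $\ONE$ is that unique fixed point, and if $w$ fixes $f$ then in particular $f^w(\ZERO)=\ONE$.

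The crux is to control the trajectory $\ZERO=x^0,x^1,\dots,x^p=f^w(\ZERO)$, where $x^q=f^{w_{[1,q]}}(\ZERO)$. I would show that no coordinate ever decreases along this run, i.e. $x^0\le x^1\le\dots\le x^p$, by induction on $q$. The key point, and the only place the conjunctive structure is used essentially, is that a coordinate $i$ sitting at value $1$ cannot be reset to $0$: if $x^q_i=1$, then $i$ was switched to $1$ at some earlier update, at which moment $\bigwedge_{j\in J_i}x_j=1$, so every in-neighbor $j\in J_i$ was then at $1$; by the induction hypothesis those in-neighbors are still at $1$ at time $q$, whence $f_i(x^q)=1$ and updating $i$ keeps it at $1$.

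Granted this monotonicity, the remainder is bookkeeping. For each $i\in[n]$ let $t_i$ be the position of the update in $w$ at which coordinate $i$ switches from $0$ to $1$; since $f^w(\ZERO)=\ONE$ and the trajectory is non-decreasing, each $t_i$ is well defined, $w_{t_i}=i$, and the $t_i$ are pairwise distinct (one coordinate changes per step). Listing the coordinates in increasing order of $t_i$ and reading $w$ at the positions $t_i$ then spells exactly this ordering, so it is a subsequence of $w$. It remains to verify that it is a topological sort of $G$: if $j\to i$ is an edge then $j\in J_i$, and for $i$ to flip to $1$ at time $t_i$ we need $x^{t_i-1}_j=1$, i.e. $j$ has already flipped, so $t_j<t_i$; thus $j$ precedes $i$ in the ordering, which is therefore a topological sort, completing the argument.

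I expect the monotonicity claim to be the only genuinely delicate step; the extraction of a topological sort from the first-switch times is then routine. A pleasant feature of this approach is that it uses only the single input $\ZERO$, so the lemma in fact holds under the weaker hypothesis that $w$ merely drives $\ZERO$ to $\ONE$.
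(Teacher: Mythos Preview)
Your proof is correct and follows essentially the same approach as the paper: start from $\ZERO$, let $t_i$ be the first time coordinate $i$ reaches $1$, and check that ordering the vertices by the $t_i$ yields a topological sort of $G$ that is a subsequence of $w$. The paper's version is slightly leaner in that it never proves (or needs) the full monotonicity of the trajectory: defining $t_i$ as the \emph{first} time $x^{t_i}_i=1$, the implication $t_j<t_i$ for every edge $j\to i$ follows directly from $f_i(x^{t_i-1})=1$ forcing $x^{t_i-1}_j=1$, so your ``delicate step'' can in fact be omitted.
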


\begin{proof}
According to Lemma~\ref{lem:acyclic1}, it is sufficient to prove that if $w=w_1w_2\dots w_p$ fixes $f$ then $w$ contains, as subsequence, a topological sort of $G$. Let $x^0:=\ZERO$ and $x^q=f^{w_q}(x^{q-1})$ for all $q\in [p]$. Since $w$ fixes $f$ and since $\ONE$ is the unique fixed point of $f$, we have $f^w(x^0)=x^p=\ONE$. Thus for each $i\in [n]$, there exists $t_i$ such that $x^{t_i}_i=1$ and $x^q_i=0$ for all $0\leq q<t_i$. We have, obviously, $w_{t_i}=i$. Let $i_1i_2\dots i_n$ be the enumeration of the vertices of $G$ such that $t_{i_1}t_{i_2}\dots t_{i_n}$ is increasing. In this way $i_1i_2\dots i_n$ is a subsequence of $w$, and it follows the topological order. Indeed, suppose that $G$ has an edge from $i_k$ to $i_l$. Since $f_{i_l}(x^{t_{i_l}-1})=x^{t_{i_l}}_{i_l}=1$, we have $x^{t_{i_l}-1}_{i_k}=1$, and thus $t_{i_k}<t_{i_l}$, that is, $i_k$ is before $i_l$ in the enumeration.
\end{proof}

As an immediate application we get the following characterization. 

\begin{proposition}\label{pro:acyclic3}
Let $G$ be an acyclic graph on $[n]$. A word $w$ fixes $F(G)$ if and only if it contains, as subsequence, a topological sort of $G$. 
\end{proposition}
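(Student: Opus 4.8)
The plan is to deduce both implications directly from the two preceding lemmas, observing that the conjunctive network on $G$ witnesses the sharpness of the ``only if'' direction while Lemma~\ref{lem:acyclic1} supplies the ``if'' direction uniformly over the whole family.

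First I would treat the sufficiency. Suppose $w$ contains a topological sort of $G$ as a subsequence. Every $f\in F(G)$ has interaction graph equal to a subgraph of $G$, and in particular is acyclic; Lemma~\ref{lem:acyclic1}, applied to this $f$, then guarantees that $w$ fixes $f$. Since this holds for \emph{every} member of $F(G)$, the word $w$ fixes the family $F(G)$ by definition.

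For the converse I would single out the most demanding member of $F(G)$, namely the conjunctive network $f$ on $G$. By definition its interaction graph is exactly $G$, so $f\in F(G)$; consequently any $w$ that fixes $F(G)$ must in particular fix $f$. Applying Lemma~\ref{lem:acyclic2} to this $f$ then yields immediately that $w$ contains a topological sort of $G$ as a subsequence, completing the equivalence.

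There is essentially no obstacle here: the statement is an immediate corollary of the two lemmas. The only point worth a brief verification—and it is entirely routine—is that the conjunctive network on $G$ really does belong to $F(G)$, i.e. that its interaction graph is a subgraph of $G$; this is immediate from its definition, where each coordinate $f_i$ is the conjunction taken precisely over the in-neighbours of $i$ in $G$. Thus the proposition follows by combining the uniform sufficiency of Lemma~\ref{lem:acyclic1} with the tightness, certified by the conjunctive network, of Lemma~\ref{lem:acyclic2}.
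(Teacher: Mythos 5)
Your proof is correct and is exactly the argument the paper intends: the paper states the proposition as an ``immediate application'' of Lemma~\ref{lem:acyclic1} (which gives sufficiency uniformly over $F(G)$) and Lemma~\ref{lem:acyclic2} (applied to the conjunctive network on $G$, which lies in $F(G)$ since its interaction graph is $G$ itself). Nothing to add.
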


To go further, we need the following concepts. 

\begin{definition}[Complete word] 
A word $w$ is {\bf complete} for a finite set $S$ (or $S$-complete) if it contains, as subsequence, all the permutations of $S$. An \BF{$n$-complete word} is a $[n]$-complete word. The length of a shortest $n$-complete word is denoted $\lambda(n)$.
\end{definition}

Interestingly, $\lambda(n)$ is unknown. Let $w^1,\dots,w^n$ be $n$ permutations of $[n]$ (not necessarily distinct). Then the concatenation $w^1w^2\dots w^n$ clearly contains all the permutations of $[n]$. Thus $\lambda(n)\leq n^2$. Conversely, if $w$ contains all the permutations of $n$, then ${|w|\choose n}$ is at least $n!$ and we deduce that $|w|\geq n^2/e^2$ (this simple counting argument will be reused later). This shows that the magnitude of $\lambda(n)$ is quadratic. We have however the following tighter bounds.

\begin{theorem} We have $\lambda(n)\sim n^2$. More precisely:
\[
\begin{array}{lll}
\lambda(n)\leq n^2-2n+4&\text{for all }n\geq 1& \text{\em \cite{A74}}\\[1mm]
\lambda(n)\leq n^2-2n+3&\text{for all }n\geq 10&\text{\em \cite{Z11}}\\[1mm]
\lambda(n) \le \left\lceil n^2-\frac{7}{3}n+\frac{19}{3} \right\rceil&\text{for all }n\geq 7& \text{\em\cite{R12}}\\[1mm]
\lambda(n) \ge n^2-C_\varepsilon n^{7/4+\varepsilon}&&\text{\em\cite{KK76}}
\end{array}
\]
where $\varepsilon>0$ and $C_\varepsilon$ is a positive constant that only depends on $\varepsilon$. 
\end{theorem}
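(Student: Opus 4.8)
The statement is really a compilation: each displayed inequality is quoted from the cited literature, and the asymptotic equivalence $\lambda(n)\sim n^2$ is an immediate consequence of the two extreme lines. The plan is therefore twofold: first derive $\lambda(n)\sim n^2$ from the sharpest upper and lower bounds, and then indicate how the individual bounds are obtained, separating the elementary upper-bound constructions from the genuinely hard lower bound. I would expect to supply the equivalence and the upper-bound idea in full, while invoking the deep lower bound as a black box.

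For the equivalence itself, the first displayed inequality gives $\lambda(n)/n^2\le 1-2/n+4/n^2\to 1$, while the last gives $\lambda(n)/n^2\ge 1-C_\varepsilon n^{-1/4+\varepsilon}$, which also tends to $1$ as soon as $0<\varepsilon<1/4$. Squeezing yields $\lambda(n)/n^2\to 1$, i.e. $\lambda(n)\sim n^2$. Note that the crude estimates already recorded in the text, namely $n^2/e^2\le\lambda(n)\le n^2$, only pin down the order of magnitude; the entire content of the theorem is to replace the constant $1/e^2$ by $1$.

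For the upper bounds, the natural approach is an inductive construction. Given an $(n-1)$-complete word $w$, I would build an $n$-complete word by inserting a bounded number of copies of the new symbol $n$ into $w$: since every permutation of $[n]$ restricts to a permutation of $[n-1]$ already present as a subsequence of $w$, it suffices to guarantee that $n$ can be read off before, between, or after the letters realising any such subsequence, and interleaving about $2n-3$ occurrences of $n$ achieves this. This yields a recurrence of the shape $\lambda(n)\le\lambda(n-1)+(2n-3)$, and summing against a small base case gives $\lambda(n)\le n^2-2n+O(1)$. Optimising the placement of the inserted letters and the base cases tightens the additive constant, which is precisely the computation carried out in \cite{A74}, \cite{Z11}, \cite{R12}.

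The lower bound is where the difficulty lies, and it is the step I expect to be unable to shortcut. The easy counting estimate already recalled bounds the number of length-$n$ subsequences of a word of length $m$ by $\binom{m}{n}$, forces $\binom{m}{n}\ge n!$, and so yields only $m\ge n^2/e^2$. Pushing the constant up to $1$ requires arguing that distinct permutations cannot share too many of their witnessing subsequences, so that the word is compelled to be nearly as long as the trivial concatenation of $n$ permutations. Reproducing this is the delicate combinatorial analysis of Kleitman and Kwiatkowski \cite{KK76} giving $\lambda(n)\ge n^2-C_\varepsilon n^{7/4+\varepsilon}$, and I would simply invoke it. In short, only the lower bound is substantial, and it is the main obstacle.
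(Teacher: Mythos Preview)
Your reading is correct: the paper gives no proof of this theorem at all---it simply states the four bounds with citations and moves on. Your derivation of $\lambda(n)\sim n^2$ from the first and last lines is exactly what is needed (and is more than the paper writes out), and your decision to invoke the Kleitman--Kwiatkowski lower bound as a black box matches the paper's treatment.

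One caveat on the extra material you offer: the inductive upper-bound sketch (``insert about $2n-3$ copies of $n$ into an $(n-1)$-complete word'') is not obviously sound as stated. For it to work, the inserted copies of $n$ must be placed so that \emph{every} embedding of \emph{every} permutation of $[n-1]$ in $w$ can be split at any of its $n$ gaps by some inserted $n$; it is not clear that $O(n)$ insertions suffice for an arbitrary $(n-1)$-complete word $w$. The actual constructions in the cited papers are more explicit (roughly, structured concatenations of permutations rather than arbitrary insertions). Since the paper itself does not reproduce those constructions, this does not affect the comparison, but you should not rely on the sketch as a self-contained proof of the $n^2-2n+O(1)$ bound.
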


We also need another family of networks.

\begin{definition}[Path networks]
An $n$-component network $f$ is a \BF{path network} if its interaction graph if a path. The family of $n$-component path networks is denoted $F_P(n)$. 
\end{definition}

Note that path networks are both acyclic and conjunctive. Note also that an $n$-component network $f$ is a path network if and only if there is a permutation $i_1i_2\dots i_n$ of $[n]$ such that $f_{i_1}(x)=1$ and $f_{i_k}(x)=x_{i_{k-1}}$ for all $1<k\leq n$ and $x\in\{0,1\}^n$. There is thus a natural bijection between the permutations of $[n]$ and $F_P(n)$. We show below that the family $F_P(n)$ has a quadratic fixing length. 

\begin{lemma}\label{lem:acyclic3}
A word $w$ fixes $F_P(n)$ if and only if it is $n$-complete. Hence $\lambda_P(n) = \lambda(n)$.
\end{lemma}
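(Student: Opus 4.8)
The plan is to reduce everything to Lemma~\ref{lem:acyclic2} via the bijection between permutations of $[n]$ and path networks. First I would fix a permutation $\pi = i_1 i_2 \dots i_n$ and describe its associated path network $f$, the one satisfying $f_{i_1}(x) = 1$ and $f_{i_k}(x) = x_{i_{k-1}}$ for $1 < k \le n$. The first step is to observe that this $f$ is precisely the conjunctive network on the path $G_\pi : i_1 \to i_2 \to \dots \to i_n$: indeed $J_{i_1} = \emptyset$ gives $f_{i_1}(x) = 1$, and for $k > 1$ the in-neighbourhood of $i_k$ in $G_\pi$ is the singleton $\{i_{k-1}\}$, so $f_{i_k}(x) = \bigwedge_{j \in \{i_{k-1}\}} x_j = x_{i_{k-1}}$. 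Thus Lemma~\ref{lem:acyclic2} applies to $f$.

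The second step is the elementary but essential remark that a path graph admits a unique topological sort. In $G_\pi$ the vertex $i_1$ is the unique source, $i_n$ the unique sink, and each $i_k$ must be placed after its sole predecessor $i_{k-1}$; hence the only topological sort of $G_\pi$ is $\pi$ itself. Combining this with Lemma~\ref{lem:acyclic2}, a word $w$ fixes $f$ if and only if $w$ contains $\pi$ as a subsequence.

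It then remains to quantify over all path networks. Since $\pi \mapsto f$ is a bijection from the permutations of $[n]$ onto $F_P(n)$, a word $w$ fixes $F_P(n)$ if and only if it fixes $f$ for every permutation $\pi$, which by the previous step holds if and only if $w$ contains every permutation of $[n]$ as a subsequence. By the definition of a complete word this is exactly the statement that $w$ is $n$-complete, proving the equivalence. Finally, $\lambda_P(n)$ is by definition the length of a shortest word fixing $F_P(n)$, and $\lambda(n)$ is the length of a shortest $n$-complete word; since the two classes of words coincide, their shortest members have equal length, giving $\lambda_P(n) = \lambda(n)$.

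I do not expect any genuine obstacle here, as the result follows almost immediately once Lemma~\ref{lem:acyclic2} is in hand. The only point requiring care is the identification of the path network with the conjunctive network on the path, so that Lemma~\ref{lem:acyclic2} (rather than the weaker Proposition~\ref{pro:acyclic3}, which concerns all of $F(G_\pi)$) can be invoked to obtain an exact characterisation of the fixing words of each individual path network.
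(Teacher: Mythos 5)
Your proposal is correct and takes essentially the same approach as the paper: the converse direction is precisely the paper's argument (Lemma~\ref{lem:acyclic2} combined with the fact that each permutation is the unique topological sort of the interaction graph of exactly one path network), and your forward direction, obtained from the ``if'' part of Lemma~\ref{lem:acyclic2}, matches the paper's appeal to Lemma~\ref{lem:acyclic1}, of which that part of Lemma~\ref{lem:acyclic2} is an immediate consequence. Your explicit identification of each path network with the conjunctive network on its path is a worthwhile clarification, but it does not change the substance of the argument.
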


\begin{proof}
By Lemma~\ref{lem:acyclic1}, any $n$-complete word fixes $F_A(n)$ and thus $F_P(n)$ in particular. Conversely, suppose that $w$ fixes $F_P(n)$. Since each permutation of $n$ is the unique topological sort of the interaction graph of exactly one network in $F_P(n)$, by Lemma \ref{lem:acyclic2}, $w$ contains, as subsequence, the $n!$ permutations of $n$. Thus $w$ is $n$-complete. 
\end{proof}

As an immediate consequence, we get the following proposition. 

\begin{proposition} \label{prop:FA}
A word $w$ fixes $F_A(n)$ if and only if it is $n$-complete. Hence $\lambda_A(n) = \lambda(n)$.
\end{proposition}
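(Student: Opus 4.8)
The plan is to sandwich $F_A(n)$ between the path networks and the full acyclic family, and then read off both directions of the equivalence from the two lemmas already established. Since $F_P(n)$ is contained in $F_A(n)$ (path networks are acyclic) and every acyclic network is handled by Lemma~\ref{lem:acyclic1}, almost nothing new needs to be done.

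First I would dispatch the necessity direction. If $w$ fixes $F_A(n)$, then in particular $w$ fixes $F_P(n)$, because every path network is acyclic and hence belongs to $F_A(n)$. Lemma~\ref{lem:acyclic3} says precisely that a word fixing $F_P(n)$ is $n$-complete, so this forces $w$ to be $n$-complete with no further argument.

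For the sufficiency direction I would take an arbitrary $f\in F_A(n)$ with interaction graph $G$. Since $G$ is acyclic on $[n]$, any topological sort of $G$ enumerates all $n$ vertices and is therefore a permutation of $[n]$. If $w$ is $n$-complete, then by definition it contains every permutation of $[n]$ as a subsequence, and in particular it contains a topological sort of $G$; Lemma~\ref{lem:acyclic1} then guarantees that $w$ fixes $f$. As $f$ was arbitrary in $F_A(n)$, the word $w$ fixes the whole family.

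The length statement is then immediate: fixing $F_A(n)$ is equivalent to being $n$-complete, so a shortest word fixing $F_A(n)$ has length exactly $\lambda(n)$, giving $\lambda_A(n)=\lambda(n)$. There is no genuine obstacle here, as the statement is essentially a repackaging of Lemmas~\ref{lem:acyclic1} and~\ref{lem:acyclic3}. The only point deserving care is the observation that topological sorts of an acyclic graph on $[n]$ are exactly permutations of $[n]$, since this is what makes the $n$-completeness condition coincide with the fixing condition uniformly across the entire acyclic family rather than just for a single network.
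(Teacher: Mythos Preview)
Your proof is correct and matches the paper's approach exactly: the paper states the proposition as an ``immediate consequence'' of Lemma~\ref{lem:acyclic3} without writing out a proof, and your argument simply fills in the one-line sandwich $F_P(n)\subseteq F_A(n)$ together with the appeal to Lemma~\ref{lem:acyclic1} (which the paper already invoked inside the proof of Lemma~\ref{lem:acyclic3} when noting that any $n$-complete word fixes $F_A(n)$).
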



\begin{remark}
By Lemma \ref{lem:acyclic3} and Proposition \ref{prop:FA}, it is as hard to fix $F_P(n)$ as to fix $F_A(n)$: these two families have the same quadratic fixing length, while $F_P(n)$ is much smaller than $F_A(n)$ (the former has $n!$ members while the latter has $2^{\Theta(2^n)}$ members). We shall use this to our advantage when designing a monotone network with quadratic fixing length in Section \ref{sec:one_monotone}.
\end{remark}

\subsection{Increasing networks}

\begin{definition}[Increasing networks]
An $n$-component network $f$ is \BF{increasing} if, 
\[
\forall x\in\B^n,\qquad x\leq f(x).
\]
The family of $n$-component increasing networks is denoted $F_I(n)$.
\end{definition}

We prove below that, as for path networks and acyclic networks, the fixing length of increasing networks is $\lambda(n)$. Increasing networks are thus relatively easy to fix collectively. We shall use this fact when constructing a cubic word fixing all monotone networks in Section \ref{sec:all_monotone}.

\begin{lemma}\label{lem:increasing1}
Let $f$ be an $n$-component network and $x\in\{0,1\}^n$. If $f^u(x)\leq f^{uv}(x)$ for any words $u$ and $v$, then $f^w(x)$ is a fixed point of $f$ for any word $w$ containing all the permutations of $\{i:x_i=0\}$. Similarly, if $f^u(x)\geq f^{uv}(x)$ for any words $u$ and $v$, then $f^w(x)$ is a fixed point of $f$ for any word $w$ containing all the permutations of $\{i:x_i=1\}$. 
\end{lemma}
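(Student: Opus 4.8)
Looking at this lemma, I need to prove two symmetric statements about when $f^w(x)$ becomes a fixed point. Let me think about the structure.

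The key insight is monotonicity of the orbit: if $f^u(x) \leq f^{uv}(x)$ for all words $u,v$, then the sequence of states only increases (in the partial order) as we apply more letters. This means coordinates that start at $1$ stay at $1$ (since the state can only increase, a coordinate can flip from $0$ to $1$ but never back). So only the coordinates where $x_i = 0$ can possibly change, and each such coordinate, once it flips to $1$, stays there.

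Let me sketch the plan focusing on why containing all permutations of the zero-set suffices.

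=== PROOF PROPOSAL ===

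The plan is to prove the first statement; the second follows by the obvious symmetry (exchanging the roles of $0$ and $1$, and reversing the order $\leq$). So assume $f^u(x)\leq f^{uv}(x)$ for all words $u,v$, and let $Z=\{i:x_i=0\}$. The governing observation is that the orbit of $x$ is monotonically nondecreasing: applying any letter can only raise the state in the order $\leq$. Consequently a coordinate that already equals $1$ can never return to $0$, so every coordinate outside $Z$ stays fixed at $1$ throughout, and the only components that may ever change are those in $Z$, each of which can switch from $0$ to $1$ at most once and then remains at $1$.

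Now let $w$ be any word containing all permutations of $Z$ as subsequences, and write $y=f^w(x)$. First I would argue that $y$ is reached ``in the middle'' of $w$, in the sense that no further change occurs once all of $Z$ has been processed in some order; more precisely, I claim $y$ is a fixed point. The natural approach mirrors the induction in Lemma~\ref{lem:acyclic1}: I would show that the coordinates of $Z$ stabilize one after another in a suitable order. Suppose for contradiction that $y$ is not a fixed point, so $f_i(y)\neq y_i$ for some $i$. Since the orbit is nondecreasing and $y$ is the terminal state, monotonicity forces $f_i(y)=1$ and $y_i=0$, hence $i\in Z$ and coordinate $i$ never flipped during $w$.

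The heart of the argument is to derive a contradiction by exhibiting, among the permutations of $Z$ that $w$ contains, one whose letters are applied in exactly the order that would have forced every coordinate of $Z$ to flip. Here I would use that whether coordinate $i$ flips depends only on the current state, and that by monotonicity the ``potential'' for a flip at $i$ can only be created, never destroyed, once the relevant in-neighbours have reached their final values. Let $S\subseteq Z$ be the set of coordinates that are $1$ in $y$; by assumption $S\neq Z$. Choose a permutation of $Z$ listing the coordinates of $Z\setminus S$ last, in an order respecting the dependencies that keep them at $0$ — the key point being that since these coordinates stay at $0$ in $y$ while $f_i(y)=1$ for at least one of them, processing that coordinate as a letter after the state has stabilized would flip it, contradicting that $w$ contains this permutation yet leaves $i$ at $0$. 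I expect the main obstacle to be making this flipping argument fully rigorous: unlike the acyclic case there is no fixed topological order, so I must argue that among the exponentially many orders realized as subsequences of $w$, at least one is incompatible with $i$ remaining unflipped, and this requires carefully tracking which coordinates have stabilized at each point. Formalizing that ``some permutation forces the flip'' is where the completeness hypothesis on $w$ is genuinely used, and where I would spend most of the care.
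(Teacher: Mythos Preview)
Your setup is right and matches the paper's: the orbit $x=x^0\leq x^1\leq\dots\leq x^p=y$ is nondecreasing, only coordinates in $Z=\{i:x_i=0\}$ can change, each at most once, and if $y$ is not fixed there is some $j\in Z$ with $y_j=0<f_j(y)$. The gap is exactly where you locate it, but the resolution is more concrete and simpler than you expect. You should not be hunting for a permutation that ``respects dependencies''; the right permutation is handed to you by the run of $w$ itself. Let $t_1<\dots<t_m$ be the positions at which the state strictly increases and set $i_k:=w_{t_k}$, the coordinate that flips at step $t_k$. The permutation to feed into $Z$-completeness begins $i_1i_2\dots i_m\,j$ (extended arbitrarily to all of $Z$).

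The decisive observation you are missing is a ``first-occurrence'' property. Between consecutive flip times the state is constant, equal to $x^{t_{k-1}}$ on $[t_{k-1},t_k-1]$, and applying letter $i_k$ to that state would change it; hence $i_k$ cannot occur in $w$ on that interval, i.e.\ $t_k$ is the \emph{first} occurrence of $i_k$ in $w_{[t_{k-1},p]}$. Likewise the state equals $y$ on $[t_m,p]$ and applying $j$ would change it, so $j$ does not occur in $w_{[t_m,p]}$. Now by $Z$-completeness the word $i_1\dots i_m j$ sits in $w$ at positions $s_1<\dots<s_{m+1}$; a one-line induction using the first-occurrence property gives $s_k\geq t_k$ for all $k\leq m$, whence $s_{m+1}>t_m$ and $j$ \emph{does} occur after $t_m$ --- contradiction. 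No dependency analysis or topological-order reasoning is needed; the argument is purely positional in $w$, and your instinct to import the structure of Lemma~\ref{lem:acyclic1} is what is leading you astray.
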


\begin{proof}
Suppose that $f^u(x)\leq f^{uv}(x)$ for any words $u$ and $v$, and that $w=w_1w_2\dots w_p$ be $S$-complete, with $S:=\{i:x_i=0\}$. Let $x^0:=x$ and $x^q:=f^{w_q}(x^{q-1})$ for all $q\in [p]$. By hypothesis, $x^0\leq x^1\leq\dots \leq x^q$. Suppose for the sake of contradiction that $x^p=f^w(x)$ is not a fixed point, i.e. there is $j$ such that $f_j(x^p)> x^p_j$. Let $t_1,\dots,t_m\in [p]$ be the set of positions such that $x^{t_k-1}<x^{t_k}$, and let $i_k:=w_{t_k}$ for all $k\in [m]$. Clearly, $j\neq i_k$ for every $k\in [m]$. Setting $t_0:=0$, we have $x^{t_{k-1}}=x^{t_k-1}$, thus $f^{i_k}(x^q)=f^{i_k}(x^{t_k-1})=x^{t_k}>x^{t_k-1}=x^q$ for all $t_{k-1}\leq q<t_k$. We deduce that $i_k$ does not appear in $w_{[t_{k-1},t_k-1]}$ or, equivalently, $t_k$ is the first position of $i_k$ in $w_{[t_{k-1},p]}$. Similarly, we have $x^{t_m}=x^p$, thus $f_j(x^q)=f_j(x^p)>x^p_j=x^q_j$ for all $t_m\leq q\leq p$ and we deduce that $j$ does not appear in $w_{[t_m,p]}$.  Since $w$ is $S$-complete, the sequence $i_1i_2 \dots i_m j$ appears in $w$, say at positions $w_{s_1}w_{s_2}\dots w_{s_m}w_{s_{m+1}}$. Since $t_k$ is the first position of $i_k$ in $w_{[t_{k-1},p]}$, we have $s_k\geq t_k$ for all $k\in [m]$. In particular, $s_m\geq t_m$, thus $j$ appears in $w_{[t_m,p]}$ which is the desired contradiction. If $f^u(x)\geq f^{uv}(x)$ for any words $u$ and $v$ the proof is similar. 
\end{proof}

\begin{proposition}\label{pro:increasing2}
	A word $w$ fixes $F_I(n)$ if and only if it is $n$-complete. Hence $\lambda_I(n) = \lambda(n)$.
\end{proposition}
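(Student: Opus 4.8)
The plan is to prove both implications of the equivalence; the equality $\lambda_I(n)=\lambda(n)$ then follows immediately, since by definition $\lambda(n)$ is the length of a shortest $n$-complete word.

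For the direction ``$n$-complete implies fixes $F_I(n)$,'' I would first observe that every increasing network updates monotonically: if $f\in F_I(n)$ then $x\le f(x)$, so each letter action $f^i$ can only turn a $0$ into a $1$, whence $f^u(x)\le f^{uv}(x)$ for all words $u,v$ and all states $x$. This is exactly the hypothesis of the first half of Lemma~\ref{lem:increasing1}. It then remains to check that an $n$-complete word $w$ contains, as a subsequence, all permutations of the set $S=\{i:x_i=0\}$, for every state $x$: given a permutation $\sigma$ of $S$, extend it to a permutation of $[n]$ by appending the elements of $[n]\setminus S$ in any order; this extension is a subsequence of $w$, hence so is its prefix $\sigma$. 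Lemma~\ref{lem:increasing1} then gives that $f^w(x)$ is a fixed point for every $x$, so $w$ fixes $f$; as $f\in F_I(n)$ was arbitrary, $w$ fixes $F_I(n)$.

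For the converse, suppose $w$ fixes $F_I(n)$; I must show $w$ is $n$-complete, i.e. contains every permutation $\pi=i_1i_2\dots i_n$ of $[n]$ as a subsequence. The idea is to build, for each $\pi$, an increasing network whose fixing from $\ZERO$ forces the components to be switched on in the order $\pi$. Concretely I would set $f_{i_1}(x)=1$ and, for $2\le k\le n$,
\[
f_{i_k}(x)=x_{i_k}\lor\bigl(x_{i_1}\land x_{i_2}\land\dots\land x_{i_{k-1}}\bigr).
\]
Each coordinate satisfies $f_i(x)\ge x_i$, so $f\in F_I(n)$. A short check shows $\ONE$ is its unique fixed point: $x_{i_1}=1$ is forced, and the fixed-point equation for $i_k$ reads $x_{i_k}=x_{i_k}\lor\bigwedge_{l<k}x_{i_l}$, so $x_{i_1}=\dots=x_{i_{k-1}}=1$ forces $x_{i_k}=1$, propagating along $\pi$ to give $x=\ONE$.

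Finally I would run $w$ from $\ZERO$. Since $f$ is increasing, once a coordinate becomes $1$ it stays $1$, and since $w$ fixes $f$ the end state is $\ONE$; thus for each $j$ there is a well-defined first time $t_j$ at which coordinate $j$ flips to $1$, and necessarily $w_{t_j}=j$. By the definition of $f_{i_k}$, coordinate $i_k$ can flip to $1$ only once $i_1,\dots,i_{k-1}$ are all already $1$, which yields $t_{i_1}<t_{i_2}<\dots<t_{i_n}$; reading off $w$ at these positions exhibits $\pi=i_1i_2\dots i_n$ as a subsequence of $w$. As $\pi$ was arbitrary, $w$ is $n$-complete. I expect the only point requiring real care to be this last forcing argument: verifying that in the increasing dynamics no coordinate $i_k$ can be activated prematurely, so that the activation times are strictly ordered along $\pi$. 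The rest is routine once the gadget network above is in place.
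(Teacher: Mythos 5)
Your proof is correct and takes essentially the same route as the paper: the forward direction invokes Lemma~\ref{lem:increasing1} exactly as the paper does (after the same routine verification that increasing networks update monotonically and that an $n$-complete word contains all permutations of every subset), and the converse builds, for each permutation $\pi$, an increasing gadget whose asynchronous dynamics from $\ZERO$ forces the coordinates to switch on in the order $\pi$, so that any fixing word must contain $\pi$ as a subsequence. The only difference is cosmetic: the paper's gadget is a chain network from $\ZERO$ to $\ONE$ that is the identity off the chain, while yours is the monotone formula $f_{i_k}(x)=x_{i_k}\lor\bigwedge_{l<k}x_{i_l}$ with $\ONE$ as its unique global fixed point; both enforce the same activation order and yield the same conclusion.
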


\begin{proof}
If $w$ is $n$-complete, then $w$ fixes $f$ by Lemma~\ref{lem:increasing1}. Conversely, suppose that $w$ fixes all $n$-component increasing networks and let $i_1i_2\dots i_n$ be any permutation of $[n]$. Let $y^0:=\ZERO$ and  $y^k := y^{k-1} + e_{i_k}$ for all $k\in [n]$. Then $y^0 y^1 \dots y^n$ is a chain from $\ZERO$ to $\ONE$ in the hypercube $Q_n$. Let $f$ be the $n$-component increasing network defined by
	$$
	f(x) := \begin{cases}
	y^{k+1} &\text{if } x = y^k\text{ and $1\leq k<n$},\\
	x &\text{otherwise}.
	\end{cases}
	$$
Then $\ONE$ is the unique fixed point of $f$ reachable from $\ZERO$ in the asynchronous graph, and it is easy to check that $f^w(\ZERO)=\ONE$ if and only if $i_1i_2\dots i_n$ is a subsequence of $w$. Thus $w$ is $n$-complete. 
\end{proof}

On the other hand, some increasing networks have quadratic fixing length. The proof requires the machinery developed for monotone networks, and as such we delay its proof until Section~\ref{sec:one_monotone}. 

\begin{theorem} \label{th:lambda(f)increasing}
For any $\varepsilon > 0$ and $n$ sufficiently large, there exists $f \in F_I(n)$ such that 
\[
\lambda(f) \ge \left(\frac{1}{e} - \varepsilon\right) n^2.
\]
\end{theorem}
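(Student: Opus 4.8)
The plan is to turn a single increasing network into a gadget that encodes a shortest common supersequence instance for permutations, and then to read off the constant $1/e$ from the quantitative form of the supersequence bound behind Theorem~\ref{thm:main2}. The guiding observation, stressed in the remark following Proposition~\ref{prop:FA}, is that one network on $n$ coordinates cannot force all $n!$ permutations at once — there are only $2^n<n!$ states available to play the role of distinct hard initial conditions — so the lower bound must instead be extracted from a much smaller, but still supersequence-hard, family of permutations, and transported to $\lambda(f)$ almost losslessly.

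Concretely, I would partition $[n]$ into a work set $W$ with $|W|=m$ and a label set $L=[n]\setminus W$. The label coordinates are frozen by setting $f_j(x)=x_j$ for $j\in L$, which is increasing. Given a family $P$ of permutations of $W$ with $\log_2|P|\le n-m$, I fix an injection $\pi\mapsto x^\pi$ sending each $\pi\in P$ to a distinct label pattern, and for $i\in W$ I set
\[
f_i(x)=x_i\vee\bigwedge_{i'<_{\pi} i}x_{i'},
\]
where $\pi$ is the permutation of $W$ decoded from the label pattern $x_L$ (defined arbitrarily, say as the identity, on patterns outside the image of the injection), $<_\pi$ is the order in which $\pi$ lists $W$, and the empty conjunction (for the first element of $\pi$) equals $1$. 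Since $f_i(x)\ge x_i$ the network is increasing, so every update only turns $0$s into $1$s and no coordinate is ever reset. The point to check is that from the state $x^\pi$ with label pattern encoding $\pi$ and all of $W$ set to $\ZERO$, the frozen label keeps the decoded permutation equal to $\pi$ for the whole trajectory; consequently the coordinates of $W$ become switchable only in the order dictated by $\pi$, while updating a work coordinate prematurely, or any label coordinate, does nothing. Hence $w$ drives $x^\pi$ to a fixed point if and only if $\pi$ is a subsequence of $w$. As a fixing word must do this for all $\pi\in P$ simultaneously, every word fixing $f$ is a common supersequence of $P$, which gives $\lambda(f)\ge \mathrm{scs}(P)$; and $f$ is fixable by Proposition~\ref{pro:increasing2}.

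It then remains to choose $m$ and $P$ so that $\mathrm{scs}(P)\ge(1/e-\varepsilon)n^2$. Because the family can be taken of size $2^{o(m)}$, only $o(m)$ label coordinates are needed, so I may take $m=n-o(n)$; the reduction is then essentially lossless and the whole problem collapses to producing a family of $2^{o(m)}$ permutations of $[m]$ whose common supersequences all have length at least $(1/e-\varepsilon)m^2$. This last step is exactly where I expect the difficulty to lie, and it is the reason the proof is deferred to Section~\ref{sec:monotone}: a naive distinct-subsequence count is far too weak for families this small (it yields only linear bounds), so the sharp constant has to come from the Baranyai-based construction underlying Theorem~\ref{thm:main2}, refined to track the leading coefficient rather than merely the order of magnitude. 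Granting that quantitative supersequence bound with constant $1/e-\varepsilon$, feeding the resulting family into the gadget above with $|W|=n-o(n)$ produces an increasing $f\in F_I(n)$ with $\lambda(f)\ge(1/e-\varepsilon)n^2$, as required.
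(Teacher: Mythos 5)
Your proposal is correct and follows essentially the same route as the paper: it packs a subexponential, supersequence-hard family of permutations (the quantitative, constant-$\frac{1}{e}$ form of Theorem~\ref{thm:main2}, i.e.\ Theorem~\ref{th:resupersequence}) into a single increasing network via frozen label coordinates that select which permutation-gadget governs the work coordinates, so that any fixing word must contain every permutation of the family as a subsequence, and then chooses $m=(1-o(1))n$ exactly as the paper does. The only differences are cosmetic: your gadget $f_i(x)=x_i\vee\bigwedge_{i'<_{\pi}i}x_{i'}$ plays the role of the paper's chain networks $h^k$, and your injective labeling replaces the paper's constant-weight surjection, which the paper keeps only to recycle its monotone construction.
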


\begin{remark}
A simple exercise shows that the number of increasing networks is doubly exponential: $|F_I(n)| = 2^{n2^{n-1}}$. It is then remarkable that, while some increasing networks have quadratic fixing length, all the increasing networks can be fixed together in quadratic time still. 
\end{remark}

\begin{remark}
The dual $\tilde{f}$ of a network $f$ is defined as $\tilde{f}(x) = f(x+\ONE)+\ONE$. It is easily checked that a word fixes $f$ if and only if it fixes $\tilde{f}$. Since a network is increasing if and only if its dual is decreasing, i.e. $x \ge f(x)$ for all $x$, the above results also holds for decreasing networks.
\end{remark}

\section{Monotone networks} \label{sec:monotone}
 
\subsection{A monotone network with quadratic fixing length} \label{sec:one_monotone}

The aim of this section is to exhibit a monotone network with quadratic fixing length. As we saw in Section \ref{sec:acyclic}, the family of path networks $F_P(n)$ has quadratic fixing length. Therefore, our strategy is to ``pack'' many of these path networks in the same network $f$. As an illustration of this strategy, we first describe a monotone network with fixing length of order $(n/\log n)^2$.

Let $n = m + r$ where $m! \le \binom{r}{r/2}$, and let us write $F_P(m)=\{h^1,\dots,h^{m!}\}$. There is then a surjection $\phi:X\to [m!]$ where $X$  is the set of states in $\{0,1\}^r$ with Hamming weight $r/2$. The $n$-component network $f$ then views the $r$ last components as controls, that decide, through $\phi$, which network in $F_P(m)$ to choose on the first $m$ components. More precisely, by identifying $\{0,1\}^n$ with $\{0,1\}^m\times \{0,1\}^r$, we define $f$ as follows:
$$
	f(x,y) := \begin{cases}
		(\ONE,y) &\text{if } \wH(y) > r/2,\\
		(h^{\phi(y)}(x),y)&\text{if }\wH(y) = r/2,\\
		(\ZERO,y) &\text{if }\wH(y) < r/2.
	\end{cases}
$$
The first and third cases are there to guarantee that $f$ is indeed monotone. Since any network in $F_P(m)$ can appear, a word fixing $f$ must fix $F_P(m)$. Thus a word fixing $f$ is $m$-complete, and hence has length $\Omega(m^2)$. Choosing $m = \Omega(n/ \log n)$ then yields $\Omega(n^2/ \log^2 n)$.

The network above reached a fixing length of $\Omega(m^2)$ because it packed all possible networks in $F_P(m)$. However, it did not reach quadratic fixing length because $m$ had to be $o(n)$ in order to embed all $m!$ networks of $F_P(m)$ in $X$. Thus, we show below that only a subexponential subset of $F_P(m)$ is required to guarantee $\Omega(m^2)$. This is equivalent to prove that there exists a subexponential set of permutations of $[m]$ such that any word containing these permutations as subsequences is of length $\Omega(m^2)$. In that case, we can use $m = (1-o(1))n$, and hence reach a fixing length of $\Omega(n^2)$. 

The main tool is Baranyai's theorem, see \cite{vW01}. 

\begin{theorem}[Baranyai]
If $a$ divides $n$, then there exists a collection of ${n\choose a}\frac{a}{n}$ partitions of $[n]$ into $\frac{n}{a}$ sets of size $a$ such that each $a$-subset of $[n]$ appears in exactly one partition.
\end{theorem}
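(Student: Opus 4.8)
The plan is to prove Baranyai's theorem by induction, revealing the ground set one point at a time. Write $r=\binom{n-1}{a-1}=\binom{n}{a}\frac{a}{n}$ for the target number of partitions. I would maintain, for each $m=0,1,\dots,n$, a family of exactly $r$ \emph{classes}, where each class is a partition of $[m]$ into exactly $n/a$ parts (parts being allowed to be empty, or of size $<a$, at intermediate stages), subject to the single invariant: for every $B\subseteq[m]$, the total number of parts equal to $B$, counted over all $r$ classes, is exactly $\binom{n-m}{a-|B|}$. The idea behind this count is that a part $B$ of size $j$ at stage $m$ will eventually be completed to an $a$-set by adjoining $a-j$ of the remaining $n-m$ points, and there are $\binom{n-m}{a-j}$ ways to do so.

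The bookkeeping checks out at both ends. At $m=0$ the only subset is $\emptyset$, which must occur $\binom{n}{a}$ times; since each class consists of $n/a$ empty parts, the empty part occurs $r\cdot n/a=\binom{n}{a}$ times, as required. At $m=n$ the invariant forces every $a$-subset to occur exactly $\binom{0}{0}=1$ time and every smaller subset to occur $\binom{0}{a-|B|}=0$ times, so each of the $r$ classes is a partition of $[n]$ into $n/a$ sets of size $a$ and each $a$-subset appears in exactly one class, which is the assertion. The total number of parts is consistent throughout by Vandermonde: $\sum_{j}\binom{m}{j}\binom{n-m}{a-j}=\binom{n}{a}=r\cdot n/a$.

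For the inductive step I would pass from $[m]$ to $[m+1]$ by deciding, in each class, which single part receives the new point $m+1$ (adjoining it to one part keeps the class a partition of $[m+1]$ into $n/a$ parts). Pascal's rule dictates exactly how many copies of each set must be promoted: among the $\binom{n-m}{a-|B|}$ copies of $B\subseteq[m]$, I must promote $\binom{n-m-1}{a-|B|-1}$ of them to $B\cup\{m+1\}$ and keep the remaining $\binom{n-m-1}{a-|B|}$ unchanged, so that the invariant holds at stage $m+1$. I would encode the promotion choices as a flow in a network with a source $s$, a node for each class, a node $v_B$ for each $B\subseteq[m]$ with $|B|<a$, and a sink $t$: capacity-$1$ exact-demand edges $s\to(\text{class})$, edges $(\text{class})\to v_B$ with capacity equal to the number of parts equal to $B$ in that class, and exact-demand edges $v_B\to t$ of value $\binom{n-m-1}{a-|B|-1}$. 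A feasible \emph{fractional} flow is obtained by promoting each copy of $B$ with weight $\binom{n-m-1}{a-|B|-1}/\binom{n-m}{a-|B|}=(a-|B|)/(n-m)$; conservation at each class node then reduces to the identity $\sum_{P}(a-|P|)/(n-m)=1$, summed over the parts $P$ of the class, which holds because a class has $n/a$ parts partitioning $[m]$, whence $\sum_{P}(a-|P|)=(n/a)\cdot a-m=n-m$. The total flow $r$ leaving $s$ matches $\sum_{|B|<a}\binom{n-m-1}{a-|B|-1}=\binom{n-1}{a-1}$ arriving at $t$, again by Vandermonde.

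The crux, and the step I expect to be the only real obstacle, is passing from this fractional flow to an integral one: this is precisely where I would invoke the integrality theorem for network flows (equivalently, total unimodularity of the arc–node incidence matrix), which guarantees that a network with integer capacities and integer demands admitting any feasible flow admits an \emph{integral} feasible flow. Such an integral flow assigns, in each class, exactly one part to be promoted, with the correct global multiplicities, and it never promotes a part of size $a$ since the corresponding demand is $\binom{n-m-1}{-1}=0$. This is exactly the data needed to advance the invariant from $m$ to $m+1$, and iterating up to $m=n$ completes the proof; everything outside the integral-rounding step is Pascal/Vandermonde bookkeeping.
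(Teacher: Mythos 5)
Your proof is correct. The paper itself gives no proof of this statement---it quotes Baranyai's theorem as a known result from the reference it cites (van Lint and Wilson)---and your argument is precisely the classical proof found there: reveal the ground set one point at a time, maintain the invariant that each $B\subseteq[m]$ occurs $\binom{n-m}{a-|B|}$ times as a part across the $\binom{n-1}{a-1}$ partial partitions, verify feasibility of the uniform fractional promotion via Pascal/Vandermonde, and invoke integrality of network flows to make the promotions integral at each step.
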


\begin{lemma}
	Let $a$ and $b$ be positive integers, and $n=ab$. There exists a set of $a!{n\choose a}\leq n^a$ permutations of $[n]$ such any word containing all these permutations as subsequences is of length at least 
	\[
	\left(n^{-\frac{2b}{a}}\right)\frac{n(n-a)}{e}.
	\]
\end{lemma}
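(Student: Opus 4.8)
The plan is to use Baranyai's theorem to build the family of permutations economically, and then to run a counting argument in the spirit of the bound $|w|\ge n^2/e^2$ for $\lambda(n)$ recalled earlier, but sharpened using the block structure. For the construction: since $a\mid n$ with $n=ab$, apply Baranyai's theorem to get $\tfrac1b\binom na$ partitions of $[n]$ into $b$ blocks of size $a$, each $a$-subset being a block of exactly one partition. For every partition $\{B_1,\dots,B_b\}$, every block $B_i$, and every one of the $a!$ orderings of $B_i$, I would list one permutation of $[n]$ realizing that ordering on $B_i$, with the remaining coordinates filled canonically so as to respect the block order $B_1,\dots,B_b$. This gives $\tfrac1b\binom na\cdot b\cdot a!=a!\binom na=\tfrac{n!}{(n-a)!}=n(n-1)\cdots(n-a+1)\le n^a$ permutations. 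The first thing to check is that any common supersequence $w$ then restricts, on the positions carrying the letters of any fixed block $B$, to a $B$-complete word; since by Baranyai every $a$-subset occurs as a block, this forces $w|_A$ to be $A$-complete for \emph{every} $a$-subset $A$.

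For the lower bound, I would fix one Baranyai partition $\{B_1,\dots,B_b\}$: its blocks are disjoint and cover $[n]$, so the positions of $w$ split into $b$ groups of sizes $L_1,\dots,L_b$ with $\sum_i L_i=|w|$. The idea is to count the permutations of $[n]$ that $w$ is forced to contain as subsequences. On one hand, each such subsequence uses exactly $a$ of the positions in each group, so the number of them is at most $\prod_{i=1}^b\binom{L_i}a\le\binom{|w|/b}a^{\,b}$, where the inequality is the log-concavity of $t\mapsto\binom ta$ optimized under $\sum_iL_i=|w|$. On the other hand, the completeness forced on all the blocks, pooled over the $\tfrac1b\binom na$ partitions (whose forced permutations are kept distinct precisely because each $a$-subset is covered once), supplies a lower bound on the number forced. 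Playing the two bounds against each other and extracting $|w|$ through Stirling's approximations of $\binom na$, $a!$ and $n!$ should yield the stated estimate, the factor $\tfrac{n(n-a)}e$ arising from the falling factorial $\tfrac{n!}{(n-a)!}$ and the correction $n^{-2b/a}$ from the binomial/Stirling error terms.

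The hard part will be exactly this last step: converting the per-block completeness of $w$ into a lower bound on the number of distinct permutations $w$ must contain that is strong enough to push the counting up to order $n^2$. This is delicate because the two crude strategies both fall short — working inside a single partition only yields $\Theta(na)$, while the naive ``$w$ contains all $n!$ permutations'' estimate only yields $n^2/e^2$ — so the argument has to genuinely exploit that Baranyai covers every $a$-subset \emph{exactly} once, in order to count the forced permutations without over- or under-counting. A secondary, purely bookkeeping difficulty is the Stirling estimation required to land on the precise constant $\tfrac1e$ and the factor $n^{-2b/a}$, and the divisibility hypothesis $a\mid n$ must be tracked throughout so that Baranyai's theorem applies.
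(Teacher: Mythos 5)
Your construction is not the paper's, and unfortunately no argument can repair it, because the family of permutations you build does not satisfy the lemma's conclusion. Each of your permutations is canonical (increasing inside each block, blocks in a fixed order) except on a single block $B_m$; it is therefore a concatenation of $m-1$ increasing runs, then an arbitrary word of length $a$, then $b-m$ increasing runs, and hence a subsequence of $(12\cdots n)^{a+b-1}$ (one copy of $12\cdots n$ per increasing run, one copy per letter of the scrambled block). So your whole family admits a common supersequence of length $n(a+b-1)$. For the parameters with which the lemma is actually invoked in Theorem~\ref{th:resupersequence} ($a\approx n^{1/2+\varepsilon}$, $b\approx n^{1/2-\varepsilon}$, so that $n^{-2b/a}\to 1$), this is $O(n^{3/2+\varepsilon})$, far below the claimed lower bound $\sim n^2/e$. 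The underlying reason is the one you half-identify yourself: your family forces essentially nothing beyond ``$w|_A$ is $A$-complete for every $a$-subset $A$'', and that property is worth only about $b\cdot\lambda(a)\approx na$ positions.

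Your counting scheme is also quantitatively doomed, independently of the construction. Writing $N$ for the number of forced permutations, the comparison $\prod_{i}\binom{L_i}{a}\ge N$ gives, via AM--GM and $a!\ge(a/e)^a$, only $|w|\ge\frac{ab}{e}N^{1/(ab)}=\frac{n}{e}N^{1/n}$; since $N\le n^a$ we have $N^{1/n}\le n^{1/b}\to 1$, so this is linear in $n$. To reach $n^2$ by such a global count one would need $N\approx n^n$, i.e.\ essentially all $n!$ permutations, defeating the purpose of a $2^{o(n)}$-size family. The paper escapes both obstacles with two ideas missing from your proposal: (i) in each permutation \emph{every} block is scrambled, by the same pattern $\sigma^k$, and the blocks of each Baranyai partition are cyclically rotated, so that for each fixed position $\ell$ the $\ell$-th blocks of the $a!\binom{n}{a}$ permutations are pairwise distinct (indeed they range over \emph{all} ordered $a$-tuples of distinct elements); (ii) a pigeonhole over ``profiles'' --- the greedy decomposition of each permutation's embedding into $b$ consecutive windows of $w$, of which there are at most $n^{2b}$ --- which produces $s\ge a!\binom{n}{a}/n^{2b}$ permutations whose $\ell$-th blocks all embed in the \emph{same} window $w^\ell$. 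The count $\binom{|w^\ell|}{a}\ge s$ is then applied \emph{inside each window} (this is exactly where the pairwise distinctness from (i) is indispensable), giving $|w^\ell|\gtrsim\frac{a}{e}(n-a)\,n^{-2b/a}$, and summing over the $b$ disjoint windows yields the stated bound. It is this localization of the counting to windows, rather than any global count, that produces the factor $b\cdot a=n$ and hence the quadratic quantity $n(n-a)$.
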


\begin{proof}
According to Baranyai's theorem, there exists a collection of $r:=b^{-1}{n\choose a}$ partitions of $[n]$ into $b$ sets of size $a$, such that each $a$-subset of $[n]$ appears in exactly one partition. Let $A^0,\dots,A^{r-1}$ be these partitions. For each $0\leq i<r$, we set 
\[
A^i=\{A^i_0,\dots,A^i_{b-1}\}.
\]
Then, for all $0\leq i<r$ and $0\leq j,k<b$ we set $S^{i,j}_k := A^i_{j+k}$ and
	\[
	S^{i,j} := S^{i,j}_0S^{i,j}_1\dots S^{i,j}_{b-1} = A^i_{j+0}A^i_{j+1}\dots A^i_{j+b-1}
	\]
	where addition is modulo $b$. So, the $S^{i,j}$ form a set of ${n\choose a}$ {\em ordered} partitions of $[n]$ in $b$ sets of size $a$. The interesting point is that, for all fixed $i$ and fixed $\ell$, the sequence $S^{i,0}_\ell S^{i,1}_\ell\dots S^{i,b-1}_\ell$ is a permutation of $A^i$ (namely $S^{i,\ell}$). Since each $a$-subset of $[n]$ appears in exactly one $A^i$, we deduce that, for any fixed $\ell$, the set of $S^{i,j}_\ell$ is {\em exactly} the set of $a$-subsets of $[n]$.

	Given an $a$-subset $X$ of $[n]$ and a permutation $\sigma$ of $[a]$, we set $\sigma(X)=i_{\sigma(1)}i_{\sigma(2)}\dots i_{\sigma(a)}$, where $i_1,i_2,\dots,i_a$ is an enumeration of the elements of $X$ in the increasing order. Let $\sigma^0,\dots,\sigma^{a!-1}$ be an enumeration of the permutations of $[a]$. For all $0\leq i<r$, $0\leq j<b$, $0\leq k<a!$, we set 
	\[
	\pi^{i,j,k}:=\sigma^k(S^{i,j}_0)\dots\sigma^k(S^{i,j}_{b-1}). 
	\]
	The $\pi^{i,j,k}$ form a collection of $a!{n\choose a}$ permutations of $[n]$. The interesting property is that, for $\ell$ fixed, the set of $\sigma^k(S^{i,j}_\ell)$ is {\em exactly} the set of words in $[n]^a$ without repetition, simply because, for $\ell$ fixed, the set of $S^{i,j}_\ell$ is exactly the set of $a$-subsets of $[n]$, as mentioned above. In particular, for $\ell$ fixed, the $\sigma^k(S^{i,j}_\ell)$ are pairwise distinct.

	Let $w=w_1w_2\dots w_p$ be a shortest word containing all the permutations $\pi^{i,j,k}$ as subsequences. We know that $|w|\leq\lambda(n)\leq n^2$. Let 
	\[
	\gamma^{i,j,k}:=\gamma^{i,j,k}_0\gamma^{i,j,k}_1\dots\gamma^{i,j,k}_b
	\]
	be the {\em profile} of $\pi^{i,j,k}$, defined recursively as follows: $\gamma^{i,j,k}_0:=0$ and, for all $0\leq \ell <b$, $\gamma^{i,j,k}_{\ell+1}$ is the smallest integer such that $\sigma^k(S^{i,j}_\ell)$ is a subsequence of the factor 
	\[
	w_{[\gamma^{i,j,k}_\ell+1,\gamma^{i,j,k}_{\ell+1}]}.
	\]
Since $\gamma^{i,j,k}_0=0$ and $1\leq \gamma^{i,j,k}_\ell\leq n^2$ for all $1\leq \ell\leq b$, there are at most $n^{2b}$ possible profiles. Thus there exist at least 
	\[
	s\geq \frac{a!{n\choose a}}{n^{2b}}
	\]
	permutations $\pi^{i,j,k}$ with the same profile. Let $\pi^{i_1,j_1,k_1},\dots,\pi^{i_s,j_s,k_s}$ be these permutations, and let $\gamma=(\gamma_0,\gamma_1,\dots,\gamma_b)$ be their profile. For all $0\leq \ell<b$, let 
	\[
	w^\ell:=w_{[\gamma_\ell+1,\gamma_{\ell+1}]}.
	\]
	By construction, $w^\ell$ contains, as subsequences, each of $\sigma^{k_1}(S^{i_1,j_1}_\ell),\dots,\sigma^{k_s}(S^{i_s,j_s}_\ell)$. Since these $s$ elements of $[n]^a$ are pairwise distinct (because, for fixed $\ell$, all the $\sigma^k(S^{i,j}_\ell)$ are pairwise distinct), this means that $w^\ell$ contains at least $s$ distinct subsequences of length $a$, and thus 
	\[
	{|w^\ell|\choose a}\geq s. 
	\] 
	We deduce 
	\[
	\frac{|w^\ell|^a}{a!}\geq {|w^\ell|\choose a}\geq s\geq \frac{a!{n\choose a}}{n^{2b}}\geq \frac{(n-a)^a}{n^{2b}}
	\]
	and thus
	\[
	|w^\ell|^a\geq a!\frac{(n-a)^a}{n^{2b}}\geq \left(\frac{a}{e}\right)^a\frac{(n-a)^a}{n^{2b}}\geq \left[\frac{a(n-a)}{en^{\frac{2b}{a}}}\right]^a.
	\]
	Consequently, 
	\[
	|w|\geq \sum_{0\leq\ell<b} |w^\ell|\geq  b\cdot \frac{a(n-a)}{en^{\frac{2b}{a}}}=
	\left(n^{-\frac{2b}{a}}\right)\frac{n(n-a)}{e}.
	\]
	
\end{proof}

We are now in position to prove that there is a subexponential set of permutations that requires a quadratic length to be represented in a supersequence. This is a quantitative version of Theorem~\ref{thm:main2} stated in the introduction. 

\begin{theorem}\label{th:resupersequence}
For any $\varepsilon>0$ and $n$ sufficiently large, there is a set of at most $n^{n^{\frac{1}{2}+\varepsilon}}$ permutations~of~$[n]$ such that any word containing all these permutations as subsequences is of length at least $(\frac{1}{e}-\varepsilon)n^2$. 
\end{theorem}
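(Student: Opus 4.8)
The plan is to derive this essentially immediately from the preceding lemma, by optimising the two free parameters $a,b$ and dealing with the divisibility constraint $n=ab$ through a padding argument.

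First I would reduce to the case of small $\varepsilon$. It suffices to prove the statement with some $\varepsilon_0<\varepsilon$ in place of $\varepsilon$, since the resulting set has size at most $n^{n^{1/2+\varepsilon_0}}\le n^{n^{1/2+\varepsilon}}$ and forces a supersequence of length at least $(\tfrac1e-\varepsilon_0)n^2\ge(\tfrac1e-\varepsilon)n^2$; the very same set then witnesses the claim for $\varepsilon$. Hence I may assume without loss of generality that $0<\varepsilon<1$. The lemma gives, for $n=ab$, a set of at most $n^a$ permutations forcing length at least $(n^{-2b/a})\frac{n(n-a)}{e}$. To meet the count bound I need $a\le n^{1/2+\varepsilon}$; to make the prefactor $n^{-2b/a}=n^{-2n/a^2}$ tend to $1$ I need $a^2\gg n\log n$, i.e. $a$ comfortably above $\sqrt n$; and to make $\frac{n(n-a)}{e}\sim\frac{n^2}{e}$ I need $a=o(n)$. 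The sweet spot is $a:=\lceil n^{1/2+\varepsilon/2}\rceil$: then $a\le n^{1/2+\varepsilon}$ for $n$ large, $a^2\approx n^{1+\varepsilon}$ so $2b/a\le 2n/a^2\approx 2n^{-\varepsilon}\to 0$ and thus $n^{-2b/a}\to 1$, while $a=o(n)$ because $\varepsilon<1$.

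Finally, divisibility. Since a general $n$ need not be a multiple of $a$, I would set $b:=\lfloor n/a\rfloor$ and $n':=ab\le n$, so that $n-n'<a=o(n)$ and hence $n'=(1-o(1))n$. Applying the lemma to $[n']$ produces a set $P$ of at most $(n')^a\le n^a$ permutations of $[n']$; I then pad each $\pi\in P$ to a permutation $\hat\pi:=\pi\,(n'+1)(n'+2)\cdots n$ of $[n]$, and take $\hat P=\{\hat\pi:\pi\in P\}$. Any word $w$ over $[n]$ containing every $\hat\pi$ as a subsequence contains, in particular, every $\pi$ as a subsequence, since deleting the common suffix from an embedding of $\hat\pi$ leaves an embedding of $\pi$. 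The lemma applied to $n'$ therefore yields $|w|\ge (n')^{-2b/a}\frac{n'(n'-a)}{e}=(1-o(1))\frac{n^2}{e}$, which exceeds $(\tfrac1e-\varepsilon)n^2$ for $n$ large, while $|\hat P|=|P|\le n^a\le n^{n^{1/2+\varepsilon}}$.

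The only genuinely delicate point is the calibration of $a$: it must sit strictly between $\sqrt n$ (up to the logarithmic factor, so that the $n^{-2b/a}$ loss is negligible) and $n^{1/2+\varepsilon}$ (so the permutation count stays within budget), and the width $n^{\varepsilon/2}$ of this window is exactly what the exponent $\tfrac12+\varepsilon$ in the statement buys. Everything else is routine: the padding preserves the subsequence obstruction because the appended suffix is common to all the permutations, and the $(1-o(1))$ factors coming from $n'\approx n$ and from $n^{-2b/a}\approx 1$ are absorbed into the slack $\varepsilon$.
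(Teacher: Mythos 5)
Your proposal is correct and follows essentially the same route as the paper: apply the preceding lemma with $a\approx n^{1/2+\Theta(\varepsilon)}$ and $b\approx n^{1/2-\Theta(\varepsilon)}$ (the paper takes $a=\lfloor n^{1/2+\varepsilon}\rfloor$, $b=\lfloor n^{1/2-\varepsilon}\rfloor$, $m=ab$, while you take $a=\lceil n^{1/2+\varepsilon/2}\rceil$, $b=\lfloor n/a\rfloor$), then extend the resulting permutations of the slightly smaller ground set to permutations of $[n]$, exactly as the paper does with its $\tilde\pi^i$. The only differences are cosmetic: your choice $b=\lfloor n/a\rfloor$ handles divisibility slightly more directly, and your padding by an explicit common suffix is a special case of the paper's arbitrary extension.
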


\begin{proof}
	Let $\varepsilon>0$ be arbitrarily small. Let $n$ be a positive integer, $a:=\lfloor n^{\frac{1}{2}+\varepsilon}\rfloor$,  $b:=\lfloor n^{\frac{1}{2}-\varepsilon}\rfloor$ and $m:=ab$. By the preceding lemma, there exist $s:=a!{m\choose a}\leq m^a\leq n^{n^{\frac{1}{2}+\varepsilon}}$ permutations $\pi^1,\dots,\pi^s$ of $[m]$ such that if $w$ is any word containing all the $\pi^i$ as subsequences then 
	\[
	|w|\geq 
	\left(m^{-\frac{2b}{a}}\right)\frac{m(m-a)}{e}\geq 
	\left(n^{-\frac{2b}{a}}\right)\frac{m(m-a)}{e}.
	\]
	First, $n^{-2ba^{-1}}\to 1$ as $n\to\infty$. Second, since $m=n+o(n)$, we have $m(m-a)=n^2-o(n^2)$. From these two observations we deduce that if $n$ is at least some constant $n_0$ that only depends on $\varepsilon$  then  
	\[
	|w|\geq(1-\varepsilon)\frac{n^2}{e}\geq (\frac{1}{e}-\varepsilon)n^2.
	\]
	For each $i\in [\ell]$, let $\tilde \pi^i$ be a permutation of $[n]$ that contains $\pi^{i}$ as a  subsequence. Then any word $\tilde w$ containing all the $\tilde \pi^i$ also contains all the $\pi^i$, so that $|\tilde w|\geq (\frac{1}{e}-\varepsilon)n^2$ if $n\geq n_0$. 
\end{proof}

Implementing the ``packing'' strategy described above, we obtain a monotone network with quadratic fixing length. This is a quantitative version of Theorem~\ref{thm:main1} stated in the introduction. 

\begin{theorem} \label{th:lambda(f)}
For any $\varepsilon > 0$ and $n$ sufficiently large, there exists $f \in F_M(n)$ such that 
\[
\lambda(f) \ge \left(\frac{1}{e} - \varepsilon\right) n^2.
\]
\end{theorem}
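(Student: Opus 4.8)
The plan is to implement the ``packing'' strategy sketched at the start of Section~\ref{sec:one_monotone}, but using the subexponential set of permutations from Theorem~\ref{th:resupersequence} in place of the full set $F_P(m)$. The earlier illustrative construction was forced to take $m=O(n/\log n)$ because embedding all $m!$ path networks required the control space $X$ (states of Hamming weight $r/2$ among $r$ components) to satisfy $m!\le\binom{r}{r/2}$. Now I only need to embed $s\le n^{n^{1/2+\varepsilon}}=2^{o(n)}$ specific path networks, so the control budget is tiny and $m$ can be taken to be $(1-o(1))n$.

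Concretely, I would fix $\varepsilon>0$ and invoke Theorem~\ref{th:resupersequence} to obtain, for $m$ large, a set of $s\le m^{m^{1/2+\varepsilon}}$ permutations $\tilde\pi^1,\dots,\tilde\pi^s$ of $[m]$ such that any word containing all of them as subsequences has length at least $(\tfrac1e-\varepsilon)m^2$. Let $h^1,\dots,h^s\in F_P(m)$ be the path networks corresponding to these permutations under the natural bijection between permutations of $[m]$ and $F_P(m)$ noted before Lemma~\ref{lem:acyclic3}. I then set $r$ to be the smallest even integer with $\binom{r}{r/2}\ge s$; since $\binom{r}{r/2}=\Theta(2^r/\sqrt r)$ and $s=2^{o(m)}$, this forces $r=o(m)$, hence $r=o(n)$ with $n=m+r=(1+o(1))m$. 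Choose a surjection $\phi$ from the weight-$r/2$ states in $\{0,1\}^r$ onto $[s]$ and define the monotone network $f$ on $\{0,1\}^m\times\{0,1\}^r$ exactly as in the illustrative construction:
\[
f(x,y):=\begin{cases}
(\ONE,y)&\text{if }\wH(y)>r/2,\\
(h^{\phi(y)}(x),y)&\text{if }\wH(y)=r/2,\\
(\ZERO,y)&\text{if }\wH(y)<r/2.
\end{cases}
\]
As before the first and third cases guarantee that $f$ is monotone: increasing $y$ above weight $r/2$ sends the first block to $\ONE$ and freezes $y$, while each $h^{\phi(y)}$ is itself monotone (path networks are conjunctive, hence monotone), so comparability is preserved across the three regimes.

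The decisive step is the lower bound on $\lambda(f)$. Let $w$ be any word fixing $f$; I restrict attention to the letters of $w$ lying in the first block $[m]$ and argue that this restriction must contain every $\tilde\pi^k$ as a subsequence. For each $k$, pick a control state $y$ with $\wH(y)=r/2$ and $\phi(y)=k$; starting from $(\ZERO,y)$, the $y$-block is frozen at weight $r/2$ throughout the action of $w$ (only first-block letters can change anything, and they leave $y$ untouched), so the dynamics on the first block is exactly that of the path network $h^k$. By Lemma~\ref{lem:acyclic3} a word fixes $h^k$ iff it is $m$-complete in the relevant sense, and in particular the subsequence of $w$ on $[m]$ must contain the topological sort $\tilde\pi^k$ of $h^k$'s interaction graph (Lemma~\ref{lem:acyclic2}). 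Hence $w_{[m]}$ contains all $s$ permutations $\tilde\pi^1,\dots,\tilde\pi^s$, so $|w|\ge|w_{[m]}|\ge(\tfrac1e-\varepsilon)m^2=(\tfrac1e-\varepsilon)(1-o(1))n^2$, and after absorbing the $o(1)$ loss into a slightly smaller $\varepsilon'$ we conclude $\lambda(f)\ge(\tfrac1e-\varepsilon)n^2$ for $n$ large.

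The main obstacle I anticipate is the bookkeeping around the weight of $y$ during the action of $w$, and more subtly the claim that fixing $f$ from the chosen start state genuinely forces $m$-completeness on the first block. One must check that intermediate first-block configurations never drive $f$ out of the $\wH(y)=r/2$ regime (they cannot, since $f$ only updates one coordinate at a time and never changes $y$), and that the definition of ``fixing'' — requiring $f^w(x)$ to be a fixed point for \emph{every} start state — lets me freely select the convenient start $(\ZERO,y)$ for each $k$. A secondary technical point is verifying monotonicity of $f$ across the weight-$r/2$ boundary when the two compared states $x\le x'$ sit in different regimes, which is handled by the clamping to $\ONE$ and $\ZERO$; this is routine but should be stated explicitly. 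Finally I must confirm $r=o(n)$ carefully from the entropy estimate for $\binom{r}{r/2}$ so that $m=(1-o(1))n$, which is what converts the $m^2$ bound into an $n^2$ bound.
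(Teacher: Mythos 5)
Your proposal is correct and follows essentially the same route as the paper: pack the path networks corresponding to the subexponential permutation family of Theorem~\ref{th:resupersequence} into a monotone network via a Hamming-weight control block, check monotonicity using the clamping to $\ONE$ and $\ZERO$, and get the lower bound by noting that a word fixing $f$ must fix every $h^k$, hence (Lemma~\ref{lem:acyclic2}) contain every $\pi^k$ as a subsequence. The only caveat is a quantifier inversion: you define $n=m+r(m)$ as a function of $m$, which yields the bound only for integers $n$ in the range of that map, whereas the theorem requires all sufficiently large $n$; the paper avoids this by defining $m$ from $n$ (the largest $m$ with $m^{m^{1/2+\delta}}\le\binom{n-m}{\lfloor (n-m)/2\rfloor}$, $\delta=\varepsilon/2$), and your argument is repaired the same way, since central binomial coefficients are monotone and the estimate $m=(1-o(1))n$ survives.
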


\begin{proof}
Let $\varepsilon>0$, let $n$ be a positive integer, and let $m$ be the largest integer such that 
\begin{equation}\label{eq:1}
m^{m^{\frac{1}{2}+\delta}} \le \binom{n-m}{\lfloor\frac{n-m}{2}\rfloor}, 
\end{equation}
with $\delta=\varepsilon/2$. Then $m=(1-o(1))n$ and thus if $n$ is large enough, then 
\[
m^2> n^2(1-\delta)
\]
and, according to Theorem~\ref{th:resupersequence}, there exists a collection $\pi^1,\dots,\pi^p$ of $p\leq m^{m^{\frac{1}{2}+\delta}}$ permutations of $[m]$ such that any word containing all these permutations as subsequences is of length at least $(\frac{1}{e}-\delta)m^2$. Let us regard these $p$ permutations as $m$-vertex paths, and let $h^1,\dots,h^p$ be the corresponding path networks. In other words, writing $\pi^k=\pi^k_1\pi^k_2\dots\pi^k_n$, we have, for all $x\in\{0,1\}^m$,   
\[
h^k_{\pi^k_1}(x)=1\text{ and }h^k_{\pi^k_l}(x)=x_{\pi^k_{l-1}}\text{ for all }1<l\leq m.
\] 
According to Lemma \ref{lem:acyclic2}, if a word $w$ fixes all the networks $h^k$ then it contains all the permutations $\pi^k$ as subsequences, and thus $|w|\geq (\frac{1}{e}-\delta)m^2$. 

Let $r:=n-m$. Then according to \eqref{eq:1} there is a surjection $\phi$ from the set of states in $\{0,1\}^r$ with Hamming weight $\lfloor\frac{r}{2}\rfloor$ to $[p]$. By identifying $\{0,1\}^n$ with $\{0,1\}^m\times\{0,1\}^r$, we then define the $n$-component network $f$ as follows:
$$
	f(x,y) = \begin{cases}
		(\ONE,y) &\text{if } \wH(y) > \lfloor r/2\rfloor,\\
		(h^{\phi(y)}(x),y)&\text{if }\wH(y) = \lfloor r/2\rfloor,\\
		(\ZERO,y) &\text{if }\wH(y) < \lfloor r/2\rfloor.
	\end{cases}
$$

Let us check that $f$ is monotone. Suppose $(x,y)\leq (x',y')$. If  $\wH(y)<\wH(y')$ we easily check that $f(x,y)\leq f(x',y')$. Otherwise, we have $y=y'$ and thus $\phi(y)=\phi(y')=k$ for some $k\in [p]$, and, since $h^k$ is monotone, we obtain $f(x,y)=(h^k(x),y)\leq (h^k(x'),y')=f(x',y')$. 

Let $w$ be any shortest word fixing $f$. Then it is clear that fixes $h^k$ for all $k\in [p]$. Thus 
$$
	|w| \ge (\frac{1}{e}-\delta)m^2 >(\frac{1}{e}-\delta)(1-\delta)n^2> \left( \frac{1}{e} - \varepsilon \right) n^2.
$$
\end{proof}

A similar argument works for increasing networks as well.

\begin{proof}[Proof of Theorem \ref{th:lambda(f)increasing}]
We use the same setup as the proof of Theorem \ref{th:lambda(f)}, excepted that the $m$-component networks $h^k$ and the $n$-component network $f$ are defined as follows. Let $k\in [p]$. We set $y^{k,0}:=\ZERO$ and  $y^{k,l} := y^{k,l-1} + e_{\pi^k_l}$ for all $l\in [m]$. We then define the $m$-component increasing network $h^k$ by
	$$
	h^k(x) := \begin{cases}
	y^{k,l+1} &\text{if } x = y^{k,l}\text{ and $1\leq l<m$},\\
	x &\text{otherwise}.
	\end{cases}
	$$
Then, as already said in the proof of Proposition~\ref{pro:increasing2}, a word fixes $h^k$ if and only if it contains $\pi^k$ as subsequence. Thus if a word $w$ fixes all the networks $h^k$ then it contains all the permutations $\pi^k$ as subsequences, and thus $|w|\geq (\frac{1}{e}-\delta)m^2$.  

Next, we define the $n$-component network $f$ as follows:
$$
	f(x,y) = \begin{cases}
		(\ONE,y) &\text{if } \wH(y) > \lfloor r/2\rfloor,\\
		(h^{\phi(y)}(x),y)&\text{if }\wH(y) = \lfloor r/2\rfloor,\\
		(\ONE,y) &\text{if }\wH(y) < \lfloor r/2\rfloor.
	\end{cases}
$$
We easily check that $f$ is increasing and that $w$ fixes $f$ if and only if it fixes $h^k$ for all $k\in [p]$. We then deduce as above that any word fixing $f$ is of length at least $\left(\frac{1}{e} - \varepsilon \right) n^2$.
\end{proof}

\subsection{Cubic word fixing all monotone networks} \label{sec:all_monotone}

What about the fixing length $\lambda_M(n)$ of the whole family $F_M(n)$ of $n$-component monotone networks? We have shown that some members have quadratic fixing length, namely $(\frac{1}{e}-\varepsilon) n^2$, and thus, obviously, $\lambda_M(n)\geq (\frac{1}{e}-\varepsilon)n^2$. But we can say something slightly better: we have shown that the family of path networks $F_P(n)$ has fixing length $\lambda(n)$, and since $F_P(n)\subseteq F_M(n)$ we obtain:
\[
\lambda_M(n)\geq \lambda(n). 
\]
We have no better lower-bound. Maybe the family of $n$-component conjunctive networks whose interactions graphs are disjoint union of cycles has fixing length greater than $\lambda(n)$ (this family can be equivalently defined as the set of monotone isometries of the hypercube $Q_n$). 

Concerning upper-bounds, we show below that $\lambda_M(n)$ is at most cubic, and this is the best upper-bound we have on the maximum fixing length of a member of $F_M(n)$. For that we construct inductively a word $W^n$ of cubic length that fixes $F_M(n)$.

\begin{definition}[Fixing word for monotone networks]
Let $W^1:=1$ and, for $n\geq 1$, let 
\[
W^{n+1} := W^n, n+1, \omega^n,
\]
where $\omega^n$ is a shortest $n$-complete word (of length $\lambda(n)$).
\end{definition}

\begin{example}
\begin{align*}
	W^2 &= 1, 2, 1\\
	W^3 &= 121, 3, 121\\
	W^4 &= 1213121, 4, 1213121\\
	W^5 &= 1213121 4 1213121, 5, 123412314213.
\end{align*}
\end{example}

\begin{theorem}\label{thm:monotone_universal}
The word $W^n$ fixes $F_M(n)$ for every $n\geq 1$. Therefore,  
\[
\lambda_M(n)\leq n+\sum_{i=1}^{n-1} \lambda(i)\leq \frac{n^3}{3}-\frac{3n^2}{2}+\frac{37n}{6}.
\]
\end{theorem}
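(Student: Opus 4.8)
The plan is to prove by induction on $n$ that $W^n$ fixes $F_M(n)$; the length bounds then follow by a routine computation. From the recursion $W^{n+1} = W^n, (n{+}1), \omega^n$ and $|\omega^n| = \lambda(n)$ one gets $|W^{n+1}| = |W^n| + 1 + \lambda(n)$, which telescopes to $|W^n| = n + \sum_{i=1}^{n-1}\lambda(i)$, giving the first inequality once fixing is established. For the explicit cubic bound I would substitute $\lambda(i) \le i^2 - 2i + 4$ (valid for all $i \ge 1$, cited earlier) and evaluate $\sum_{i=1}^{n-1}(i^2 - 2i + 4)$ with the standard power-sum formulas; this yields $\frac{n^3}{3} - \frac{3n^2}{2} + \frac{37n}{6}$ (in fact with a further $-4$ to spare).

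The substance is the induction. The base case $n=1$ is immediate: a monotone $1$-network has $f_1$ equal to a constant or the identity, all of which are idempotent, so the single letter $1$ lands on a fixed point. For the inductive step, write each state as $(x', b)$ with $x' \in \{0,1\}^n$ and $b \in \{0,1\}$ the last coordinate, and observe that $W^{n+1}$ acts in three phases: first $W^n$ (letters in $[n]$ only), then the single letter $n{+}1$, then the $n$-complete word $\omega^n$ (again letters in $[n]$ only). For a fixed value $b$ of the last coordinate, the action of a letter $i \in [n]$ on $(x', b)$ updates the first $n$ coordinates exactly as the letter $i$ acts on $x'$ in the $n$-component network $g^b$ defined by $g^b_i(x') := f_i(x', b)$, and leaves $b$ untouched; moreover $g^b$ is monotone because $f$ is. Hence by the induction hypothesis the first phase drives the first $n$ coordinates to a fixed point $z'$ of $g^b$, leaving the state at $(z', b)$ with $f_i(z', b) = z'_i$ for all $i \in [n]$. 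The only coordinate possibly not yet fixed is $n{+}1$, and the second phase updates it once, to $c := f_{n+1}(z', b)$.

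The crux is the third phase: after changing the last coordinate, the first $n$ coordinates may no longer be fixed for $g^c$, and I must show that $\omega^n$ re-fixes them without ever needing to touch coordinate $n{+}1$ again. Suppose $c \ne b$, say $b = 0$, $c = 1$ (the case $b=1$, $c=0$ is symmetric). Monotonicity of $f$ gives $g^0(x') \le g^1(x')$ pointwise, so $g^1(z') \ge g^0(z') = z'$, i.e.\ the current state $z'$ lies below its image under $g^1$. I would then record the auxiliary fact that for a monotone network $h$ with $z' \le h(z')$, every asynchronous trajectory from $z'$ is non-decreasing and keeps satisfying $x^q \le h(x^q)$ (a one-line induction maintaining this invariant). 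This is exactly the hypothesis $h^u(z') \le h^{uv}(z')$ of Lemma~\ref{lem:increasing1}, and since $\omega^n$ is $n$-complete it contains all permutations of $\{i : z'_i = 0\}$ (any permutation of a subset extends to one of $[n]$); hence the third phase reaches a fixed point $w' \ge z'$ of $g^1$, giving $f_i(w', 1) = w'_i$ for all $i \in [n]$. Finally the same monotonicity pins coordinate $n{+}1$: from $f_{n+1}(z', 0) = c = 1$ we get $f_{n+1}(z', 1) \ge 1$, hence $f_{n+1}(w', 1) \ge f_{n+1}(z', 1) = 1$, so coordinate $n{+}1$ is also fixed and $(w', 1)$ is a genuine fixed point of $f$. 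When $c = b$ the state $(z', b)$ is already fixed and $\omega^n$ changes nothing. Since the initial state was arbitrary, $W^{n+1}$ fixes $f$, completing the induction.

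I expect the main obstacle to be precisely this last phase: justifying that a single update of coordinate $n{+}1$, followed by an $n$-complete word, suffices, rather than requiring a fresh round that also re-touches coordinate $n{+}1$. The resolution is the monotone-trajectory observation above, which simultaneously (i) guarantees the restricted network's trajectory is monotone so that Lemma~\ref{lem:increasing1} applies with $\omega^n$, and (ii) forces coordinate $n{+}1$ to stay put. Handling both the increasing direction ($b=0$, $c=1$) and the decreasing direction ($b=1$, $c=0$) symmetrically, using the two halves of Lemma~\ref{lem:increasing1}, is the only place where care is needed.
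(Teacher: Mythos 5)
Your proof is correct and follows essentially the same route as the paper's: induction on $n$, freezing the last coordinate to obtain a monotone restriction that $W^n$ fixes by the induction hypothesis, then combining the monotone-trajectory invariant (the paper's Lemma~\ref{lm:local_increasing_or_decreasing}) with Lemma~\ref{lem:increasing1} to show that the single update of the last coordinate followed by the $n$-complete word lands on a fixed point. The only cosmetic difference is that the paper applies Lemma~\ref{lem:increasing1} to the full network directly (observing that the zero-set of the current state avoids the last coordinate, so the complete word on the remaining letters suffices), whereas you apply it to the restricted network and pin the last coordinate by a separate monotonicity argument --- both are valid, and your length computation matches the stated bound.
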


This is a quantitative version of Theorem~\ref{thm:main3} stated in the introduction. The main idea is that, once the components $1$ to $n-1$ have been fixed, a monotone network behaves just like an increasing (or decreasing) network. Therefore, the network can be fixed in quadratic time from that point. 

\begin{lemma}\label{lm:local_increasing_or_decreasing}
Let $f$ be an $n$-component monotone network. If $x\leq f(x)$ then $f^u(x)\leq f^{uv}(x)$ for any words $u$ and $v$. Similarly, if $x\geq f(x)$ then $f^u(x)\geq f^{uv}(x)$ for any words $u$ and $v$.
\end{lemma}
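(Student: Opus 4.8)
The plan is to show that the property ``$y \le f(y)$'' is an invariant of the asynchronous dynamics: once it holds, it persists after any single update, and moreover each update can only increase the state. Since $f^{uv}(x) = f^v(f^u(x))$, proving the claim reduces to two things: that $z := f^u(x)$ still satisfies $z \le f(z)$ whenever $x \le f(x)$, and that from any $z$ with $z \le f(z)$, applying any word $v$ yields $z \le f^v(z)$.

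First I would isolate the single-letter step as the heart of the argument: if $y \le f(y)$, then for every $i \in [n]$ one has (i) $y \le f^i(y)$ and (ii) $f^i(y) \le f(f^i(y))$. Property (i) is immediate, since $f^i(y)$ agrees with $y$ outside component $i$, and on component $i$ we have $f^i(y)_i = f_i(y) \ge y_i$. For (ii), the key observation is that $y \le f^i(y)$ combined with monotonicity of $f$ gives $f(y) \le f(f^i(y))$; I would then verify $f^i(y) \le f(f^i(y))$ coordinatewise. On component $i$ this reads $f_i(y) \le f_i(f^i(y))$, which is exactly $f(y) \le f(f^i(y))$ restricted to that coordinate; on a component $j \ne i$ it reads $y_j \le f_j(f^i(y))$, which follows by chaining $y_j \le f_j(y) \le f_j(f^i(y))$, the first inequality from $y \le f(y)$ and the second from $f(y) \le f(f^i(y))$.

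With this step in hand, the rest is a straightforward double induction. Starting from $x$ with $x \le f(x)$ and applying the letters of $u$ one at a time, repeated use of the step shows that every intermediate state, and in particular $f^u(x)$, still satisfies the invariant. Then, writing $v = i_1 \dots i_k$ and setting $z_0 := f^u(x)$ and $z_\ell := f^{i_\ell}(z_{\ell-1})$, another application of part (i) at each stage gives $z_0 \le z_1 \le \dots \le z_k = f^{uv}(x)$, whence $f^u(x) \le f^{uv}(x)$. The dual statement, for $x \ge f(x)$, follows by the identical reasoning with all inequalities reversed and the invariant ``$y \ge f(y)$''.

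The only delicate point — the hard part — is verifying the single-letter step (ii) on the unchanged coordinates $j \ne i$; this is precisely where both hypotheses are genuinely used, namely the starting inequality $y \le f(y)$ to control $y_j$ and monotonicity to propagate it through $f$. Everything else is bookkeeping via induction on the lengths of $u$ and $v$.
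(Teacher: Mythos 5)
Your proof is correct and is essentially the paper's own argument: both hinge on the same single-update step (if $y\leq f(y)$ then $y\leq f^i(y)$ and, by monotonicity, $f^i(y)\leq f(f^i(y))$, verified coordinatewise exactly as you do), followed by iterating this along the word to obtain the increasing chain $x\leq f^{w_1}(x)\leq\dots\leq f^{w}(x)$ for $w=uv$. The only cosmetic difference is that you package the iteration as a double induction over $u$ and then $v$, whereas the paper runs a single chain along the whole word and reads off the inequality between positions $|u|$ and $|uv|$.
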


\begin{proof}
Suppose that $x\leq f(x)$ and let $i\in [n]$. Then $x\leq f^i(x)$ so $f^i_i(x)=f_i(x)\leq f_i(f^i(x))$ and $f^i_j(x)=x_j\leq f_j(x)\leq f_j(f^i(x))$ for all $j\neq i$. Thus $x\leq f(x)$ implies $f^i(x)\leq f(f^i(x))$ for every $i\in [n]$. We deduce that, for any word $w=w_1w_2\dots w_k$, $x\leq f^{w_1}(x)\leq f^{w_1w_2}(x)\leq \dots \leq f^{w_1w_2\dots w_k}(x)$, and this clearly implies the lemma.
\end{proof}


\begin{proof}[Proof of Theorem~\ref{thm:monotone_universal}]
The proof is by induction on $n$. This is clear for $n = 1$, so suppose it holds for $n-1$. Fix an initial state $x\in\{0,1\}^n$ and, for every $z\in\{0,1\}^n$, let $z_{-n}:=(z_1, \dots, z_{n-1})$ and $h(z_{-n}):= f(z_{-n},x_n)_{-n}$. Then $h$ is a monotone network with $n-1$ components. Let $y:=f^{W^{n-1}}(x)$. Since the letter $n$ does not appear in $W^{n-1}$, we have $y_{-n}=h^{W^{n-1}}(x_{-n})$ and thus, by induction hypothesis, $y_{-n}$ is a fixed point of $h$. Hence, 
\[
f(y)=f(y_{-n},y_n)=(h(y_{-n}),f_n(y))=(y_{-n},f_n(y)). 
\]
We deduce that either $y$ is a fixed point of $f$, and in that case $y=f^{n,\omega^{n-1}}(y)=f^{W^n}(x)$ so we are done, or $f(y)=y+e_n$. Suppose that $f(y)=y+e_n$ with $y_n=0$, and remark that $f(y)=f^n(y)$. Setting $y':=f(y)$ we have $y\leq y'$, thus $y'\leq f(y')$, and since $y'_n=1$, we deduce that $\omega^{n-1}$ contains all the permutations of $\{i:y'_i=0\}$. Hence, according to Lemma~\ref{lm:local_increasing_or_decreasing} and Lemma~\ref{lem:increasing1}, 
\[
f^{\omega^{n-1}}(y')=f^{\omega^{n-1}}(f^n(y))=f^{n,\omega^{n-1}}(y)=f^{W^n}(x)
\]
is a fixed point of $f$. If $f(y)=y+e_n$ with $y_n=1$ the proof is similar.
\end{proof}

\section{Refinements and extensions}\label{sec:extensions}

\subsection{Conjunctive networks}

We now determine the maximum fixing length over all $n$-component conjunctive networks. Clearly the maximum is equal to one if $n=1$ and to two if $n=2$. To settle the case $n\geq 3$ and characterize the extremal networks, we need additional definitions. Let $C_n$ denote the $n$-vertex cycle (there is an edge from $i$ to $i+1$ for all $1\leq i<n$, and an edge from $n$ to $1$). We denote by $C^\circ_n$ the graph obtained from $C_n$ by adding an edge $(i,i)$ for all $i\in [n]$; these additional edges are called \BF{loops}. A strong component in a graph $G$ is \BF{initial} if there is no edge from a vertex outside the component to a vertex inside the component. 

\begin{theorem}
For all $n\geq 3$ and $f\in F_C(n)$, 
\[
\lambda(f)\leq  2n-2,
\]
with equality if and only if the interaction graph of $f$ is isomorphic to $C^\circ_n$.
\end{theorem}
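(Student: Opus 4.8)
The plan is to reduce the whole statement to the case where the interaction graph $G$ is strongly connected, and to treat that case by a shortest-common-supersequence argument in which $C^\circ_n$ emerges as the unique worst case. Throughout I use that a conjunctive network is monotone, hence fixable, and that a state $y$ is a fixed point iff its zero set $Z(y)=\{i:y_i=0\}$ is \emph{closed}, meaning forward-closed along the edges of $G$ and such that every vertex of $Z(y)$ has an in-neighbour in $Z(y)$; in particular $\ONE$ is always a fixed point. First I would set up an induction on $n$ whose hypothesis is the bound $\lambda(f)\le\max(2k-2,k)$ for every $k$-vertex conjunctive network, allowing some coordinates to carry an extra constant input (a constant $1$ is vacuous, a constant $0$ only forces more zeros and so only helps). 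Given $G$ not strongly connected, I would peel off an initial strong component $Q$ of size $q<n$: since nothing outside $Q$ feeds into it, a shortest word fixing the standalone conjunctive network on $Q$ leaves $Q$ at a fixed point permanently, after which the remaining $n-q$ coordinates form a conjunctive network with constant inputs to which the induction hypothesis applies. Concatenating gives $\lambda(f)\le(2q-1)+(2(n-q)-2)=2n-3<2n-2$ (and the smaller value $2n-4$ when $q\ge 2$). Thus equality can occur only when $G$ is strongly connected, and the problem is reduced to that case.

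For strongly connected $G$ on $m\ge 3$ vertices, closedness forces the only fixed points to be $\ZERO$ and $\ONE$. I would then classify the zeros of a state: a zero whose zero set induces a directed cycle (in particular a zero sitting at a looped vertex) is \emph{permanent}, since erasing a zero at $i$ requires all in-neighbours of $i$ to be $1$; whereas a state whose zeros induce an acyclic subgraph can be driven to $\ONE$. A fixing word must therefore send every state carrying a permanent zero to $\ZERO$, and sending a single permanent zero at $v$ to $\ZERO$ forces the word to contain, as a subsequence, a sequence realising a directed $v$-path to every other vertex (along which the zero propagates). This turns $\lambda(f)$ into a shortest-common-supersequence problem for these propagation sequences, and I would build a fixing word of length at most $2m-2$ by choosing a spanning order adapted to $G$ and packing the propagation sequences into ``one sweep and almost another''.

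The extremal value is then computed on $C^\circ_n$. Here every vertex is looped, so every zero is permanent and any state with a zero must reach $\ZERO$; driving the single zero at $v$ to $\ZERO$ forces the cyclic arc $A_v:=(v+1)(v+2)\dots(v+n-1)$ (indices modulo $n$) as a subsequence, and conversely a word containing all $n$ arcs $A_1,\dots,A_n$ fixes the network. Hence $\lambda(C^\circ_n)$ equals the length of a shortest common supersequence of $A_1,\dots,A_n$. The word $1\,2\cdots n\,1\,2\cdots(n-2)$ of length $2n-2$ is such a supersequence, giving the upper bound. For the matching lower bound I would show that in any common supersequence $w$ at most two symbols occur exactly once: if $s$ occurs once, at position $p$, then the last-letter role of $A_{s+1}$ embeds all symbols other than $s,s+1$ before $p$ and the first-letter role of $A_{s-1}$ embeds all symbols other than $s,s-1$ after $p$, so every symbol outside $\{s-1,s,s+1\}$ occurs at least twice; thus any two once-occurring symbols are cyclically adjacent, and for $n\ge 4$ there can be at most two of them. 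Counting occurrences gives $|w|\ge 2n-2$ (the case $n=3$ being checked by hand), so $\lambda(C^\circ_n)=2n-2$.

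Finally, for the converse I would show that every strongly connected $G\ne C^\circ_n$ admits a fixing word of length $\le 2n-3$, by a case analysis on how $G$ deviates from the looped cycle: a missing loop makes the corresponding single zero removable and deletes one forced arc, while an extra edge (a chord, or a vertex of in- or out-degree $\ge 2$) creates a shorter directed route, allowing one propagation sequence to be shortened or merged with another. I expect the main obstacle to be exactly these two uniform claims for strongly connected graphs — the bound $2m-2$ for an arbitrary strongly connected $G$ (a genuine supersequence-of-propagation-paths optimisation, where a fixed-order double sweep does \emph{not} suffice, since directed paths may zig-zag against any linear order) and the strict gain to $2m-3$ whenever $G\ne C^\circ_m$ — because both require controlling how the propagation sequences overlap rather than merely counting updates.
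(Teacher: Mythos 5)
Your reduction to the strongly connected case is sound in spirit (and matches the paper's strategy of peeling strong components, though your claim that a constant-$0$ input ``only helps'' is left unproved and needs care, since a $0$ feeding into a component changes which local functions are genuine conjunctions), and your treatment of the extremal graph $C^\circ_n$ is complete and correct: the word $1\,2\cdots n\,1\,2\cdots(n-2)$ gives the upper bound, and your occurrence-counting lower bound (at most two symbols can occur exactly once, and they must be cyclically adjacent) is a valid alternative to the paper's argument, which instead tracks the letter whose first occurrence in $w$ is latest and observes that the forced propagation sequence starting with that letter cannot terminate before position $2n-2$.

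However, there is a genuine gap, and it is the heart of the theorem: you never prove the two uniform claims for strongly connected $G$, namely (i) $\lambda(f)\le 2n-2$ for \emph{every} strong $G$, and (ii) $\lambda(f)\le 2n-3$ whenever $G$ is strong and not isomorphic to $C^\circ_n$ --- you explicitly flag both as ``the main obstacle'' and offer only intentions (``one sweep and almost another'', ``a case analysis on how $G$ deviates from the looped cycle''). Without (i) the inequality of the theorem is unproved, and without (ii) the ``only if'' direction of the equality characterization is unproved. Moreover, the reformulation you would base these on is itself shaky: containing the propagation sequences of single permanent zeros is only a \emph{necessary} condition on a fixing word; for a general strong $G$ it does not handle states whose zero set is acyclic, or whose permanent zeros come from unlooped cycles, so $\lambda(f)$ is not simply a shortest-common-supersequence length (it is for $C^\circ_n$ only because every vertex has a loop, whence every state $x\neq\ONE$ satisfies $x\le\ONE+e_v$ for some looped $v$ and monotonicity applies). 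The paper closes exactly this hole with a construction your sketch lacks: take a spanning in-tree $S$ of $G$ with $\phi(G)$ leaves and a spanning out-tree $T$ with the same root, and use the word formed by a topological sort of $S$ with the leaves deleted followed by a topological sort of $T$ with the root deleted; after the in-sweep the value at the root determines everything (root $=1$ forces the whole state to be $\ONE$, by chasing zeros up the in-tree), and if the root is $0$ the out-sweep propagates the $0$ to $\ZERO$. This gives $\lambda(f)\le 2n-\phi(G)-1$, which yields (i); then (ii) follows by showing that a strong graph that is not a cycle with loops has a vertex of in-degree at least $2$, whose breadth-first in-tree has at least two leaves, while cycles with loops other than $C^\circ_n$ are handled by a cyclic double sweep saving $d(G)\ge 2$ updates. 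Your intuition that ``a fixed-order double sweep does not suffice'' is precisely why the two sweeps must be adapted to a pair of spanning trees; supplying that construction is what your proposal is missing.
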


\begin{proof}
Suppose that $n \ge 2$. Let $G$ be a graph on $[n]$, and let $f$ be the conjunctive network on $G$. A \BF{spanning in-tree} $S$ in $G$ rooted at $i$ is a spanning connected subgraph of $G$ such that all vertices $j \ne i$ have out-degree one in $S$, and $i$ has out-degree zero in $S$. A \BF{spanning out-tree} is defined similarly. It is clear that if $G$ is strong, then for any vertex $i$ there exists a spanning in-tree of $G$ rooted at $i$ (and similarly for out-trees). A vertex $l$ with in-degree zero in a spanning in-tree $S$ is referred to as a leaf of $S$. We denote the maximum number of leaves of a spanning in-tree of $G$ as $\phi(G)$. 
	
	We first prove the theorem when $G$ is strong. In that case, $f$ has exactly two fixed points: $\ZERO$ and~$\ONE$. 
	
	\begin{claim} \label{claim:h>2n-phi-1}
		If $G$ is strong, then $\lambda(f)\leq 2n - \phi(G) - 1$.
	\end{claim}
	
	\begin{proof}[Proof of Claim \ref{claim:h>2n-phi-1}]
Let $S$ be a spanning in-tree of $G$ with $\phi := \phi(G)$ leaves. Let $i_1i_2\dots i_n$ be a topological sort of $S$. The root of $S$ is thus $i_n$, and its leaves are $i_1\dots i_\phi$. Let $T$ be a spanning out-tree with the same root as $S$. Let $j_1j_2\dots j_n$ be a topological sort of $T$, so that its root is $j_1=i_n$. We claim that the word $w:=i_{\phi+1}\dots i_nj_2\dots j_n$ of length $2n - \phi - 1$ fixes $f$. Let $u:=i_{\phi+1}\dots i_n$ and $x\in\{0,1\}^n$. We set $x^\phi:=x$ and $x^k:=f^{i_k}(x^{k-1})$ for $\phi<k\leq n$. We claim that if $x^n_{i_n}=1$, then $f^u(x)=x^n=\ONE$. For otherwise, suppose $x^n_{i_n}=1$ and $x^n_{i_k}=0$ for some $\phi\leq k< n$. Let $i_{r_1}i_{r_2}\dots i_{r_p}$ be the path from $i_k=i_{r_1}$ to $i_{r_p}=i_n$ in $S$. This path follows the topological order, that is, $r_1< r_2<\dots < r_p$. Clearly, for all $1\leq q<p$ we have 
\[
x^n_{i_{r_q}}=0
~\Rightarrow~
x^{r_q}_{i_{r_q}}=0
~\Rightarrow~
x^{r_{q+1}-1}_{i_{r_q}}=0
~\Rightarrow~
x^{r_{q+1}}_{i_{r_{q+1}}}=0
~\Rightarrow~
x^n_{i_{r_{q+1}}}=0.
\]
Since $x^n_{i_k}=0$ we deduce that $x^n_{i_n}=0$, which is the desired contradiction. Hence, if $x^n_{i_n}=1$ then $f^u(x)=\ONE$ and thus $f^w(x)=\ONE$. Otherwise $x^n_{i_n}=0$ and it is easily shown by induction on $2\le k \le n$ that $f^{j_2\dots j_k}_{j_k}(x^n)=0$, thus $f^{j_2\dots j_n}(x^n)=f^w(x)=\ZERO$. 
	\end{proof}
	
We say that $G$ is a {\bf cycle with loops} if $G$ is isomorphic to a graph obtained from $C_n$ by adding some loops.

	\begin{claim} \label{claim:phi}
		If $G$ is strong and not a cycle with loops, then $\phi(G) \ge 2$ and hence $\lambda(f) \le 2n-3$.
	\end{claim} 
	
	\begin{proof}[Proof of Claim \ref{claim:phi}]
		Since adding loops to a graph maintains the value of $\phi$, without loss, suppose that $G$ has no loops. Since $G$ is strong but not a cycle, there exists a vertex $i$ in $G$ with in-degree $d \ge 2$. We can then construct a spanning in-tree rooted at $i$ with at least $d$ leaves as follows. For all $0 \le k<n$, let $U_k$ be the set of vertices $j$ such that $d_G(j,i)=k$ (i.e. $k$ is the minimum length of a path from $j$ to $i$ in $G$). Then $U_0$ only contains $i$, $U_1$ is the set of in-neighbors of $i$, and $U_0 \cup U_1 \cup \dots \cup U_{n-1} = [n]$. For any $j \in U_k$ with $1\leq k<n$, let $j'$ be any out-neighbor of $j$ in $U_{k-1}$. Then the edges $(j,j')$ for all $j\ne i$ form a spanning in-tree rooted at $i$ with at least $|U_1| = d\geq 2$ leaves.
	\end{proof}
	
	Suppose that $G$ is a cycle with loops, and let $L$ be the set of vertices with a loop. Given $l, l' \in L$, we say that $l'$ is the successor of $l$ if none of the internal vertices on the path from $l$ to $l'$ belong to $L$. The maximum distance in $G$ from a vertex in $L$ to its successor is denoted as $d(G)$. By convention, we let $d(G):= n$ if $|L| = 0$ or $|L| = 1$.
	
	\begin{claim} \label{claim:cycle_loops_upper}
		If $G$ is a cycle with loops, then $\lambda(f) \le 2n - d(G) - 1$. Therefore, if $G$ is not isomorphic to $C^\circ_n$, then $\lambda(f)\le 2n-3$.
	\end{claim}
	
	\begin{proof}[Proof of Claim \ref{claim:cycle_loops_upper}]
Without loss, we assume that $G$ is obtained from $C_n$ by adding some loops. Let us first settle the case where $|L| \le 1$. If $L$ is empty, then it is easy to see that $1,2,\dots ,n-1$ fixes $f$. If $L$ is a singleton we may assume, without loss, that $n$ is the only vertex with a loop, and then the same strategy works: $1,2,\dots ,n-1$ fixes $f$. Henceforth, we assume $|L| \ge 2$.
		Without loss, suppose that $n$ and $d:=d(G)$ both belong to $L$ and that $d$ is the successor of $n$. Then we claim that the word $w:= d+1, d+2, \dots,n,1,\dots, d, d+1, \dots, n-1$ fixes $f$. Let $x\in\{0,1\}^n$. Firstly, suppose $x_l = 0$ for some $l \in L$; we note that $d\leq l\leq n$. First of all, the value of $x_l$ will remain zero: $f^{d+1, \dots, l}_l(x) = 0$. Afterwards, the $0$ will propagate through the cycle: $f^{d+1, \dots, l, \dots, l-1}(x) = \ZERO$. Secondly, if $x_l = 1$ for all $l \in L$, then it is easy to show that $f^w(x)=\ONE$.
\end{proof}

The two previous claims show that if $G$ is strong then $\lambda(f)\leq 2n-2$, with a strict inequality when $G$ is not isomorphic to $C^\circ_n$. The lower bound below thus settles the strong case.

\begin{claim} \label{claim:cycle_loops_lower}
		If $G$ is isomorphic to $C^\circ_n$ then $\lambda(f)\ge 2n-2$.
\end{claim}
	
\begin{proof}[Proof of Claim \ref{claim:cycle_loops_lower}]
For all $1 \le i \le n$ and $0\leq k<n$, we denote by $i_k$ the vertex at distance $k$ from $i$ in $G$, and we denote by $x^{i,k}$ the state such that $x^{i,k}_j=0$ if and only if the distance between $i$ and $j$ is at most $k$. Thus $x^{i,0}=\ONE+e_i$ and $x^{i,n-1}=\ZERO$. Furthermore, for all $0\leq k<n-1$, 
\[
f(x^{i,k})=x^{i,k}+e_{i_{k+1}}=x^{i,k+1}.
\]
We deduce that if $w=w_1w_2\dots w_p$ fixes $f$, then $f^w(x^{i,0})=\ZERO$ and, necessarily, $i_1i_2\dots i_{n-1}$ is a subsequence of $w$ for all $i$. Let $i$ be the last index to appear in $w$, then $i=w_q$ for some $q \ge n$; then the word $i_1i_2\dots i_{n-1}$ begins in position $q$ of $w$ and does not end before position $q+n-2 \ge 2n-2$. Hence $\lambda(f)\ge 2n-2$. 
\end{proof}

It remains to settle the non-strong case. We first establish an upper-bound on $\lambda(f)$ that depends on the decomposition of $G$ in strong components. Let $\psi_1(G)$ be the number of initial strong components containing a single vertex without a loop, let $\psi_2(G)$ be the number of initial strong components containing a single vertex with a loop, let $\psi_3(G)$ be the number of initial strong components with at least two vertices, and let $\psi_4(G)$ be the number of non-initial strong components. 

	\begin{claim} \label{claim:psi}
$\lambda(f)\leq 2n-\psi_1(G)-2\psi_2(G)-2\psi_3(G)-\psi_4(G)$.
	\end{claim}
	
\begin{proof}[Proof of Claim \ref{claim:psi}]
Let $I_1, \dots, I_k$ denote the strong components of $G$ in the topological order, and let $n^l:=|I_l|$. We then consider a word $w^l$ that fixes the conjunctive network on $G[I_l]$.
		\begin{enumerate}
			\item If $I_l$ is an initial strong component containing a single vertex $i$ without a loop, then  $w^l:= i$; $w^l$ has length $2n^l - 1$.
			
			\item If $I_l$ is an initial strong component containing a single vertex with a loop, then $w^l$ is the empty word; $w^l$ has length $2 n^l - 2$.
			
			\item If $I_l$ is an initial strong component with at least two vertices we consider two cases. If $G[I_l]$ is not a cycle with loops, then $w^l$ is the word described in the proof of Claim \ref{claim:h>2n-phi-1}. If $G[I_l]$ is a cycle with loops, then $w^l$ is the word described in the proof of Claim \ref{claim:cycle_loops_upper}. In both cases, $w^l$ has length at most $2n^l - 2$.
			
			\item Otherwise, $I_l$ is a non-initial strong component. Let $S$ be a spanning in-tree of $G[I_l]$ and let $i_1i_2\dots i_n$ be a topological sort of $S$. The root of $S$ is thus $i_n$. Let $T$ be a spanning out-tree with the same root, and let $j_1j_2\dots j_n$ be a topological sort of $T$, so that $j_1=i_n$. Then $w^l:=i_1\dots i_nj_2\dots j_n$; $w^l$ has length $2n^l - 1$.
		\end{enumerate}
		Then, by induction on $l$, it is easily proved that $f^{w^1w^2 \dots w^l}$ fixes the conjunctive network on the subgraph of $G$ induced by $I_1\cup I_2\cup \dots\cup I_l$. Thus $w := w^1w^2 \dots w^k$ fixes $f$ and has length at most $2n-2\psi_2(G)-2\psi_3(G)-\psi_1(G)-\psi_4(G)$. 
	\end{proof}

We can finally prove that $\lambda(f) \le 2n-3$ if $G$ is not strong. 
	
	
	\begin{claim} \label{claim:problematic}
		If $G$ is not strong and $n\geq 3$, then $\lambda(f) \le 2n-3$.
	\end{claim}
	
	\begin{proof}[Proof of Claim \ref{claim:problematic}]
Suppose first that $\psi_4(G)=0$ (then $G$ is the disjoint union of strong graphs). If $\psi_2(G)+\psi_3(G)\geq 2$ then $\lambda(f)\leq 2n-4$, and if $\psi_2(G)+\psi_3(G)=1$ then $\psi_1(G)\geq 1$, since $G$ is not strong, and thus $\lambda(f)\leq n-3$. Finally, if $\psi_2(G)+\psi_3(G)=0$ then $\psi_1(G)\geq 3$ since $n\geq 3$, and thus  $\lambda(f)\leq n-3$. Suppose now that $\psi_4(G)\geq 1$. If $\psi_1(G)\geq 2$ or $\psi_2(G)\geq 1$ or $\psi_3(G)\geq 1$ or $\psi_4(G)\geq 2$ then $\lambda(f)\leq 2n-3$. So assume that $\psi_1(G)=\psi_4(G)=1$ and $\psi_2(G)=\psi_3(G)=0$. This means that $G$ is connected, has a unique initial strong component containing a single vertex without a loop, and has a unique non-initial strong component, with at least two vertices, since $n\geq 3$. Suppose, without loss, that $n$ is the vertex of the initial strong component, and let $x\in\B^n$. Since $f_n$ is the empty conjunction, we have $f_n(x) = 1$. Let $h$ be the conjunctive network on the (strong) graph $H$ obtained from $G$ by removing vertex $n$. Then for any word $u$ we have $f^{n,u}(x)=(h^u(x_1,\dots,x_{n-1}),1)$. Thus, let $u$ be the word of length at most $2(n-1) - 2$ fixing $h$ from the proof of Claim \ref{claim:h>2n-phi-1} (if $H$ is not a cycle with loops) or Claim \ref{claim:cycle_loops_upper} (otherwise). Then $w= n,u$ is a word of length at most $2n-3$ fixing $f$.  
\end{proof}
This completes the proof of the theorem.	\end{proof}

\begin{remark}
We can strengthen the upper bound for specific graphs. In particular, if $G$ is undirected and connected, then there are lower bounds on the maximum number of leaves of a spanning tree for $G$ (see \cite{BK12} for instance).
\end{remark}

\subsection{Monotone networks with a given interaction graph}

We now refine Theorem \ref{thm:monotone_universal} for $F_M(G)$, the family of monotone networks whose interaction graph is contained in a graph $G$. Recall that the \BF{transversal number} of $G$ is the minimum size of a subset $I$ of vertices in $G$ such that $G\setminus I$ is acyclic. The main result is that, for fixed transversal number, the fixing length of $F_M(G)$ is linear in the number of vertices. The statement needs additional definitions. 

\begin{definition}[$(i,\alpha)$-complete words]
For $i\geq 0$ and $\alpha\geq 0$, a word on $[\alpha+i]$ is \BF{$(i,\alpha)$-complete} if it contains, as subsequences, all the permutations $j_1,\dots,j_{\alpha+i}$ of $[\alpha+i]$ such that, for all $1\leq \ell< \alpha+i$, if $j_{\ell},j_{\ell+1}\in [\alpha]$ then $j_\ell<j_{\ell+1}$. We denote by $\lambda(i,\alpha)$ the length of a shortest $(i,\alpha)$-complete word.
\end{definition}

Thus $\lambda(0,\alpha)=\alpha$. Furthermore, for $i>0$, we have $\lambda(i,\alpha)\leq \lambda(\alpha+i)$, with equality if and only if $\alpha\in\B$. In a graph $G$, a \BF{$1$-feedback vertex set} is a set of vertices $I$ such that all the cycles of $G\setminus I$ are loops (i.e. cycles of length one). The \BF{$1$-transversal number} of $G$ is the minimum size of a $1$-feedback vertex set. Clearly, if $G$ is an $n$-vertex graph with transversal number $\tau$ and $1$-transversal number $\tau_1$, then $\tau_1\leq\tau$ and $\tau_1<n$. 

The following is a quantitative version of Theorem~\ref{thm:main4} stated in the introduction. 

\begin{theorem}\label{thm:tau_1}
Let $G$ be a graph on $[n]$ with $1$-transversal number $\tau_1$. We have 
\[
	\lambda_M(G)\leq n+\sum_{i=1}^{\tau_1} \lambda(i-1,n-\tau_1)\leq \left(\frac{\tau_1^2}{2}+\frac{3\tau_1}{2}+1\right)n.
\]
\end{theorem}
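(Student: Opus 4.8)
The plan is to generalize the inductive construction of $W^n$ from Theorem~\ref{thm:monotone_universal} by handling the $\tau_1$ vertices of a minimum $1$-feedback vertex set separately from the remaining $n-\tau_1$ vertices, which induce a graph whose only cycles are loops. Let $I=\{v_1,\dots,v_{\tau_1}\}$ be a minimum $1$-feedback vertex set of $G$, so $G\setminus I$ has no cycles except loops. The idea is to mimic the proof of Theorem~\ref{thm:monotone_universal}: there we peeled off component $n$ one at a time using a letter $n+1$ followed by an $n$-complete word; here I would peel off the feedback vertices $v_1,\dots,v_{\tau_1}$ one at a time, but each peeling step must now fix a monotone network on the remaining graph, which is acyclic up to loops rather than genuinely acyclic. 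This is exactly what motivates the $(i,\alpha)$-complete words: after introducing the $i$th feedback vertex, the word must handle all permutations of the active vertex set in which the $\alpha=n-\tau_1$ ``acyclic'' vertices appear in a fixed topological order relative to each other, while the $i$ feedback vertices may be interleaved arbitrarily.

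First I would set up the induction on the number $i$ of feedback vertices incorporated, defining a word of the shape
\[
W := u_0,\, v_1,\, \omega^{(1)},\, v_2,\, \omega^{(2)},\, \dots,\, v_{\tau_1},\, \omega^{(\tau_1)},
\]
where $u_0$ is a topological sort of $G\setminus I$ (the loops do not obstruct a topological sort of the non-loop edges), and $\omega^{(i)}$ is a shortest $(i,n-\tau_1)$-complete word on the alphabet consisting of the $n-\tau_1$ acyclic vertices together with $v_1,\dots,v_i$. The key structural fact, analogous to Lemma~\ref{lm:local_increasing_or_decreasing}, is that once the currently active feedback vertices have been ``stabilized,'' the restriction of $f$ to the acyclic-plus-loop part behaves monotonically and increasingly (or decreasingly) on each coordinate, so that after applying $u_0$ and each $\omega^{(i)}$ the state climbs (or descends) monotonically along the updates. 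I would then invoke Lemma~\ref{lem:increasing1} to conclude that an $(i,n-\tau_1)$-complete word suffices to drive the restricted dynamics to a fixed point, because such a word contains every permutation of the zero-set (or one-set) of the relevant coordinates subject only to the topological-order constraint on the acyclic vertices, which is precisely the constraint that Lemma~\ref{lem:acyclic1} shows is enough for the acyclic part.

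The main obstacle, and the step I expect to require the most care, is verifying the inductive invariant: after executing $u_0, v_1, \omega^{(1)}, \dots, v_{i-1}, \omega^{(i-1)}$, all coordinates of $G\setminus\{v_1,\dots,v_{i-1}\}$ are fixed, and then introducing $v_i$ perturbs at most the one coordinate $v_i$ (since the other feedback vertices are the only possible sources of new cyclic dependence), reducing the situation to an increasing/decreasing monotone network on the remaining $n-\tau_1+i-1$ active acyclic-plus-loop coordinates. Making precise the claim that a single letter $v_i$ creates a configuration on which the dynamics is monotone and increasing (or decreasing), exactly as in the $f(y)=y+e_n$ dichotomy of the proof of Theorem~\ref{thm:monotone_universal}, and then checking that the constraint defining $(i,n-\tau_1)$-completeness matches both the topological-order requirement on the acyclic vertices and the free interleaving of the feedback vertices, is the technical heart of the argument. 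Finally, the length bound is a routine computation: $\lambda(i,n-\tau_1)\leq \lambda(n-\tau_1)+ $ contributions of order $(n-\tau_1)$ per feedback vertex gives $\lambda(i-1,n-\tau_1)\leq i(n-\tau_1)+O(n)$, and summing over $i$ from $1$ to $\tau_1$ yields the stated $\left(\frac{\tau_1^2}{2}+\frac{3\tau_1}{2}+1\right)n$ bound, so the polynomial arithmetic is the easy part and I would defer it to the end.
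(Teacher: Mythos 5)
Your word has exactly the same shape as the paper's (a topological sort of $G\setminus I$, then each feedback vertex followed by a constrained-complete word), and your inductive scheme and the dichotomy ``either the current state is already a fixed point, or only the newly introduced coordinate $v_i$ is unsatisfied, after which Lemma~\ref{lm:local_increasing_or_decreasing} makes the dynamics increasing or decreasing'' are also the paper's. But there is a genuine gap precisely at the step you defer as ``the technical heart'': you propose to close it by invoking Lemma~\ref{lem:increasing1}, and that lemma does not apply. Lemma~\ref{lem:increasing1} requires the word to contain \emph{all} permutations of $\{j : x_j=0\}$, whereas an $(i,\alpha)$-complete word contains, by design, only the permutations in which consecutive acyclic vertices appear in increasing topological order; the whole point of the theorem is to get away with this much smaller set, so the statement you need is strictly stronger than Lemma~\ref{lem:increasing1}. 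The paper supplies the two missing arguments. First, a reachability argument: writing $j_1=v_i,j_2,\dots,j_d$ for the coordinates that flip while executing $w^i$ and $j_{d+1}$ for a hypothetical coordinate left unsatisfied, each $j_k$ with $k\geq 2$ becomes unsatisfied only after some earlier $j_\ell$ flips with $(j_\ell,j_k)$ an edge of $G$, so all of $j_1,\dots,j_{d+1}$ lie in the set $R_{v_i}$ of vertices reachable from $v_i$; this is what legitimizes restricting attention to enumerations of $R_{v_i}\setminus\{v_i\}$. Second, an exchange argument: if two consecutive acyclic coordinates satisfy $j_\ell>j_{\ell+1}$, then $G$ has no edge from $j_\ell$ to $j_{\ell+1}$, and monotonicity gives $f^{\cdots j_{\ell+1}j_\ell\cdots}(y)\geq f^{\cdots j_\ell j_{\ell+1}\cdots}(y)$; repeating such swaps reorders the flip sequence into a constrained enumeration $s_1\dots s_{d+1}$, which \emph{is} a subsequence of $w^i$, and comparing the two runs forces $f_{j_{d+1}}(y^p)\leq y^p_{j_{d+1}}$, a contradiction. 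Without the exchange argument the restriction to constrained permutations is unjustified, and without reachability you cannot control which coordinates the needed permutations involve.

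Three smaller points. Your appeal to Lemma~\ref{lem:acyclic1} for the ``acyclic part'' is also not sound, since $G\setminus I$ may have loops and that lemma concerns genuinely acyclic graphs; in the paper the loops are absorbed by the exchange argument (one never swaps a vertex with itself). Your inductive invariant is misstated: the processed coordinates are not ``fixed'' once and for all (they change again when later feedback vertices are introduced); the correct invariant, as in the paper, is that the prefix fixes the family $F_M(G_i)$ of monotone networks on the processed subgraph, i.e.\ the processed coordinates form a fixed point of the network relativized to the frozen values of the unprocessed ones. Finally, two quantitative slips: the paper uses $(i-1,\alpha)$-complete words, not $(i,\alpha)$-complete ones, because under increasing dynamics $v_i$ flips at most once and it is the first letter of $w^i$ (your version still meets the final bound $(\tau_1^2/2+3\tau_1/2+1)n$ but not the stated intermediate sum); and your estimate $\lambda(i-1,n-\tau_1)\leq i(n-\tau_1)+O(n)$ is false when $\tau_1=\Theta(n)$, as it drops an $(i-1)^2$ term that can be $\Theta(in)$ --- the correct computation uses $\lambda(i-1,\alpha)\leq (i-1)^2+(i-1)\alpha+\alpha\leq in$.
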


\begin{remark}
Let $K_n$ be the complete directed graph on $[n]$ (with $n^2$ edges). Since the $1$-transversal number of $K_n$ is $n-1$, we have the following, which proves that Theorem~\ref{thm:tau_1} indeed contains Theorem~\ref{thm:monotone_universal}: 
\[
\lambda_M(n)=\lambda_M(K_n)\leq n+\sum_{i=1}^{n-1} \lambda(i-1,1)=n+\sum_{i=1}^{n-1} \lambda(i).
\]
\end{remark}

\begin{proof}[Proof of Theorem~\ref{thm:tau_1}]
Let $G$ be a graph on $[n]$ with $1$-transversal number $\tau_1$ and let $\alpha=n-\tau_1$. Let $]\alpha,n]=\{\alpha+1,\dots,n\}$. Without loss, we assume that $]\alpha,n]$ is a $1$-feedback vertex set. We also assume that $12\dots\alpha$ is the topological order of $G[\{1,\dots,\alpha\}]$; this order exists, since all the cycles of $G[\{1,\dots,\alpha\}]$ have length one.

For all $1\leq i\leq n$, let $R_i$ be the set of vertices reachable from $i$ in $G[\{1,\dots,i\}]$. Thus $R_i=\{i\}$ if $i\leq\alpha$, and $R_i\subseteq [i]$ otherwise. Let $P_i$ be the set of enumerations $j_1j_2\dots j_k$ of $R_i\setminus\{i\}$ such that, for all $1\leq \ell<k$, if $j_\ell,j_{\ell+1}\in [\alpha]$ then $j_\ell<j_{\ell+1}$. Let $\omega^i$ be a shortest word containing, as subsequences, all the enumerations contained in $P_i$. Let $w^i:=i,\omega^i$ and 
\[
	W:=w^1,\dots,w^n.
\]

If $i\in [\alpha]$, then $R_i=\{i\}$, thus $\omega^i=\epsilon$. Furthermore, if $i\in ]\alpha,n]$, then $R_i\subseteq [i]$ and we deduce that $|\omega^i|\leq \lambda(i-\alpha-1,\alpha)$. Thus 
\[
	|W|\leq n+\sum_{i=1}^{\tau_1} \lambda(i-1,n-\tau_1).
\]
Let us now prove that $W$ fixes $F_M(G)$. For all $i\in [n]$, let 
\[
	W^i:=w^1,\dots,w^i
	\quad\text{and}\quad
	G_i:=G[\{1,\dots,i\}].
\]
We prove, by induction on $i$, that $W^i$ fixes $F_M(G_i)$. This is obvious for $i=1$. Assume that $i\geq 2$. Let $f\in F_M(G_i)$ and $x\in\B^i$. We write $x=(x_{-i}, x_i)$ and set 
\[
	f'(x_{-i}) := f(x_{-i}, x_i)_{-i}.
\]
In this way, $f'\in F_M(G_{i-1})$. Let 
\[
	y := f^{W^i}(x).
\]

Since $y_{-i} = f'^{W^{i-1}}(x_{-i})$, by induction hypothesis, $y_{-i}$ is a fixed point of~$f'$. We deduce that either $y$ is a fixed point of $f$, and in that case 
\[
	f^{W^i}(x)=f^{w^i}(f^{W^{i-1}}(x))=f^{w^i}(y)=y
\]
is a fixed point of $f$, and we are done, or $f(y)=y+e_i$.

So it remains to suppose that $f(y)=y+e_i$ and to prove that $f^{w^i}(y)$ is a fixed point. We consider the case where $y\leq f(y)$, the other case being similar. Let 
\[
	y^0:=y
	\quad\text{and}\quad
	y^k:=f^{w^i_1w^i_2\dots w^i_k}(y)
\] 
for all $1\leq k \leq p$, with $p=|w^i|$. According to Lemma~\ref{lm:local_increasing_or_decreasing} we have 
\[
	y^0\leq y^1\leq\dots\leq y^k\leq f(y^k). 
\]

Let us prove that $y^p=f^{w^i}(y)$ is a fixed point of $f$. Let $j_1j_2 \dots j_d$ be the ordered sequence of coordinates that turned from $0$ to $1$ during the sequence $y^0,y^1,\dots,y^p$. In this way, $d$ is the Hamming distance between $y^0$ and $y^p$, and $j_1=i=w^i_1$. Furthermore, 
\[
	f^{j_1j_2\dots j_d}(y) = y^p.
\]

Suppose, for the sake of contradiction, that $f_j(y^p) \neq y^p_j$ for some $1\leq j\leq i$. Since $y^p\leq f(y^p)$, we must have 
\[
y^p_j<f_j(y^p).
\]
Thus $y^k_j=0$ for all $0\leq k\leq p$. Hence, $j$ does not appear in the sequence $j_1 j_2\dots j_d$. Let
\[
j_{d+1}:=j,
\]
and let us prove that 
\begin{equation}\label{eq:R_i}
	\{j_1,\dots,j_d,j_{d+1}\}\subseteq R_i.
\end{equation}
Since $j_1=i$ we have $j_1\in R_i$. We now prove $j_k\in R_i$ with $k\neq 1$. Let $y^q$ be the smallest index $0\leq q\leq p$ such that $y^q_{j_k}<f_{j_k}(y^q)$. Since $k\neq 1$, $j_k\neq i$, and since $f(y)=y+e_i$, we deduce that $q>1$. Then, by the choice of $q$, we have $y^{q-1}_{j_k}=f_{j_k}(y^{q-1})$ and thus $y^{q-1}_{j_k}=y^q_{j_k}$. Hence, $f_{j_k}(y^{q-1})<f_{j_k}(y^q)$. Thus $G$ has an edge from $w_{q-1}$ to $j_k$, since $w_{q-1}$ is the unique component that differs between $y^{q-1}$ and $y^q$. Clearly, $w_{q-1}=j_\ell$ for some $1\leq \ell<k$. Thus, we have proved that for all $j_k$ with $1<k\leq d+1$, there exists $1\leq \ell <k$ such that $j_\ell j_k$ is an edge of $G$. We deduce that all the $j_k$ with $1<k\leq d+1$ are reachable from $j_1=i$. This proves \eqref{eq:R_i}. 

Furthermore, for all $1\leq \ell\leq d$, if $j_\ell,j_{\ell+1}\in [\alpha]$ and $j_\ell>j_{\ell+1}$, then 
\[
f^{j_1j_2\dots j_\ell j_{\ell+1}\dots j_{d+1}}(y)\leq f^{j_1j_2\dots j_{\ell+1}j_\ell \dots j_{d+1}}(y)
\]
since $G$ has no edge from $j_\ell$ to $j_{\ell+1}$. Thus, by applying such switches several times, we can reorder the sequence $j_1 j_2\dots j_{d+1}$ into a sequence $s_1s_2\dots s_{d+1}$ such that
\[
f^{s_1 s_2\dots s_d s_{d+1}}(y)\geq f^{j_1j_2\dots j_dj_{d+1}}(y)
\]
and such that, for all $1\leq \ell\leq d$, if $s_\ell,s_{\ell+1}\in [\alpha]$ then $s_\ell<s_{\ell+1}$. In this way, $s_1=j_1=i$, and $s_2,\dots,s_{d+1}$ is in $P_i$. Hence, by definition, $s_2\dots s_{d+1}$ is a subsequence of $\omega^i$, and thus $s_1s_2\dots s_{d+1}$ is a subsequence of $w^i$. Therefore,  
\[
	y^p_j=y^p_{j_{d+1}}= f_{j_{d+1}}^{\omega^i}(y) \geq f_{j_{d+1}}^{s_1s_2\dots s_d s_{d+1}}(y) \geq  f_{j_{d+1}}^{j_1j_2\dots j_d j_{d+1}}(y) = f_{j_{d+1}}(y^p)=f_j(x^p),
\]
a contradiction. Thus $W^i$ fixes $f$, and thus the whole family $F_M(G_i)$. 

Therefore, $W$ fixes $F_M(G)$ and it remains to prove that $|W|\leq (\frac{\tau_1^2}{2}+\frac{3\tau_1}{2}+1)n$. This follows from the proposition below and an easy computation. 
\end{proof}

\begin{proposition} 
For all $i\geq 0$ and $\alpha\geq 0$ we have $\lambda(i,\alpha)\leq i^2+i\alpha+\alpha$.
\end{proposition}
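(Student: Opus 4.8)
The plan is to give an explicit construction of an $(i,\alpha)$-complete word whose length is exactly $i^2+i\alpha+\alpha$. The first step is to understand the structure of the permutations that must be captured as subsequences. Suppose $j_1\dots j_{\alpha+i}$ is a permutation of $[\alpha+i]$ satisfying the defining constraint. The $i$ ``large'' symbols of $\{\alpha+1,\dots,\alpha+i\}$ occur in some order $\ell_1,\dots,\ell_i$, and they cut the permutation into $i+1$ maximal runs $B_0,B_1,\dots,B_i$ of ``small'' symbols from $[\alpha]$, one run per gap. Within each such run every consecutive pair lies in $[\alpha]$, so the constraint forces the run to be strictly increasing. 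Hence each admissible permutation is exactly of the form
\[
B_0\,\ell_1\,B_1\,\ell_2\cdots\ell_i\,B_i,
\]
where $\ell_1\dots\ell_i$ is an arbitrary ordering of the large symbols and $B_0,\dots,B_i$ are (possibly empty) increasing sequences forming a partition of $[\alpha]$.

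This structural description dictates the construction. Let $\sigma=12\dots\alpha$ and let $g=(\alpha+1)(\alpha+2)\cdots(\alpha+i)$ be the words listing the small and the large symbols in increasing order. I would set
\[
W := \sigma\, g\, \sigma\, g\, \sigma\,\cdots\, g\, \sigma,
\]
that is, $i+1$ copies of $\sigma$ separated by $i$ copies of $g$. Since $|\sigma|=\alpha$ and $|g|=i$, its length is $(i+1)\alpha + i\cdot i = i^2+i\alpha+\alpha$, as desired.

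It then remains to check that $W$ contains every admissible permutation $B_0\,\ell_1\,B_1\cdots\ell_i\,B_i$ as a subsequence. I would read off $B_k$ from the $(k+1)$-st copy of $\sigma$: since $B_k\subseteq[\alpha]$ is increasing, it is a subsequence of $\sigma$. I would read off $\ell_k$ from the $k$-th copy of $g$, which contains \emph{all} large symbols and in particular $\ell_k$. These choices respect the left-to-right order in $W$, namely $B_0$ (first $\sigma$), then $\ell_1$ (first $g$), then $B_1$ (second $\sigma$), then $\ell_2$ (second $g$), and so on through $\ell_i$ (last $g$) and $B_i$ (last $\sigma$); hence the whole permutation embeds into $W$, and $W$ is $(i,\alpha)$-complete.

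The argument is elementary, so I do not expect a genuine obstacle beyond getting the structural characterization of admissible permutations exactly right and verifying that the positional order of the embedding is consistent. The one point deserving slight care is that each large symbol $\ell_k$, \emph{independently of the permutation}, is routed to the $k$-th block $g$; this is precisely why supplying all $i$ large symbols in each of the $i$ gaps (rather than a single symbol) is both necessary for completeness and exactly responsible for the $i^2$ term. The base case $i=0$ degenerates to $W=\sigma$ of length $\alpha=\lambda(0,\alpha)$, in agreement with the stated bound.
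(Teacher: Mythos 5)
Your proof is correct and takes essentially the same approach as the paper: since $12\dots(\alpha+i)=\sigma g$, your word $\sigma g\,\sigma g\cdots g\,\sigma$ is letter-for-letter the same as the paper's word $i\cdot(12\dots\beta),12\dots\alpha$, and your block-by-block embedding (small runs into copies of $\sigma$, large symbols into copies of $g$) coincides with the paper's splitting of an admissible permutation at its large symbols. The only difference is presentational: you make the structural characterization $B_0\,\ell_1\,B_1\cdots\ell_i\,B_i$ of admissible permutations explicit, which the paper leaves implicit.
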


\begin{proof}
Let $\beta:=\alpha+i$ and consider the word $w:=i\cdot (12\dots \beta),12 \dots\alpha$, resulting from the concatenation of $i$ copies of $12\dots\beta$ and the addition of the suffix $12\dots\alpha$. Let $u=j_1j_2\dots j_{\beta}$ be a permutation of $[\beta]$ such that, for all $1\leq \ell<\beta$, if $j_\ell,j_{\ell+1}\in [\alpha]$ then $j_\ell<j_{\ell+1}$. We will prove that $w$ is $(i,\alpha)$-complete and, for that, it is sufficient to prove that $u$ is contained in $w$. Let $j_{k_1}\dots j_{k_i}$ be the longuest subsequence of $u$ with letters in $[\beta]\setminus [\alpha]$. Then,
\[
	\begin{array}{rcl}
	j_1\dots j_{k_1}&\text{is a subsequence of}&(1\dots\beta)=w_1\dots w_{\beta}\\
	j_{k_1+1}\dots j_{k_2}&\text{is a subsequence of}&(1\dots \beta)=w_{\beta+1}\dots w_{2\beta},\\
	&\vdots&\\
	j_{k_{i-1}+1} \dots j_{k_i}&\text{is a subsequence of}&(1\dots \beta)=w_{(i-1)\beta+1}\dots w_{i\beta},\text{ and}\\
	j_{k_i+1}\dots j_\beta&\text{is a subsequence of}&(1\dots \alpha)=w_{i\beta+1}\dots w_{i\beta+\alpha}.\\
	\end{array}
\]
Thus $u$ is a subsequence of $w$. Since $|w|=i^2+i\alpha+\alpha$, this proves the proposition. 
\end{proof}

\subsection{Balanced networks}

We now consider a family of networks (namely, balanced networks) which is more general than monotone networks. Those are defined by their signed interaction graph, hence we review basic definitions and properties of signed graphs first.

A \BF{signed graph} is a couple $(G,\sigma)$ where $G$ is a graph, and $\sigma:E\to\{-1,0,1\}$ is an edge labelling function, that gives a (positive, negative or null) sign to each edge of $G$. The \BF{sign} of a cycle in $(G,\sigma)$ is the product of the signs of its edges, and $(G,\sigma)$ is \BF{balanced} if all the cycles are positive. The \BF{signed interaction graph} of an $n$-component network $f$ is the signed graph $(G,\sigma)$ where $G$ is the interaction graph of $f$ and where $\sigma$ is defined for each edge of $G$ from $j$ to $i$ as follows: 
\[
\sigma(ji):=
\left\{
\begin{array}{rl}
1&\text{if }f_i(x)\leq f_i(x+e_j)\text{ for all }x\in\B^n\text{ with }x_j=0,\\
-1&\text{if }f_i(x)\geq f_i(x+e_j)\text{ for all }x\in\B^n\text{ with }x_j=0,\\
0&\text{otherwise.}
\end{array}
\right.
\]

\begin{definition}[Balanced networks]
An network is \BF{balanced} if its signed interaction graph is balanced. The family of $n$-component balanced networks is denoted $F_B(n)$. 
\end{definition}

Clearly, a network is monotone if and only if all the edges of its signed interaction graph are positive. Thus every monotone network is balanced. Conversely, a balanced network can be ``decomposed'' into monotone networks by considering the decomposition of its interaction graph into strong components, as formally described below. 

Given $z\in\B^n$, the \BF{$z$-switch} of $f$ is the $n$-component network $f'$ defined by 
\[
f'(x)=f(x+z)+z
\]
for all $x\in\B^n$. For instance, the $\ONE$-switch of $f$ is the dual of $f$. If $f'$ is the $z$-switch of $f$, then $f$ and $f'$ have the same interaction graph $G$, but their signed interaction graph $(G,\sigma)$ and $(G,\sigma')$ may differ, since $\sigma'(ji)=\sigma(ji)$ for all edge $ji$ with $z_j=z_i$ but $\sigma'(ji)=-\sigma(ji)$ for all edge $ji$ with $z_j\neq z_i$. Clearly, if $f'$ is the $z$-switch of $f$, then $f$ is the $z$-switch of $f'$, and we then say that $f$ and $f'$ are \BF{switch-equivalent}. 

\begin{proposition}[\cite{MRRS13}]
Let $f$ be a network with a strong interaction graph. Then $f$ is balanced if and only if $f$ is switch-equivalent to a monotone network.
\end{proposition}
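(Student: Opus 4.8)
The plan is to prove the two implications separately, disposing of the easier ``monotone $\Rightarrow$ balanced'' direction first and reserving the real work for the converse. Throughout I write $G$ for the (strong) interaction graph of $f$ and use the fact, recalled just above, that a network is monotone precisely when every edge of its signed interaction graph is positive, together with the switching relation already stated: if $f'$ is the $z$-switch of $f$, then $\sigma'(ji)=\sigma(ji)$ when $z_j=z_i$ and $\sigma'(ji)=-\sigma(ji)$ when $z_j\neq z_i$.

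For the forward direction, suppose $f$ is switch-equivalent to a monotone network $f'$, say $f$ is the $z$-switch of $f'$. Since $f'$ is monotone, every edge of $(G,\sigma')$ is positive, so every cycle of $(G,\sigma')$ is positive. I then compare signs cycle by cycle: for any cycle $C$ of $G$, its sign in $(G,\sigma)$ equals its sign in $(G,\sigma')$ times $(-1)^c$, where $c$ counts the edges of $C$ joining a vertex with $z=0$ to a vertex with $z=1$. Traversing $C$ and returning to the start, $z$ must flip an even number of times, so $c$ is even and the two signs agree; hence every cycle of $(G,\sigma)$ is positive and $f$ is balanced. Strong connectivity is not needed here.

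For the converse, suppose $f$ is balanced; I will build a single $z\in\B^n$ whose switch is monotone. First I observe that $(G,\sigma)$ has no null edges: since $G$ is strong, every edge $ji$ lies on a simple directed cycle (complete it with a directed $i\to j$ path), and a cycle through a null edge would have sign $0\neq 1$, contradicting balance. So every edge has sign $\pm1$ and the sign of any directed path or closed walk is a well-defined element of $\{-1,1\}$. Fix a root $r$ and, for each vertex $v$, choose a directed path $P$ from $r$ to $v$ (which exists by strong connectivity) and set $z_v:=0$ if $\mathrm{sign}(P)=1$ and $z_v:=1$ otherwise. The point to verify is independence of the chosen path: given two paths $P,P'$ from $r$ to $v$ and a directed path $Q$ from $v$ back to $r$, the closed walks $PQ$ and $P'Q$ decompose into directed cycles, all positive by balance, whence $\mathrm{sign}(P)=\mathrm{sign}(Q)=\mathrm{sign}(P')$.

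Finally I check that the $z$-switch $f'$ of $f$ is monotone. For any edge $ji$, take a path $P$ from $r$ to $j$ and extend it by $ji$ to a path from $r$ to $i$; taking signs gives $(-1)^{z_i}=(-1)^{z_j}\sigma(ji)$, so $\sigma(ji)=(-1)^{z_i+z_j}$, i.e.\ $\sigma(ji)=1$ exactly when $z_j=z_i$ and $\sigma(ji)=-1$ exactly when $z_j\neq z_i$. The switching relation then forces $\sigma'(ji)=1$ in both cases, so every edge of $(G,\sigma')$ is positive and $f'$ is monotone. The hard part is exactly this converse: the directedness of $G$ blocks a naive undirected two-colouring, and one must instead lean on the decomposition of closed directed walks into positive directed cycles (a directed analogue of Harary's balance theorem) to obtain a consistent labelling $z$, with strong connectivity doing double duty by ruling out null edges and guaranteeing the paths used to define and check $z$.
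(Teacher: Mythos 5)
Your proof is correct, but there is nothing in the paper to compare it to: the proposition is stated with a citation to \cite{MRRS13} and is never proved in this paper, so your argument fills in what the authors treat as a black box. What you give is the standard switching argument, i.e.\ the directed analogue of Harary's balance theorem: the forward direction is the parity observation that a cycle crosses the boundary between $\{v : z_v=0\}$ and $\{v : z_v=1\}$ an even number of times; the converse uses strong connectivity twice, once to exclude null edges (every edge lies on a directed cycle, and a cycle through a null edge would have sign $0$, contradicting balance) and once to define the switching vector $z$ by path signs from a root, with well-definedness obtained by decomposing a closed directed walk into edge-disjoint directed cycles, each positive by balance. Two tiny repairs you should make. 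First, when checking $\sigma(ji)=(-1)^{z_i+z_j}$ you ``extend'' a path from $r$ to $j$ by the edge $ji$ and call the result a path; it may revisit $i$ and hence be only a walk. This costs nothing: your well-definedness argument never uses that $P$ and $P'$ are simple, so state it as ``every directed walk from $r$ to $v$ has sign $(-1)^{z_v}$'' and the edge verification goes through verbatim. Second, the chain $\mathrm{sign}(P)=\mathrm{sign}(Q)=\mathrm{sign}(P')$ deserves its one-line justification, namely $\mathrm{sign}(P)\,\mathrm{sign}(Q)=1=\mathrm{sign}(P')\,\mathrm{sign}(Q)$ with all signs in $\{-1,1\}$. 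Neither point is a gap; the proof is sound.
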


The proposition above have immediate consequences on the existence of short words fixing the family of balanced networks. Clearly, if $f$ and $f'$ are switch-equivalent, then any word fixing $f$ fixes $f'$ as well. Therefore, let $W^n$ be a word fixing $F_M(n)$ and consider $n \cdot W^n$ (the word $W^n$ repeated $n$ times). Let $f \in F_B(n)$ and denote the strong components of its interaction graph as $I_1, \dots, I_k$ ($k \le n$). Since $f$ restricted to each strong component is switch-equivalent to a monotone network, $W^n$ fixes each strong component individually, and thus $l\cdot W^n$ fixes the first $l$ strong components. In particular, $n \cdot W^n$ fixes $f$. Thus, by Theorem \ref{thm:monotone_universal}, there exists (for sufficiently large $n$) a word of length at most $n^4/3$ fixing $F_B(n)$.

The following theorem refines (and gives a formal proof of) the result above. More precisely, let $n = 3q + r$ with $0 \le r < 3$, let $s$ be the word $s := 12 \dots n$ and let $W^n$ be any word fixing $F_M(n)$ of minimal length. Then define the word 
\[
\tilde W^n := q \cdot (s s W^n), r \cdot s
\]
of length $q(2n+\lambda_M(n))+rn$. 

\begin{theorem} \label{thm:balanced_universal}
The word $\tilde W^n$ fixes $F_B(n)$ for every $n\geq 1$. Therefore, 
	$$
	\lambda_M(n) \le \lambda_B(n) \le \frac{n}{3} \lambda_M(n) + n^2.
	$$
\end{theorem}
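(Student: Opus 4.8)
The lower bound $\lambda_M(n)\le\lambda_B(n)$ is immediate, since $F_M(n)\subseteq F_B(n)$ and hence any word fixing the larger family fixes the smaller one. For the upper bound everything reduces to the first assertion, that $\tilde W^n$ fixes $F_B(n)$; the length estimate is then a one-line computation. Writing $\tilde W^n=q\cdot(ssW^n),r\cdot s$ with $3q+r=n$, its length is $q(2n+\lambda_M(n))+rn=q\lambda_M(n)+n(2q+r)=q\lambda_M(n)+n(n-q)$, and using $q=\lfloor n/3\rfloor\le n/3$ together with $n-q\le n$ this is at most $\frac n3\lambda_M(n)+n^2$.

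So the plan is to process the strong components $I_1,\dots,I_k$ of the interaction graph of a given $f\in F_B(n)$ in a fixed topological order, tracking the set of components that are \emph{settled}, meaning that all of their coordinates, together with those of all earlier components, sit at their final fixed-point values. Two observations drive the argument. First, once $I_1,\dots,I_l$ are settled they stay settled under any further updates: updating a vertex of a later component never changes a coordinate read by an earlier one (topological order), and updating a vertex inside a settled component leaves it fixed. Second, I would restrict $f$ to a single not-yet-settled component $I_{l+1}$ by freezing all earlier settled coordinates as constants; the resulting network $g$ on $I_{l+1}$ is again balanced and, because $I_{l+1}$ is strong, the proposition from \cite{MRRS13} makes it switch-equivalent to a monotone network. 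Since fixing words are invariant under switch-equivalence, it suffices to fix that monotone network.

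From this I would extract two facts. (L1) A single $W^n$ settles $I_{l+1}$: the restriction $g$ is fixed by $W^n$ restricted to $I_{l+1}$, because padding a monotone network on a subset $S$ with constants to an $n$-component monotone network and applying Theorem~\ref{thm:monotone_universal} shows that the restriction of $W^n$ to $S$ fixes $F_M(S)$ for every $S$; during the whole word the coordinates of $I_{l+1}$ evolve exactly as under this restriction, the earlier ones being frozen and the later ones irrelevant. (L2) One pass $s$ settles $I_{l+1}$ when it is a single vertex (using that a loop, if present, is positive since $f$ is balanced, so a single update reaches the fixed value), and $ss$ settles $I_{l+1}$ when $|I_{l+1}|\le 2$: the restriction of $ss$ to a pair $\{a,b\}$ is $(\min)(\max)(\min)(\max)$, which has the relabelled $W^2=121$ as a prefix, and since the remaining suffix letters only act on an already reached fixed point, it fixes the monotone two-vertex network and hence $g$.

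With (L1) and (L2) in hand, the core claim is that each block $ssW^n$ settles at least three new coordinates, unless fewer than three remain. A short case analysis on the size of the next component gives this: if $|I_{l+1}|\ge 3$ then $W^n$ alone suffices; if $|I_{l+1}|=2$ then $ss$ settles it and $W^n$ settles the following component; and if $|I_{l+1}|=1$ then the first pass settles it, after which either a second singleton is settled by the second pass or a component of size $\ge 2$ is settled by $W^n$. Hence after the $q$ blocks at least $3q=n-r$ coordinates are settled, leaving at most $r\le 2$ coordinates, forming at most two singletons or one pair, which the trailing $r\cdot s$ settles by (L2). I expect the main obstacle to be precisely the bookkeeping behind (L1)--(L2): one must argue carefully that updates to the other components neither disturb the already-settled part nor interfere with the evolution on $I_{l+1}$, so that the action of the long word $\tilde W^n$ on $f$ really decomposes, component by component, into the actions guaranteed by the monotone theory.
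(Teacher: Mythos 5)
Your proposal follows essentially the same route as the paper's proof: decompose the interaction graph into strong components, reduce each component to the monotone case via switch-equivalence, show that each block $ssW^n$ fixes at least three new vertices by a case analysis on the sizes of the next components (your (L2) pair argument via the subsequence $aba$ of $abab$, and your singleton passes, are exactly the paper's cases), and let the trailing $r\cdot s$ finish off the at most $r\le 2$ remaining vertices. The paper organizes this as an induction on $n$ rather than your iterative ``settled set'' bookkeeping, and your (L1) makes explicit a padding/restriction lemma that the paper uses only implicitly; the length computation is correct.

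There is, however, one genuine gap. You apply the proposition of \cite{MRRS13} to the network $g$ obtained by freezing the already-settled coordinates, claiming $g$ is switch-equivalent to a monotone network ``because $I_{l+1}$ is strong''. But that proposition requires the interaction graph \emph{of the network itself} to be strong, and freezing can delete edges: if $f_i(x)=x_j\wedge x_c$ with $c$ settled at $0$, the edge from $j$ to $i$ disappears in $g$, so the interaction graph of $g$ may be a proper, non-strong subgraph of $G[I_{l+1}]$. This matters because ``balanced $\Rightarrow$ switch-equivalent to monotone'' is genuinely false without strongness: the network $f_1=x_2\oplus x_3$, $f_2=f_3=0$ is vacuously balanced (no cycles), yet every switch of it is an XOR or XNOR in $x_2,x_3$ and hence never monotone. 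The step can be repaired, but it needs an argument you did not give: since $G[I_{l+1}]$ is strong, every one of its edges lies on a cycle, so balance of $f$ forces every edge of $(G[I_{l+1}],\sigma)$ to have sign $\pm1$; strongness together with positivity of all cycles then yields a switch $z\in\B^{I_{l+1}}$ making every edge of $(G[I_{l+1}],\sigma)$ positive (apply the cited proposition to any network realizing this signed graph, or argue directly via path-signs from a root); finally, the signed interaction graph of $g$ is a subgraph of $(G[I_{l+1}],\sigma)$ with identical signs---no null edges can appear---so the $z$-switch of $g$ has only positive edges and is therefore monotone. Note that the paper sidesteps this issue structurally: in its induction it re-decomposes the frozen network $h$ into strong components at each step and applies switch-equivalence only to an \emph{initial} strong component of the current network's own interaction graph, where no freezing is involved. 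With this patch (or with that restructuring), your argument is complete.
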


\begin{proof}
Let $n=3q+r$, with $0\leq r<3$, and let $X^n$ be a word fixing $F_M(n)$. We prove, more generally, that $\tilde X^n:= (q \cdot (s s X^n), r \cdot s)$ fixes $F_B(n)$. Let $f \in F_B(n)$ and let $G$ be the interaction graph of $f$. 

The main idea of the proof is that each factor $w:=ssX^n$ of $\tilde X^n$ fixes at least three new vertices of $G$. Therefore, $q \cdot w$ fixes at least $3q = n-r$ vertices, and finally $r \cdot s$ fixes the last $r$ vertices if need be. 

We formally proceed by induction on $n$. If $n=1$ then $s=1$ fixes $f$, and if $n=2$, it is easy to check that $ss=1212$ fixes $f$. So we assume that $n\geq 3$. We say that a prefix $u$ of $\tilde W^n$ fixes a set of vertices $I\subseteq [n]$ if, for any other prefix $v$ longer than $u$, we have  $f^u_i(x)= f^v_i(x)$ for all $i\in I$. We consider three cases, and in each case, we select a subset $I$ of vertices of size at least three fixed by $w$. 

\begin{enumerate}
\item
{\em $G$ has an initial strong component $I$ with at least three vertices.} Then let $I$ be this initial strong component, and let $g$ be the restriction of $f$ on $I$. Since $g$ is switch-equivalent to a monotone network, $X^n$ fixes $g$, and thus $w$ fixes $I$.  

\item 
{\em $G$ has an initial strong component with two vertices, say $I_1 = \{i,j\}$ with $i<j$.} Again, let $g$ be the restriction of $f$ on $I_1$. The occurrences of $i$ and $j$ in $ss = 12 \dots  n12 \dots, n$ are $ijij$, in that order; this contains $iji$, which fixes $g$. Therefore, $ss$ fixes $I_1$. Suppose, without loss, that $i=n-1$ and $j=n$, and let $h$ be the $(n-2)$-component network defined by 
\[
h(y):=(f_1(y,z)),\dots,f_{n-2}(y,z))
\quad\text{with}\quad
z:=(f^{ss}_{n-1}(x),f^{ss}_n(x))
\]
for all $y\in\B^{n-2}$. Then $h$ is balanced and, by a reasoning similar to the first case, $X^n$ fixes an initial strong component $I_2$ of the interaction graph of $h$. Thus, $w$ fixes $I:=I_1 \cup I_2$.

\item 
{\em All the initial strong components of $G$ have one vertex each.} Note that $s$ fixes all the initial strong components. Therefore, if there are three initial strong components $\{i_1\},\{i_2\},\{i_3\}$, then $s$ fixes $I:=\{i_1,i_2,i_3\}$ and we are done. If there are two initial strong components $\{i_1\},\{i_2\}$ then $s$ fixes $I_1:=\{i_1,i_2\}$ and again $X^n$ fixes a non-empty subset $I_2$ of vertices, as shown in the second case. Thus $w$ fixes $I:=I_1\cup I_2$. There is only one case left: $I_1 = \{i_1\}$ is the only initial strong component. We then consider an initial strong component $I_2$ of $G\setminus I_1$. If $|I_2| \ge 2$, then $s$ fixes $I_1$ and $X^n$ fixes $I_2$. Thus $w$ fixes $I:=I_1\cup I_2$ and we are done. If $I_2=\{i_2\}$, then $ss$ fixes $I_1\cup I_2$, and again $X^n$ fixes a non-empty subset of vertices $I_3$. Thus $w$ fixes $I:=I_1\cup I_2\cup I_3$ and we are done.
\end{enumerate}

Thus, in any case, there exists a subset $I$ of three vertices fixed by $w$. Suppose, without loss, that $I=\{n-2,n-1,n\}$. Then, let $h$ be the $(n-3)$-component network defined  by 
\[
h(y):=(f_1(y,z)),\dots,f_{n-3}(y,z))
\quad\text{with}\quad
z:=(f^w_{n-2}(x),f^w_{n-1}(x),f^w_n(x))
\]
for all $y\in\B^{n-3}$. Then $h$ is balanced, and thus, by induction, $\tilde X^{n-3}$ fixes $h$. Consequently, $w$ fixes $I$, and then $\tilde X^{n-3}$ fixes $[n]\setminus I$. Since $\tilde X^n=w,\tilde X^{n-3}$, we deduce that $\tilde X^n$ fixes $f$.
\end{proof}

\section{Conclusion}\label{sec:conclusion}

In this paper, we have considered the asynchronous automaton associated with a Boolean network and used it to introduce the family of fixable networks (which is huge by Theorem~\ref{thm:huge}). We have then introduced the fixing length $\lambda(f)$ of a fixable network $f$ and the fixing length $\lambda(\mathcal{F})$ of a family $\mathcal{F}$ of fixable networks. We have then identified several families with polynomial fixing lengths, using properties concerning complete words. Our results are summarised in Table \ref{table:summary}. A dash means that we did not find any nontrivial result for the given entry. 

Our main results concern the family $F_M(n)$ of $n$-component monotone networks. In particular, we have proved that the fixing length of $F_M(n)$ is at most cubic and that the maximum fixing length of network in $F_M(n)$ is at least quadratic. The main open question raised by these results is the following: {\em is there an asymptotic gap between the fixing length of $F_M(n)$ and the maximum fixing length of network in $F_M(n)$?} A positive answer is not obvious since, for instance, for the family $F_I(n)$ of $n$-component increasing networks, which is doubly exponential in $n$, we have proved the following: both the fixing length of $F_I(n)$ and the maximum fixing length of network in $F_I(n)$ are quadratic. 

There are some connections between fixability and synchronization, since a network with a unique fixed point is fixable if and only if its asynchronous automaton is synchronizing. It would be interesting to study synchronization in Boolean networks more specifically. In particular, it would be interesting to study the famous \v{C}ern\'y conjecture, stated in the general framework of deterministic finite automata, in the specific setting of Boolean networks, that is, for the class of asynchronous automata associated with Boolean networks. 

\begin{table}
\renewcommand{\arraystretch}{1.5}
\centering
\begin{tabular}{|l|l||l|l|}
	\hline
	Networks		& $\mathcal{F}$			& $\max_{f\in\mathcal{F}} \lambda(f)$	& $\lambda(\mathcal{F})$	\\
	\hline
	Acyclic 		& $F_A(n)$				& $= n$ 								& $= \lambda(n)\sim n^2$			\\
	Path			& $F_P(n)$				& $= n$									& $= \lambda(n)\sim n^2$ 			\\
	Increasing 		& $F_I(n)$				& $\ge (\frac{1}{e}-\varepsilon)n^2$	& $= \lambda(n)\sim n^2$ 			\\
	Monotone		& $F_M(n)$				& $\ge (\frac{1}{e}-\varepsilon)n^2$	& $\le \frac{1}{3} n^3$		\\
	Conjunctive		& $F_C(n)$				& $=2n-2$ 								& --						\\
	$G$-monotone	& $F_M(G)$				& --									& $\le 2\tau^2n+n$			\\
	Balanced		& $F_B(n)$				& --									& $\le \frac{1}{9} n^4$		\\
	\hline
\end{tabular}
\caption{Summary of results} \label{table:summary}
\end{table}

\paragraph{Acknowledgment} 
This work is partially supported by: FONDECYT project 1151265, Chile; CONICYT /  PIA project  AFB 170001, Chile; Labex UCN@Sophia, Universit\'e C\^ote d'Azur, France; CNRS project PICS06718; project STIC AmSud CoDANet 19-STIC-03 (Campus France 43478PD), France; and Young Researcher project ANR-18-CE40-0002-01 ``FANs'', France.

\bibliographystyle{plain}
\bibliography{BIB}

\end{document}